\theoremstyle{definition}
\newtheorem{Def}{Definition}[section]
\newtheorem{Thm}[Def]{Theorem}
\newtheorem{Lem}[Def]{Lemma}
\newtheorem{Prop}[Def]{Proposition}
\newtheorem{Rem}[Def]{Remark}
\newtheorem*{Claim}{Claim}
\newcommand{\C}{\mathbb{C}}
\newcommand{\g}{\mathfrak{g}}
\newcommand{\ct}{\mathfrak{t}}
\newcommand{\n}{\mathfrak{n}}
\newcommand{\be}{\mathfrak{b}}
\newcommand{\OO}{\mathcal{O}}
\newcommand{\Otil}{\widetilde{\mathcal{O}}}
\newcommand{\Mtil}{\widetilde{M}}
\newcommand{\hd}{\mathop{\mathtt{hd}}\nolimits}
\newcommand{\std}{\Delta}
\newcommand{\pstd}{\bar{\Delta}}
\newcommand{\pcstd}{\bar{\nabla}}
\newcommand{\homo}{\mathop{\mathrm{hom}}\nolimits}
\newcommand{\ext}{\mathop{\mathrm{ext}}\nolimits}
\newcommand{\Ext}{\mathop{\mathrm{Ext}}\nolimits}
\newcommand{\End}{\mathop{\mathrm{End}}\nolimits}
\newcommand{\Hom}{\mathop{\mathrm{Hom}}\nolimits}
\newcommand{\Ker}{\mathop{\mathrm{Ker}}\nolimits}
\newcommand{\Ima}{\mathop{\mathrm{Im}}\nolimits}
\newcommand{\rad}{\mathop{\mathtt{rad}}\nolimits}
\newcommand{\m}{\mathfrak{m}}
\newcommand{\gl}{\mathfrak{gl}}
\newcommand{\SG}{\mathfrak{S}}
\newcommand{\CC}{\mathbf{C}}
\newcommand{\Cc}{\mathcal{C}}
\newcommand{\e}{\epsilon}
\newcommand{\Z}{\mathbb{Z}}
\newcommand{\KP}{\mathrm{KP}}
\newcommand{\supp}{\mathop{\mathrm{supp}}\nolimits}
\newcommand{\F}{\mathscr{F}}
\newcommand{\E}{\mathscr{E}}
\newcommand{\T}{\mathscr{T}}
\newcommand{\Fun}{\mathop{\mathrm{Fun}}\nolimits}
\newcommand{\gdim}{\mathop{\mathtt{gdim}}\nolimits}
\newcommand{\gmod}{\text{-}\mathtt{gmod}}
\newcommand{\mof}{\text{-}\mathrm{mod}}
\newcommand{\kk}{\Bbbk}
\newcommand{\eend}{\mathop{\mathrm{end}}\nolimits}
\newcommand{\rrad}{\mathop{\mathrm{rad}}\nolimits}
\newcommand{\hhd}{\mathop{\mathrm{hd}}\nolimits}
\numberwithin{equation}{section}
\title{Tilting modules
of affine quasi-hereditary algebras}
\author{Ryo Fujita\thanks{Department of 
Mathematics, Kyoto University, 
Oiwake Kita-Shirakawa Sakyo Kyoto 
606-8502 JAPAN, 
\texttt{E-mail:rfujita@math.kyoto-u.ac.jp}}}
\begin{document}

\maketitle

\begin{abstract}
	We discuss 
	tilting modules of affine quasi-hereditary algebras.
	We present an existence theorem of
	indecomposable tilting modules 
	when the algebra has a large center
	and use it to 
	deduce a criterion for an exact functor between 
	two affine highest weight categories to
	give an equivalence.
	As an application, we prove that
	the Arakawa-Suzuki functor 
	(Arakawa-Suzuki, 1998) gives a
	fully faithful embedding of  a block of
	the deformed BGG category of $\gl_{m}$
	into the module category of
	a suitable completion of degenerate affine Hecke algebra
	of $GL_{n}$.
\end{abstract}

%\tableofcontents

\section{Introduction}  
	
	The notion of highest weight category
	and its ring theoretic counterpart,  
	quasi-hereditary algebra  
	introduced by Cline-Parshall-Scott \cite{CPS}
	enables us to study 
	representation theory 
	of algebras 
	of Lie theoretic origin
	in terms 
	of theory of
	Artin algebras.
	Ringel's influential work \cite{Ringel} 
	on tilting modules of quasi-hereditary algebras
	is one of the examples.
	In a highest weight category,
	there are two kinds of distinguished indecomposable
	modules called standard modules
	and costandard modules.
	In \cite{Ringel}, Ringel 
	presented tilting modules 
	which are characterized to be 
	filtered by both standard modules 
	and costandard modules.
	
	Recently,
	Kleshchev \cite{affine} defined the
	notion
	of affine highest weight category
	and affine quasi-hereditary algebra
	as a graded version of the definition 
	of Cline-Parshall-Scott \cite{CPS}.
	Examples 
	of affine highest weight categories
	include
	the graded module categories 
	of Khovanov-Lauda-Rouquier (KLR) algebras
	of finite Lie type (\cite{BKM}, \cite{Kato}),
	those of current Lie algebras etc. 
	There are two kinds of 
	counterparts to
	standard modules
	in an affine highest weight category
	called standard modules 
	(which are of infinite length in general)
	and 
	proper standard modules 
	(which are of finite length)
	respectively.
	The counterparts to costandard modules
	are called
	proper costandard modules (which are of finite length).
	
	In this paper, we discuss tilting modules 
	of affine quasi-hereditary algebras,
	which are defined to be filtered
	by both standard modules
	and proper costandard modules.
	Under some conditions on
	the center,
	we prove an existence 
	theorem
	of 
	indecomposable tilting modules
	and
	deduce a simple criterion
	for an exact functor between two 
	affine highest weight categories 
	to give an equivalence.
	
	For more detailed explanation, 
	let $H$ be an affine quasi-hereditary algebra
	over a field $\kk$.
	By definition, $H$ has a $\Z$-grading
	$H = \bigoplus_{n \in \Z}H_{n}$
	such that we have $\dim H_{n} < \infty$
	for any $n \in \Z$ and
	$H_{n} = 0$ for $n \ll 0$.
	Let
	$\{ \std(\pi) \mid \pi \in \Pi \}$
	be the set of standard modules of $H$,
	where $\Pi$ is a finite set 
	with a partial order $\leq$
	parameterizing simple modules of $H$.  
	For any finitely generated graded
	$H$-module $V$ with $\std$-filtration
	and $\pi \in \Pi$,
	we consider
	the number $(V:\std(\pi))$ 
	of appearance of some grading shift of
	$\std(\pi)$ as a subquotient,
	which is known to be finite.
	We consider the following property $(\spadesuit)$:
	
	\begin{itemize}
	\item[$(\spadesuit)$]
	There is a central subalgebra $Z \subset H_{\geq 0}$
	with $Z_{0} = \kk \cdot 1$ such that $H$		
	is finitely generated as a $Z$-module.
	 \end{itemize}
	 Note that this is equivalent to simply saying that
	 the algebra $H$ is a finitely generated module 
	 over its center. 
	 
	 Our main results are the followings: 
	 
	 \begin{Thm}[=Theorem \ref{Thm:tilt}] 
	 \label{Thm:main1}
	 Let $H$ be an affine quasi-hereditary algebra
	with property $(\spadesuit)$.
	Then for each $\pi \in \Pi$,
	there exists a unique indecomposable tilting module
	$T(\pi) $ such that
	$(T(\pi):\std(\pi))=1$
	and $(T(\pi):\std(\sigma))=0$ 
	for any $\sigma \not \leq \pi$.
	Moreover every indecomposable 
	tilting module is isomorphic to
	a grading shift of $T(\pi)$
	for some $\pi \in \Pi$.
	 \end{Thm}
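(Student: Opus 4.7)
My plan is to adapt Ringel's inductive construction of tilting modules to the affine setting, with property $(\spadesuit)$ invoked precisely where the classical argument uses finite-dimensionality.

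\emph{Uniqueness.} First, I would verify that $(\spadesuit)$ makes $H\gmod_{\mathrm{fg}}$ a Krull--Schmidt category: for any finitely generated graded $V$, the degree-zero part of $\End(V)$ is a finite-dimensional $\kk$-algebra, since $V$ is finitely generated over $Z$ and $Z_{0}=\kk$. Given two indecomposable tiltings $T,T'$ satisfying the stated multiplicity hypotheses, the Ext-orthogonality $\ext^{1}(\std(\sigma),\pcstd(\tau))=0$ together with $\Hom(\std(\sigma),\pcstd(\tau))=\delta_{\sigma\tau}\kk$ lets one lift the identity on the top $\pcstd(\pi)$-layer to morphisms $T\to T'$ and $T'\to T$ by induction along a $\pcstd$-filtration. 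Krull--Schmidt then forces these to be mutual inverses, so $T\cong T'$. The ``moreover'' assertion follows by the same split-off trick: for any indecomposable tilting $T$, take $\pi$ maximal with $(T:\std(\pi))\neq 0$, produce a nonzero morphism $T(\pi)\to T$, split off $T(\pi)$ as a direct summand, and conclude $T\cong T(\pi)$ (up to grading shift) from indecomposability.

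\emph{Existence.} Enumerate $\Pi_{<\pi}=\{\sigma_{1}>\sigma_{2}>\cdots>\sigma_{s}\}$ in a linear extension of $\leq$. Set $M^{(0)}:=\std(\pi)$, and recursively form the universal extension
\[
0 \to M^{(j-1)} \to M^{(j)} \to \bigoplus_{\ell}\std(\sigma_{j})\langle n_{j,\ell}\rangle \to 0
\]
whose extension class realizes a finite generating set (with appropriate grading shifts $n_{j,\ell}$) of $\ext^{1}(\std(\sigma_{j}),M^{(j-1)})$. Let $T(\pi):=M^{(s)}$. By construction $T(\pi)$ is $\std$-filtered, with $(T(\pi):\std(\pi))=1$ and $(T(\pi):\std(\sigma))\neq 0$ only for $\sigma\leq \pi$. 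Processing in decreasing order ensures that $\ext^{1}(\std(\sigma_{j}),T(\pi))=0$ for each $j$, using the orthogonality $\ext^{1}(\std(\sigma_{i}),\std(\sigma_{j}))=0$ for $i<j$ coming from the affine highest weight structure. The standard cohomological criterion then implies $T(\pi)$ is $\pcstd$-filtered, hence tilting.

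\emph{The main obstacle.} The subtle point — and the reason $(\spadesuit)$ is imposed — is that $\std(\pi)$ is of infinite length in the affine setting, so a priori $\ext^{1}(\std(\sigma_{j}),M^{(j-1)})$ may have infinite total graded dimension, which would force the above direct sum to be infinite and destroy the finite-multiplicity bookkeeping. Property $(\spadesuit)$ resolves this: since $H$ is module-finite over the Noetherian graded ring $Z$, Ext-groups between finitely generated $H$-modules inherit the structure of finitely generated graded $Z$-modules. A finite $Z$-generating set therefore provides a well-defined universal extension with finite index set. Establishing this $Z$-module finiteness of the relevant Ext-groups between standardly filtered modules, and verifying that the multiplicities $(M^{(j)}:\std(\sigma))$ remain finite at every step, is the technical heart of the proof.
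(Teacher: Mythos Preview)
Your approach is correct and essentially the same as the paper's: both adapt the Ringel--Donkin iterative extension construction, with property $(\spadesuit)$ invoked precisely to guarantee that each $\ext^{1}_{H}(\std(\sigma),M)$ is a finitely generated $Z$-module, so that extending by finitely many copies of $\std(\sigma)$ suffices. The paper packages the construction as a slightly more general embedding lemma (any $V\in\F_{\pi}(\std)$ embeds into some $T\in\T_{\pi}$), proved by a minimal-counterexample argument that adjoins one extension class at a time and tracks $\dim_{\kk}\hd\ext^{1}(\std(\rho),V)$, whereas you perform the full universal extension at each step; both arguments rest on the same $\ext^{1}$-vanishing from Lemma~\ref{Lem:ext}~(\ref{Lem:ext:std}). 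One small gap: your $M^{(s)}$ is tilting with the correct $\std$-multiplicities but need not be indecomposable; you should, as the paper does, pass to the indecomposable summand containing $\std(\pi)$, which your Krull--Schmidt step makes immediate.
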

	 
	 \begin{Thm}[=Theorem \ref{Thm:equiv}]
	 \label{Thm:main2}
	Suppose that an exact functor $\Phi$
	between the module categories over 
	affine quasi-hereditary algebras with
	property $(\spadesuit)$
	induces order-preserving bijections
	on
	standard modules
	and on proper costandard modules.
	Then $\Phi$ gives an equivalence
	of graded categories. 
	\end{Thm}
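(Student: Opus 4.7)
The plan is to match up the tilting modules of $H$ and $H'$ using the uniqueness part of Theorem~\ref{Thm:main1}, and then invoke Ringel duality for affine quasi-hereditary algebras to upgrade this to a full equivalence.

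First, exactness of $\Phi$ implies that applying $\Phi$ to any $\std$-filtration of a finitely generated graded $H$-module $M$ yields a $\std$-filtration of $\Phi M$; combined with the bijection on standards, this shows $\Phi$ preserves the class of $\std$-filtered modules and the multiplicities $(M:\std(\pi))$ under the bijection. The same holds for $\pcstd$-filtered modules, and hence for tilting modules. By the uniqueness part of Theorem~\ref{Thm:main1}, each indecomposable tilting must satisfy $\Phi T(\pi) \cong T(\pi')[d_{\pi}]$, where $\pi \mapsto \pi'$ is the bijection on $\Pi$ induced by $\Phi$ and $d_{\pi} \in \Z$ is some grading shift. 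A short argument using $L(\pi) = \hd\std(\pi) = \soc\pcstd(\pi)$ together with exactness of $\Phi$ ensures that the bijections on standards and on proper costandards coincide, so also $\Phi L(\pi) \cong L(\pi')[d_{\pi}]$ and the bijection is order-compatible.

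Next, I would combine the structural $\Ext$-orthogonality $\Ext^{i}_{H}(\std(\pi),\pcstd(\sigma)) = \delta_{i,0}\delta_{\pi,\sigma}\kk$ of an affine highest weight category with a d\'evissage on the $\std$-filtration of the source and the $\pcstd$-filtration of the target to express $\Hom(T_{1},T_{2})$ for any pair of tilting modules in purely combinatorial filtration data. Checking that the canonical composition $\std(\pi) \twoheadrightarrow L(\pi) \hookrightarrow \pcstd(\pi)$ is preserved (up to scalar) under $\Phi$ then yields an isomorphism $\End_{H}(T) \xrightarrow{\sim} \End_{H'}(\Phi T)$ for the characteristic tilting module $T := \bigoplus_{\pi \in \Pi} T(\pi)$.

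Finally, the conclusion should follow by Ringel duality: the functor $R\Hom_{H}(T,-)$ induces a derived equivalence between $H\gmod$ and the module category of the Ringel dual $R(H) := \End_{H}(T)^{\mathrm{op}}$, with a canonical identification $R(R(H)) \simeq H$. The isomorphism $R(H) \cong R(H')$ built above then produces the desired equivalence $H\gmod \simeq H'\gmod$ that agrees with $\Phi$. The hardest part will be formalizing Ringel duality cleanly in the affine context: $T$ is not a classical progenerator and $H$ may have infinite global dimension, so one must verify that $\End_{H}(T)$ is well-behaved and that double Ringel dualization recovers the original algebra. This is precisely where property $(\spadesuit)$, providing finiteness over a central subalgebra, becomes essential.
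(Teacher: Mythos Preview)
Your overall strategy aligns with the paper's: both proofs go through tilting modules, use the $\Ext$-orthogonality of standards and proper costandards to compare $\Hom$-spaces, and then pass to a derived/tilting picture. Two points deserve comment.

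First, a genuine gap: you assert $\Phi T(\pi)\cong T(\pi')[d_\pi]$ directly from the uniqueness part of Theorem~\ref{Thm:main1}, but uniqueness only applies to \emph{indecomposable} tilting modules. At that stage you only know that $\Phi T(\pi)$ is tilting with $(\Phi T(\pi):\std(\pi'))=1$ and no terms for $\sigma'\not\le\pi'$; a priori it could decompose as $T(\pi')\oplus\bigoplus_{\sigma<\pi}T(\sigma')^{a_\sigma}$. The paper handles this in the correct order: it first proves (essentially your d\'evissage, stated as Lemma~\ref{Lem:ff}) that $\eend_{\CC_1}(T_1(\pi))\cong\eend_{\CC_2}(\Phi T_1(\pi))$, and then uses locality of the left-hand side (Theorem~\ref{Thm:tilt}(\ref{Thm:tilt:local})) to force $a_\sigma=0$. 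Your argument can be repaired the same way, but the steps must be reordered.

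Second, your proposed endgame via Ringel duality is heavier than necessary, and your worry about infinite global dimension is unfounded: the paper's definition requires each $B_\pi$ to be a polynomial ring precisely so that $H$ has finite global dimension (Remark~\ref{Rem:gldim}). The paper exploits this directly. Rather than constructing $R(H)=\End_H(T)^{\mathrm{op}}$, proving the double-dual statement $R(R(H))\simeq H$ in the affine setting, and then checking that the resulting abstract equivalence agrees with $\Phi$, it proves a single lemma (Lemma~\ref{Lem:equiv}) that the natural functor $K^b(\T)\to D^b(H\gmod)$ is a triangulated equivalence, using finite global dimension and Happel's criterion. Once $\Phi$ restricts to an equivalence $\T_1\simeq\T_2$ of additive categories, one immediately obtains $K^b(\T_1)\simeq K^b(\T_2)$ and hence $D^b(\CC_1)\simeq D^b(\CC_2)$; since $\Phi$ is exact, this restricts to the standard hearts and gives $\CC_1\simeq\CC_2$. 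This route avoids entirely the delicate verification of affine Ringel duality that you flag as the hardest part.
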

	
	 For example, 
	 the property $(\spadesuit)$ is satisfied by
	 KLR algebras.
	 Therefore Theorem \ref{Thm:main1}
	 says that 
	 for each affine quasi-hereditary structure
	 of KLR algebra of finite Lie type,
	 there exists a complete collection of 
	 indecomposable tilting modules 
	 (in the sense of Ringel).
	 
	 As an application of Theorem \ref{Thm:main2}, 
	we prove that
	the Arakawa-Suzuki functor \cite{AS} gives a
	fully faithful embedding of a block of
	the deformed BGG category of $\gl_{m}$
	into the module category of
	a suitable completion of degenerate affine Hecke algebra
	$H_{n}$
	of $GL_{n}$.
	
	The organization of this paper is as follows: 
	Section \ref{affhwc} is preliminary and 
	a brief review 
	of affine highest weight categories 
	following Kleshchev \cite{affine}.
	After a homological characterization
	of tilting modules
	(\ref{char}), 
	we prove the existence theorem
	of indecomposable tilting modules 
	(= Theorem \ref{Thm:main1}) in   
	Subsection \ref{tilt}.
	Our argument is similar to
	Donkin's exposition \cite{Donkin}.
	Then we prove
	Theorem \ref{Thm:main2}
	in Subsection \ref{equiv}.
	In Section \ref{Completion},
	we consider affine highest weight categories
	and affine quasi-hereditary algebras in
	the setting of topologically complete algebras.
	We also discuss the completion of 
	(graded) affine quasi-hereditary algebras 
	with property $(\spadesuit)$.
	In Section \ref{application}, we apply 
	the results of Section \ref{tilting} and \ref{Completion}
	to 
	the 
	Arakawa-Suzuki functor \cite{AS}, \cite{Suzuki}.
	We discuss an affine quasi-hereditary structure of a
	central completion of the
	degenerate affine Hecke algebra $H_{n}$ (\ref{dAHA})
	and the affine highest weight structure of the
	deformed BGG category of $\gl_{m}$ (\ref{defBGG}).
	Then we apply the complete version
	of Theorem \ref{Thm:main2} 
	to prove that the Arakawa-Suzuki functor
	is a fully faithful functor
	between two affine
	highest weight categories (\ref{AS}).   
	 
\section*{Acknowledgements}
	The author is very grateful to 
	Syu Kato
	for many discussions and encouragements.
	He also thanks Ryosuke Kodera 
	for helpful conversations and comments.

\section{Preliminary}
\label{affhwc}

\subsection{Overall notation}

	Fix a field $\kk$ throughout Section
	\ref{affhwc}, \ref{tilting} and 
	\ref{Completion}.
	In Section \ref{application}, we consider the case
	$\kk = \C$.
	For a $\kk$-algebra $H$, we write
	$H \text{-Mod}$ 
	to indicate the 
	$\kk$-linear category
	of all left $H$-modules
	and we write 
	$H \mof$ to indicate the 
	$\kk$-linear category
	of finitely generated left $H$-modules.
	The category $H \mof$ is abelian
	if $H$ is left Noetherian.
	We always assume that a $\kk$-linear 
	abelian category
	is Schurian i.e.\ 
	endomorphism ring of a simple object
	is isomorphic to $\kk$.
	The following general lemma will be used in the sequel.
	\begin{Lem}[cf.\  \cite{BKM} Lemma 1.1]
	\label{Lem:limvan}
	Let $\Cc$ be an abelian category and 
	$U, V \in \Cc$.
	We fix a non-negative integer $i$.
	Suppose that 
	 for a filtration
	$V = V_{0} \supset V_{1} \supset \cdots$
	in $\Cc$,
	we have a natural isomorphism
	$\displaystyle \varprojlim_{n}V/V_{n} \cong 
	V$ and 
	$\Ext_{\Cc}^{i}(U, V_{n}/V_{n+1}) = 0$
	for all $n \geq 0$.
	Then we have 
	$\Ext_{\Cc}^{i}(U, V)=0.$
	\end{Lem}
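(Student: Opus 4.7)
The plan is to reduce the vanishing of $\Ext^i_\Cc(U,V)$ to the hypothesis on the associated graded pieces in two stages: first, promote the vanishing from the subquotients $V_n/V_{n+1}$ to the finite quotients $V/V_n$; second, transfer the result across the inverse limit $V \cong \varprojlim_n V/V_n$ by a Milnor-type argument.

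For the first stage I induct on $n$, the base case $V/V_0 = 0$ being trivial. For the inductive step, I apply $\Hom_\Cc(U,-)$ to the short exact sequence $0 \to V_n/V_{n+1} \to V/V_{n+1} \to V/V_n \to 0$ and read off the segment
\[
\Ext^i_\Cc(U, V_n/V_{n+1}) \to \Ext^i_\Cc(U, V/V_{n+1}) \to \Ext^i_\Cc(U, V/V_n)
\]
of the long exact sequence: the left term vanishes by hypothesis and the right by induction, forcing the middle to vanish. The same long exact sequence, one degree below, together with $\Ext^i_\Cc(U, V_n/V_{n+1}) = 0$, shows that each transition map $\Ext^{i-1}_\Cc(U, V/V_{n+1}) \to \Ext^{i-1}_\Cc(U, V/V_n)$ is surjective; thus the tower $\{\Ext^{i-1}_\Cc(U, V/V_n)\}_n$ satisfies the Mittag-Leffler condition.

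For the second stage I rewrite $V \cong \varprojlim_n V/V_n$ as the short exact sequence
\[
0 \to V \to \prod_n V/V_n \xrightarrow{1-\sigma} \prod_n V/V_n \to 0,
\]
where $\sigma$ is induced by the (surjective) transition maps, whose surjectivity guarantees right exactness. Applying $\Hom_\Cc(U,-)$ and using that $\Ext$ commutes with products in the second argument, the resulting long exact sequence produces the Milnor short exact sequence
\[
0 \to \varprojlim\nolimits_n^{1} \Ext^{i-1}_\Cc(U, V/V_n) \to \Ext^i_\Cc(U, V) \to \varprojlim\nolimits_n \Ext^i_\Cc(U, V/V_n) \to 0.
\]
The rightmost term vanishes by the first stage, and the $\varprojlim^1$ term vanishes by the Mittag-Leffler property established above, so $\Ext^i_\Cc(U,V) = 0$.

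The main obstacle is the homological bookkeeping of the second stage: one needs $\Cc$ to admit exact (at least countable) products so that $\Ext$ commutes with the products appearing above and the Milnor short exact sequence is genuinely exact. This is automatic for the module categories of affine quasi-hereditary algebras to which the lemma will be applied later in the paper.
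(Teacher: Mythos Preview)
The paper does not supply its own proof of this lemma; it simply records the statement with a reference to \cite{BKM} Lemma~1.1. So there is no ``paper's proof'' to compare against, and your two–stage argument (induction on the finite quotients, then a Milnor $\varprojlim{}^{1}$ sequence) is the standard route and is correct in any setting where the second stage makes sense.

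You have already put your finger on the one genuine issue: as stated, $\Cc$ is an \emph{arbitrary} abelian category, and your second stage silently uses that countable products exist in $\Cc$ and that $\Ext^{j}_{\Cc}(U,\prod_{n}-)\cong\prod_{n}\Ext^{j}_{\Cc}(U,-)$. Neither is automatic. In the paper the lemma is only ever invoked inside $H\gmod$ or $\hat H\mof$ for a Noetherian (Laurentian) algebra $H$, with $U$ finitely generated; there one may carry out the Milnor argument in the ambient category of all $H$-modules (where products are exact and $U$ admits a resolution by finitely generated projectives, so $\Ext$ does commute with products), and the resulting $\Ext$ groups agree with those computed in the small category. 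With that caveat made explicit, your proof is complete for every use the paper makes of the lemma; in the literal generality of the statement, some hypothesis of this kind is genuinely needed, and the ``cf.'' in the citation is presumably acknowledging that \cite{BKM} states it in such a concrete setting.
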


\subsection{Notation for graded categories}

	We always refer gradings as
	$\Z$-gradings.
	A graded category is
	a $\kk$-linear category $\CC$
	equipped with a
	self-equivalence $q$ called the grading shift functor
	and its quasi-inverse $q^{-1}$.
	For objects $V, V'$ in the graded category $\CC$,
	we write $V \simeq V'$ to indicate
	$V \cong q^{n}V'$ for some integer $n$.
	We set 
	$\homo_{\CC}(V, V')$ to be 
	the direct sum of spaces $
	\homo_{\CC}(V, V')_{n} :=
	\Hom_{\CC}(q^{n}V, V')$
	of graded homomorphisms of degree $n \in \Z$ and 
	define
	$\ext_{\CC}^{i}(V, V')$ similarly.
	A functor between graded categories 
	is said to be graded if it commutes with
	grading shift functors.
	
	For a graded $\kk$-algebra $H$,
	we denote by $H \gmod$ the category
	of all finitely generated graded left $H$-modules
	$V = \bigoplus_{\n \in \Z}V_{n}$
	whose morphisms 
	preserve the gradings.
	The category 
	$H \gmod$ is a graded category
	whose grading shift functor is defined as
	$(qV)_{n} = V_{n-1}.$
	We put $\homo_{H}(V,V')_{n}:=
	\Hom_{H \gmod}(q^{n}V,V')$. 
	In Section \ref{affhwc} and \ref{tilting},
	any modules (resp.\ ideals) 
	are assumed to be graded (resp. homogeneous).
	
 	A graded vector space $V = \bigoplus_{n \in \Z} V_{n}$
 	is said to be  Laurentian if we have
 	$V_{-n} = 0$ for $n \gg 0$ and
 	$\dim_{\kk} V_{n} < \infty$ for all $n \in \Z$.
 	For a Laurentian graded vector space
 	$V$, its graded dimension is defined as
 	$\gdim_{q}V := \sum_{n \in \Z} (\dim V_{n})q^{n}
 	\in \Z (\!(q)\!).$
 	A graded $\kk$-algebra is said to be Laurentian
 	if its underlying graded vector space is Laurentian.
 	For a Laurentian graded algebra $H$, we write 
	$\hd V$ (resp.\ $\rad V$)
	for the head (resp.\ radical) of an object $V$
	in $H \gmod$.

\subsection{Affine highest weight category}

	In this subsection,
	we review the definition and some properties of
	a (graded) affine highest weight category
	following Kleshchev \cite{affine}.

	Let $\CC$ be
	a graded category with a set
	$\{L(\pi) \mid \pi \in \Pi \}$
	of simple objects,
	which is complete and irredundant up to
	isomorphisms and grading shifts.
	\begin{Def} \label{Def:NLC}
	A graded abelian category $\CC$ is called a
	Noetherian Laurentian category if
	the following three conditions hold:
		\begin{enumerate}
		\item  \label{Def:NLC:filt}
		Every object $V$ in $\CC$ is Noetherian
		and has
		a filtration
		 $V \supset V_1 \supset V_{2} \supset \cdots$
		which is separated 
		(i.e.\ $\bigcap_{n=1}^{\infty}V_{n} = 0$)
		such that each quotient $V/V_n$ has finite length;
		\item Each simple object
		$L(\pi)$ has a projective cover
		$P(\pi) \twoheadrightarrow L(\pi)$;
		\item For all $\pi, \sigma \in \Pi$,
		the graded vector space
		$\homo_{\CC}(P(\pi), P(\sigma))$
		is Laurentian.  
		\end{enumerate}
	\end{Def}
	For example,
	the category $H \gmod$ for
	a left Noetherian Laurentian $\kk$-algebra $H$
	is a Noetherian Laurentian category.
	
	The following lemma easily follows from  
	\cite{affine} Lemma 3.3 (iii).
	\begin{Lem}
	\label{Lem:NLC}
		Let $U, V$ be objects of a Noetherian
		Laurentian category $\CC$ and 
		$V \supset V_{1} \supset V_{2} \supset \cdots$
		be a filtration as 
		in Definition \ref{Def:NLC} (\ref{Def:NLC:filt}).
		Then:
		\begin{enumerate}
		\item \label{Lem:NLC:fin}
		there exists an integer $N$ such that
		$\Hom_{\CC}(U, V_{n})=
		\Ext^{1}_{\CC}(U, V_{n})
		=0$ for all $n \geq N$;
		\item \label{Lem:NLC:lim}
		we have a natural isomorphism
		$\displaystyle
		\varprojlim_{n}\Hom_{\CC}
		(U, V/V_{n})
		\cong
		\Hom_{\CC}(U, V).
		$\qed
		\end{enumerate}
	\end{Lem}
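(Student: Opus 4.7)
These are two linked statements, and (2) follows from (1) by a direct manipulation of the long exact Ext sequence; so the substance is in (1). For (1), my first step is to reduce to the case $U = q^{m}P(\pi)$. Since $\Cc$ has projective covers of simples and $U$ is Noetherian (hence finitely generated), there is a two-term projective presentation $P^{-1} \to P^{0} \to U \to 0$ whose terms are finite direct sums of grading shifts of $P(\pi)$'s. Setting $K = \Ker(P^{0} \to U)$, the projectivity of $P^{0}$ together with the surjection $P^{-1} \twoheadrightarrow K$ yields the exact sequence
\[
0 \to \Hom(U, V_{n}) \to \Hom(P^{0}, V_{n}) \to \Hom(K, V_{n}) \to \Ext^{1}(U, V_{n}) \to 0
\]
and the inclusion $\Hom(K, V_{n}) \hookrightarrow \Hom(P^{-1}, V_{n})$. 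Both vanishings in (1) therefore follow once one knows that $\Hom(q^{m}P(\pi), V_{n}) = 0$ for $n \gg 0$ for each summand appearing in $P^{0}$ and $P^{-1}$.

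For this core vanishing I would invoke the cited Kleshchev's Lemma 3.3 (iii). Its content is essentially the conversion of the abstract hypotheses ``each $V/V_{n}$ has finite length and $\bigcap V_{n} = 0$'' into the statement that, for any fixed $\sigma \in \Pi$ and any bounded range of grading shifts, no subquotient of $V_{n}$ of the form $q^{k}L(\sigma)$ with $k$ in that range survives once $n$ is large enough. Combined with the Laurentian property of $\homo_{\Cc}(P(\pi), P(\sigma))$, and hence of its quotient $\homo_{\Cc}(P(\pi), L(\sigma))$, which forces $\Hom(q^{m}P(\pi), q^{k}L(\sigma)) = 0$ for $k$ sufficiently larger than $m$, any nonzero homomorphism $q^{m}P(\pi) \to V_{n}$ would have to factor through a simple quotient $q^{m}L(\pi)$ of its image in $V_{n}$, which the first fact rules out for $n$ large. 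Taking $N$ to be the maximum of the thresholds arising from the finitely many summands in $P^{0} \oplus P^{-1}$ gives the uniform bound claimed in (1).

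For (2), apply $\Hom(U, -)$ to $0 \to V_{n} \to V \to V/V_{n} \to 0$. For $n \geq N$ from (1), the vanishing of $\Hom(U, V_{n})$ and $\Ext^{1}(U, V_{n})$ makes the natural map $\Hom(U, V) \to \Hom(U, V/V_{n})$ an isomorphism; the transition maps in the inverse system $\{\Hom(U, V/V_{n})\}$ are then eventually isomorphisms, and the inverse limit coincides with the common value $\Hom(U, V)$ through the canonical map. The main obstacle of the proof is precisely the core vanishing for $\Hom(q^{m}P(\pi), V_{n})$: everything else is formal bookkeeping with exact sequences, but extracting a ``shifts go to $+\infty$'' statement about the subobjects $V_{n}$ from filtration data about the quotients $V/V_{n}$ is the nontrivial input, and this is exactly what Kleshchev's Lemma 3.3 (iii) is designed to supply in a Noetherian Laurentian category.
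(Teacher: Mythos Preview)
Your proposal is correct and matches the paper's approach: the paper gives no argument beyond the sentence ``easily follows from \cite{affine} Lemma 3.3 (iii)'' and the \qed, so your write-up is in fact a more detailed unpacking of the same citation. One small imprecision: the space $\homo_{\CC}(P(\pi),L(\sigma))$ is concentrated in a single degree, so the relevant Laurentian input is really that $\homo_{\CC}(P(\pi),V)$ is Laurentian (via a projective presentation of $V$), which makes each $\Hom_{\CC}(q^{m}P(\pi),V_{n})$ a decreasing chain of finite-dimensional spaces whose intersection vanishes by separatedness; your argument is otherwise on target.
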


	We assume
	the set $\Pi$ to be equipped with a partial order $\leq$.
	For each $\pi \in \Pi$, we define
	the standard object $\std(\pi)$ 
	and the proper standard object $\pstd(\pi)$ as:
	\begin{align*}
	  \std(\pi) &:= P(\pi)/ (
	  \sum_{\sigma \not \leq 
	  \pi, f \in \homo_{\CC}(P(\sigma), P(\pi))}
	  \Ima f ),\\
	\pstd(\pi) &:= P(\pi) / (
	  \sum_{\sigma \not < \pi, f \in \homo_{\CC}(P(\sigma), 
	  \rad P(\pi))}
	  \Ima f ).
	\end{align*}
	We say that an object $V \in \CC$
	has a $\std$-filtration if $V$ has a separated filtration
	$V=V_{0} \supset V_{1} \supset \cdots$
	whose subquotients are
	$\simeq \std(\pi)$ for some $\pi \in \Pi$.

	\begin{Def} \label{Def:hwc}
	A Noetherian Laurentian category $\CC$
	is called an affine highest weight category
	if the following three conditions hold:
	\begin{enumerate}
	\item \label{Def:hwc:str}
	For each $\pi \in \Pi$,
	the kernel of the natural quotient map
	$P(\pi) \twoheadrightarrow \std(\pi)$
	has a $\std$-filtration whose subquotients are 
	$ \simeq \std(\sigma)$ with $\sigma > \pi$;

	\item \label{Def:hwc:end}
	For each $\pi \in \Pi$, the algebra
	$B_{\pi} := \eend_{\CC}(\std(\pi))$
	is isomorphic to a graded polynomial ring
	$\kk [ z_{1}, \ldots , z_{n_{\pi}}]$
	for some $n_{\pi} \in \Z_{\geq 0}$
	with $\deg z_{i} \in \Z_{>0};$
	
	\item \label{Def:hwc:free}
	For any $\pi, \sigma \in \Pi$,
	the $B_{\sigma}$-module
	$\homo_{\CC}(P(\pi), \std(\sigma))$
	is graded free of finite rank. 
	\end{enumerate}
	\end{Def}
	
	\begin{Rem} \label{Rem:gldim}
		Kleshchev's definition of affine highest weight category
		in \cite{affine} does not require that 
		each $B_{\pi}$ is a polynomial ring.
		However, we require it
		in order to
		guarantee that the global dimension 
		of $H$ is finite by \cite{affine}
		Corollary 5.25, 
		which is needed in Subsection \ref{equiv}.     
	\end{Rem}
	
	Any affine highest weight category $\CC$
	with finite index poset $\Pi$ is
	equivalent to a graded module category
	$H \gmod$ over
	a left Noetherian Laurentian algebra $H$.
	Such an algebra $H$ is
	characterized 
	as an affine quasi-hereditary algebra,
	without referring to its module category.
	See \cite{affine} Section 6 for the definition.
	More precisely,
	we have the following.

	\begin{Thm}[\cite{affine} Theorem 6.7] 
	\label{Thm:affineqh}
	For a Noetherian Laurentian algebra $H$,
	the category $H \gmod$ 
	is an affine highest weight category
	if and only if
	the algebra $H$ is an affine quasi-hereditary algebra.
	\end{Thm}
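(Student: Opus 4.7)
The plan is to prove both implications by induction on the cardinality $|\Pi|$ of the weight poset, following the strategy familiar from the classical Cline-Parshall-Scott finite-dimensional case, suitably adapted to the affine setting where the endomorphism algebras $B_\pi$ of standard modules are polynomial rings rather than the base field.

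For the forward implication, assume $\CC = H\gmod$ is an affine highest weight category, and pick a minimal weight $\pi_0 \in \Pi$. By minimality combined with condition (\ref{Def:hwc:str}) of Definition \ref{Def:hwc}, the standard module $\std(\pi_0)$ coincides with the projective cover $P(\pi_0)$. The natural candidate for an affine heredity ideal is $J := H e_{\pi_0} H$, where $e_{\pi_0}$ is the primitive idempotent corresponding to $P(\pi_0)$. I would verify that (a) $J$ is idempotent; (b) $J$ admits an $(H,H)$-bimodule identification $J \simeq P(\pi_0) \otimes_{B_{\pi_0}} (e_{\pi_0}H)$; and (c) $e_{\pi_0}H$ is graded free of finite rank over $B_{\pi_0}$. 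Items (a) and (b) follow from conditions (\ref{Def:hwc:str}) and (\ref{Def:hwc:end}) together with the minimality of $\pi_0$, while (c) is a direct consequence of condition (\ref{Def:hwc:free}) applied to $\sigma = \pi_0$. One then observes that the quotient $H/J$ remains Noetherian Laurentian and that its graded module category inherits an affine highest weight structure indexed by $\Pi \setminus \{\pi_0\}$, closing the induction.

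For the reverse implication, starting from an affine quasi-hereditary structure given by a chain $0 = J_0 \subset J_1 \subset \cdots \subset J_\ell = H$ of ideals with each successive quotient an affine heredity ideal, the bottom ideal $J_1$ has the form $P \otimes_B Q$ for some projective $H$-module $P$ with polynomial endomorphism algebra $B = \End_H(P)$ and a graded free $B$-module $Q$. I would set $\std(\pi_0) := P$, declare $\pi_0$ minimal, and apply induction to $H/J_1$ to recover the rest of the poset and the standard objects. The three axioms of Definition \ref{Def:hwc} are then checked by unpacking the bimodule structure of the heredity ideal and combining it with standard properties of projective covers in the Noetherian Laurentian framework.

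The principal difficulty, and the reason the theorem is nontrivial rather than a definition-chase, is the faithful translation between the module-theoretic axioms of an affine highest weight category and the purely ring-theoretic axioms defining an affine heredity ideal. In particular, axiom (\ref{Def:hwc:free}) — graded freeness of $\homo_{\CC}(P(\pi), \std(\sigma))$ over $B_\sigma$ — is substantially stronger than its finite-dimensional counterpart and must be matched carefully with the tensor-product decomposition $J_i/J_{i-1} \simeq P \otimes_B Q$. The Noetherian Laurentian hypothesis is also indispensable: it ensures that the separated filtrations of Definition \ref{Def:NLC}(\ref{Def:NLC:filt}) behave well under passage to subquotients $H/J_i$, and via Lemma \ref{Lem:NLC} that the relevant $\Hom$ and $\Ext$ computations converge in the required inverse limits.
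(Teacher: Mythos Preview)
The paper does not supply its own proof of this theorem: it is quoted verbatim from Kleshchev \cite{affine}, Theorem~6.7, and used as a black box. There is therefore nothing in the present paper to compare your argument against.

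That said, your sketch contains a genuine error of orientation. You write that for a \emph{minimal} weight $\pi_0$ one has $\std(\pi_0)=P(\pi_0)$, invoking condition~(\ref{Def:hwc:str}) of Definition~\ref{Def:hwc}. This is backwards. Condition~(\ref{Def:hwc:str}) says that $\Ker\bigl(P(\pi)\twoheadrightarrow\std(\pi)\bigr)$ is filtered by standard objects $\std(\sigma)$ with $\sigma>\pi$; hence the kernel vanishes precisely when $\pi$ is \emph{maximal}, not minimal. For a minimal $\pi_0$ the kernel is typically as large as possible. Consequently the ideal $J=He_{\pi_0}H$ you build from a minimal idempotent will not in general be an affine heredity ideal, and the induction does not start. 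The same reversal contaminates your backward direction: the weight attached to the bottom heredity ideal $J_1$ should be declared maximal, not minimal.

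With ``minimal'' replaced everywhere by ``maximal'', your outline is the standard Cline--Parshall--Scott strategy and is indeed how Kleshchev's argument proceeds; but as written the proposal fails at the first step.
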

	
	From now on, we assume that the index poset 
	$\Pi$ is finite.
	Thanks to Theorem \ref{Thm:affineqh},
	we can use the expressions 
	``$H$ is an affine quasi-hereditary algebra''
	and ``$H \gmod$ is an affine highest weight category''
	interchangeably. 
	
	For each $\pi \in \Pi$,
	let $I(\pi)$ be an injective hull of $L(\pi)$
	in the category of all graded $H$-modules.
	Let $A(\pi)$ be
	the largest
	submodule of $I(\pi)/L(\pi)$ among all of whose
	composition factors are of the form
	$\simeq L(\sigma)$ for $\sigma < \pi$.
	We define the proper costandard module
	$\pcstd(\pi) \subset I(\pi)$ to be the preimage of $A(\pi)$
	with respect to the quotient map 	$I(\pi) \to I(\pi)/L(\pi)$.

	\begin{Thm}[\cite{affine} Theorem 7.6]
	\label{Thm:BHBGG}
	For each $\pi \in \Pi $, 
	the module $\pcstd(\pi)$ 
	has finite length and
	characterized by the property:
	$$ \displaystyle
	\ext_{H}^{i}(\std(\sigma),\pcstd(\pi)) =
					\begin{cases}
					\kk
					& i=0,\sigma = \pi ;\\
					0 & \text{otherwise}.
					\end{cases}
	$$
	\end{Thm}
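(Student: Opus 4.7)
My plan is to treat the $i = 0$ case by a direct head-and-socle argument and the higher $\Ext$-vanishing by downward induction on $\sigma \in \Pi$; finite length and uniqueness then fall out as corollaries.

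For $i = 0$: given any nonzero $f : \std(\sigma) \to \pcstd(\pi)$, its image $\Ima f$ has head $L(\sigma)$ (as a quotient of $\std(\sigma)$) and socle $L(\pi)$ (as a nonzero submodule of $\pcstd(\pi)$, whose socle equals $L(\pi) = \soc I(\pi)$). Since the composition factors of $\pcstd(\pi)$ are $L(\tau)$ with $\tau \leq \pi$, and $L(\pi)$ occurs with multiplicity one, comparing composition factors of $\Ima f$ from both sides forces $\sigma \leq \pi$ and $\pi \leq \sigma$, hence $\sigma = \pi$. Unicity of $L(\pi)$ in $\pcstd(\pi)$ then collapses $\Ima f$ to $L(\pi)$, so $f$ is a scalar multiple of the canonical composition $\std(\pi) \twoheadrightarrow L(\pi) \hookrightarrow \pcstd(\pi)$, which sits in degree $0$.

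For $i \geq 1$: induct downward on $\sigma$. When $\sigma$ is maximal, $\std(\sigma) = P(\sigma)$ by Definition \ref{Def:hwc} (\ref{Def:hwc:str}), so there is nothing to prove. In general, applying $\Hom_H(-, \pcstd(\pi))$ to
$$0 \to K(\sigma) \to P(\sigma) \to \std(\sigma) \to 0$$
reduces the problem, by projectivity of $P(\sigma)$, to showing $\ext_H^{j}(K(\sigma), \pcstd(\pi)) = 0$ in the appropriate degrees, where $K(\sigma)$ admits a separated $\std$-filtration $K(\sigma) = K_0 \supset K_1 \supset \cdots$ with subquotients $\simeq \std(\tau)$, $\tau > \sigma$. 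The inductive hypothesis combined with the previous paragraph gives the required vanishing on every subquotient. Iterated long exact sequences propagate this to each finite truncation $K(\sigma)/K_n$, and the passage to the infinite filtration is carried out via Lemma \ref{Lem:NLC} (\ref{Lem:NLC:lim}) (which presents $K(\sigma)$ as $\varprojlim K(\sigma)/K_n$) together with Lemma \ref{Lem:limvan} (Ext-vanishing under inverse limits).

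Finite length of $\pcstd(\pi)$ then follows by a secondary induction on $|\Pi_{\leq \pi}|$: once the Ext-orthogonality is in hand, each multiplicity $[\pcstd(\pi) : L(\tau)]$ is controlled by a finite-dimensional $\Hom$-space, and uniqueness among modules satisfying the formula is routine. The main obstacle is the lim-vanishing step above: Lemma \ref{Lem:limvan} as stated filters the second argument of $\Ext$, whereas here the filtration sits on the first argument $K(\sigma)$; to bridge this I would exploit the Laurentian structure — the generators of the deep pieces $K_n$ shift into higher and higher gradings — so that graded $\ext$ out of $K_n$ into the finite-length module $\pcstd(\pi)$ vanishes degreewise, reducing the computation to the finite truncations and bringing it into the scope of Lemma \ref{Lem:limvan}.
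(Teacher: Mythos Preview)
The paper does not prove this theorem; it is quoted from Kleshchev \cite{affine} Theorem~7.6 without argument, so there is no in-paper proof to compare your proposal against.

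Your outline follows the natural route and the $i=0$ computation is fine. The gap is a circularity in the finiteness assertions. You postpone finite length of $\pcstd(\pi)$ to a ``secondary induction'' after the Ext-orthogonality, but your proposed Laurentian fix for the limit step in the $i\geq 1$ argument already uses it: you want graded $\ext$ out of the deep filtration pieces $K_n$ into $\pcstd(\pi)$ to vanish because the generators of $K_n$ sit in high degree, and this forces vanishing only if $\pcstd(\pi)$ is concentrated in finitely many degrees --- equivalently, is of finite length. Nothing in the bare definition of $\pcstd(\pi)$ as a submodule of the injective hull $I(\pi)$ (taken in the category of \emph{all} graded $H$-modules, where $I(\pi)$ is typically not finitely generated) gives this a priori; Noetherianity of $H$ does not help since $I(\pi)\notin H\gmod$. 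In Kleshchev's treatment the circle is broken by first realising $\pcstd(\pi)$ as the restricted graded dual of a proper standard module over $H^{\mathrm{op}}$, which is of finite length by construction; finite length of $\pcstd(\pi)$ is then the starting point rather than a corollary, and the Ext-orthogonality follows by dualising Hom-computations on the standard side. Your sketch would need some independent a priori bound on $\pcstd(\pi)$ before the downward induction can run.
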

	
	Thanks to Lemma \ref{Lem:NLC}
	(\ref{Lem:NLC:fin}) 
	and Theorem \ref{Thm:BHBGG},
	we know that the length of any $\std$-filtration 
	of $V \in  H \gmod$
	is finite and the multiplicity of 
	grading shifts of $\std(\pi)$ is equal to
	$(V:\std(\pi)):= \gdim_{q} 
	\homo_{H}(V, \pcstd(\pi)) |_{q=1} .$ 
	
	We say that an object $V \in \CC$
	has a $\pcstd$-filtration if $V$ has a separated filtration
	$V=V_{0} \supset V_{1} \supset \cdots$ 
	whose subquotients are
	$\simeq \pcstd(\pi)$ for some $\pi \in \Pi$.
	
	The following lemma is frequently used in the sequel.
	
	\begin{Lem}[\cite{affine} Lemma 5.10 and 7.7]
	 \label{Lem:ext}
	 Let $\sigma, \pi \in \Pi$.
	 \begin{enumerate}
	 \item \label{Lem:ext:std}
	 If $\sigma \not < \pi$, then
		$\ext^{1}_{H}(\std(\sigma), \std(\pi))=0;$
	\item \label{Lem:ext:pcstd}	
	If $\sigma < \pi$, then
	$\ext_{H}^{1}(\pcstd(\sigma), \pcstd(\pi)) = 0.$
	\end{enumerate}
	\end{Lem}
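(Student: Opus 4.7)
The plan is to prove each vanishing via the long exact sequence obtained by applying the appropriate $\homo$ functor to a defining short exact sequence, combined with an obstruction coming from composition-factor labels.

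For (1), apply $\homo_H(-,\std(\pi))$ to the sequence $0 \to K_\sigma \to P(\sigma) \to \std(\sigma) \to 0$, whose kernel $K_\sigma$ admits a separated $\std$-filtration with subquotients $\simeq \std(\tau)$, $\tau > \sigma$, by Definition \ref{Def:hwc}(\ref{Def:hwc:str}). Projectivity of $P(\sigma)$ gives $\ext_H^1(P(\sigma),\std(\pi))=0$, so it suffices to show $\homo_H(K_\sigma,\std(\pi))=0$. The key observation is that for $\tau \not\leq \pi$ the space $\homo_H(\std(\tau),\std(\pi))$ vanishes in every degree: a nonzero morphism would force $L(\tau)$ to appear as a composition factor of $\std(\pi)$, contradicting the fact that the composition factors of $\std(\pi)$ are labelled by $\rho\leq\pi$ by construction. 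For $\tau > \sigma$ with $\sigma \not< \pi$ one indeed has $\tau \not\leq \pi$, since $\sigma < \tau \leq \pi$ would yield $\sigma < \pi$. Passing from the subquotient-wise vanishing to $K_\sigma$ itself is handled by iterating the long exact sequence along the filtration $K_\sigma = K^0 \supset K^1 \supset \cdots$ and invoking the Laurentian hypothesis: the minimum degree of $K^n$ tends to infinity as $n\to\infty$ (since $\bigcap_n K^n = 0$ in a Laurentian category), so any morphism $K_\sigma \to \std(\pi)$ must vanish.

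For (2), consider the sequence $0 \to \pcstd(\pi) \to I(\pi) \to C(\pi) \to 0$ with $C(\pi):=I(\pi)/\pcstd(\pi)$. Since $I(\pi)$ is injective in all graded $H$-modules, $\ext_H^1(\pcstd(\sigma), I(\pi))=0$, and the long exact sequence reduces the claim to $\homo_H(\pcstd(\sigma), C(\pi))=0$. By the maximality in the definition of $\pcstd(\pi)$ (the preimage of the largest submodule of $I(\pi)/L(\pi)$ whose composition factors are $L(\rho)$ with $\rho<\pi$), the cokernel $C(\pi)$ has no simple submodule of the form $L(\rho)$ with $\rho<\pi$. On the other hand, every nonzero image of $\pcstd(\sigma)$ has composition factors among $L(\rho)$, $\rho\leq\sigma<\pi$; since $\pcstd(\sigma)$ has finite length by Theorem \ref{Thm:BHBGG}, any such image has nonzero socle consisting of such $L(\rho)$'s, contradicting the socle condition on $C(\pi)$. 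Hence no nonzero map $\pcstd(\sigma)\to C(\pi)$ exists.

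The main obstacle is the limit argument in part (1): the $\std$-filtration of $K_\sigma$ is typically of infinite length, and converting the subquotient-wise vanishing into vanishing of $\homo_H(K_\sigma, \std(\pi))$ requires a careful interplay between the Laurentian degree estimates, the separatedness $\bigcap_n K^n = 0$, and the potentially infinite composition length of $\std(\pi)$. Lemmas \ref{Lem:limvan} and \ref{Lem:NLC} provide the technical backbone for such passages to the limit; although they are phrased for filtrations of the second argument, an analogue for filtrations of the first argument, rooted in the same Laurentian principle, is what closes the argument.
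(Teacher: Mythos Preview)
The paper does not give its own proof of this lemma; it simply records it as a citation of Kleshchev \cite{affine}, Lemmas 5.10 and 7.7. So there is no internal argument to compare against, and I evaluate your proposal on its own merits.

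Your argument for (\ref{Lem:ext:pcstd}) is correct. The maximality defining $A(\pi)$ forces the socle of $C(\pi) = I(\pi)/\pcstd(\pi)$ to contain no $L(\rho)$ with $\rho < \pi$, while any nonzero image of the finite-length module $\pcstd(\sigma)$ (with $\sigma < \pi$) would contribute exactly such a simple to the socle of $C(\pi)$. This is the standard injective--coresolution argument and needs no further justification.

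Your argument for (\ref{Lem:ext:std}) has the right skeleton but a genuine gap in the limit step. You correctly reduce to $\homo_H(K_\sigma, \std(\pi)) = 0$, and the subquotient-wise vanishing $\homo_H(\std(\tau), \std(\pi)) = 0$ for $\tau > \sigma$ is fine. The problem is the passage to the limit. What the subquotient vanishing actually yields, via the long exact sequence for $0 \to K^{n+1} \to K^n \to \std(\tau_n) \to 0$, is that each restriction map $\homo_H(K^n, \std(\pi)) \hookrightarrow \homo_H(K^{n+1}, \std(\pi))$ is \emph{injective}; hence a given $f \colon K_\sigma \to \std(\pi)$ vanishes if and only if $f|_{K^n} = 0$ for \emph{some} $n$. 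Your claim that ``the minimum degree of $K^n$ tends to infinity, so any morphism $K_\sigma \to \std(\pi)$ must vanish'' does not give this: the target $\std(\pi)$ is typically unbounded above in degree, so a high lower bound on $K^n$ does not force $f|_{K^n} = 0$. The unspecified ``analogue for filtrations of the first argument'' of Lemmas \ref{Lem:limvan} and \ref{Lem:NLC} is not supplied, and I do not see how to extract it from those lemmas as stated.

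The fix is already available in the paper. The paragraph immediately preceding Lemma \ref{Lem:ext} records (as a consequence of Theorem \ref{Thm:BHBGG} and Lemma \ref{Lem:NLC}(\ref{Lem:NLC:fin})) that every $\std$-filtration of an object of $H\gmod$ has \emph{finite} length. In particular, your filtration $K_\sigma = K^0 \supset K^1 \supset \cdots$ terminates, and the argument becomes a finite induction with no limit step needed. Your remark that the filtration is ``typically of infinite length'' is therefore mistaken in this context; dropping that worry and citing the finiteness statement closes the proof.
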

	
\section{Tilting modules}
\label{tilting}

	We keep the notation in the previous section.

	Let $H$ be an affine quasi-hereditary algebra
	with the index poset $\Pi$.
	We write $\F(\std)$ (resp.\ $\F(\pcstd)$)
	to indicate the full subcategory
	of $H \gmod$ consisting of
	all $\std$-filtered (resp.\ $\pcstd$-filtered) modules.

	\begin{Def}
		A module $V \in H \gmod$ is called a
		tilting module
		if it is both $\std$-filtered and 
		$\pcstd$-filtered, i.e. it belongs to
		$\T := \F(\std) \cap \F(\pcstd).$
	\end{Def}
	
\subsection{Homological characterization}
\label{char}	
	\begin{Thm} \label{Thm:char}
	Let $H$ be an affine quasi-hereditary algebra
	and $V \in H \gmod.$
	\begin{enumerate}
	
	\item \label{Thm:char:std}
	$V$ belongs to $\F(\std)$ if and only if
	$\ext^{1}_{H}(V, \pcstd(\pi)) = 0$ for all $\pi \in \Pi$;
	
	\item \label{Thm:char:pcstd}
	$V$ belongs to $\F(\pcstd)$
	if and only if
	$\ext^{1}_{H}(\std(\pi), V) = 0$ for all $\pi \in \Pi$.
	
	\end{enumerate}
	\end{Thm}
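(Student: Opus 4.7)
The plan is to establish (1) directly; the symmetric statement (2) follows by a completely dual argument, replacing projective covers with injective hulls of the simples, $\std$-filtrations by $\pcstd$-filtrations built from the bottom, and ``maximal'' by ``minimal'' in the poset $\Pi$ throughout.

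For the forward direction of (1), I induct on the length of a $\std$-filtration of $V$, which is finite by the remark following Theorem~\ref{Thm:BHBGG}. The inductive step applies $\homo_H(-, \pcstd(\pi))$ to a short exact sequence $0 \to V' \to V \to \std(\sigma) \to 0$ with $V'$ having a strictly shorter $\std$-filtration; the resulting long exact sequence, combined with the vanishing $\ext^1_H(\std(\sigma), \pcstd(\pi)) = 0$ from Theorem~\ref{Thm:BHBGG} and the inductive hypothesis $\ext^1_H(V', \pcstd(\pi)) = 0$, forces $\ext^1_H(V, \pcstd(\pi)) = 0$.

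For the converse, assume $\ext^1_H(V, \pcstd(\pi)) = 0$ for every $\pi$. I plan to peel off a top layer isomorphic to a direct sum of grading shifts of $\std(\pi)$ for some maximal $\pi$ in the support of $V$, verify that the quotient still satisfies the Ext-vanishing hypothesis, and iterate. Concretely, let $\supp V \subseteq \Pi$ be the finite set of indices $\pi$ such that $L(\pi)$ is a composition factor of some finite-length quotient $V/V_n$ (from Definition~\ref{Def:NLC}), pick $\pi \in \supp V$ maximal, and set $V^\pi := \sum_{f \colon P(\pi) \to V} \Ima f$. By Definition~\ref{Def:hwc}(\ref{Def:hwc:str}), the kernel $N$ of $P(\pi) \twoheadrightarrow \std(\pi)$ is $\std$-filtered by $\std(\sigma)$ with $\sigma > \pi$, so $\hd N$ has composition factors among $\{L(\sigma) : \sigma > \pi\}$; by maximality of $\pi$, none of these lies in $V$, forcing $f(N) = 0$ and hence factoring every $f$ through $\std(\pi)$. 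Thus $V^\pi$ is a quotient of a direct sum of grading shifts of $\std(\pi)$.

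The main obstacle is then to upgrade $V^\pi$ to an honest direct sum of standards and to verify that $V/V^\pi$ still satisfies the Ext-vanishing hypothesis, so that the induction on the multiplicity of $L(\pi)$ in $V$ can continue. For the first point, the key is to show that the $B_\pi$-module $\homo_H(\std(\pi), V)$ is graded free; this uses the Ext-vanishing hypothesis together with Definition~\ref{Def:hwc}(\ref{Def:hwc:end})--(\ref{Def:hwc:free}) (so $B_\pi$ is a polynomial ring and $\homo_H(P(\pi), \std(\pi))$ is graded free over it), making the evaluation map $\homo_H(\std(\pi), V) \otimes_{B_\pi} \std(\pi) \to V^\pi$ an isomorphism. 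For the second point, apply $\homo_H(-, \pcstd(\sigma))$ to $0 \to V^\pi \to V \to V/V^\pi \to 0$: for $\sigma \ne \pi$ the orthogonality $\homo_H(V^\pi, \pcstd(\sigma)) = 0$ from Theorem~\ref{Thm:BHBGG} suffices, while for $\sigma = \pi$ one needs the restriction map $\homo_H(V, \pcstd(\pi)) \twoheadrightarrow \homo_H(V^\pi, \pcstd(\pi))$ to be surjective, which is the most delicate piece and again rests on the graded-free structure just established.
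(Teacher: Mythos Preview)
Your treatment of (1) is fine and matches the argument in Kleshchev's paper that the present paper cites.  The genuine gap is the claim that (2) follows by a ``completely dual argument.''  The affine setting is not self-dual in the relevant sense: projective covers $P(\pi)$ lie in $H\gmod$, but injective hulls $I(\pi)$ are taken in the category of \emph{all} graded $H$-modules and are typically not finitely generated; $\std$-filtrations are always finite (as you used), whereas $\pcstd$-filtrations are only required to be separated and can have infinite length; and there is no analogue of Definition~\ref{Def:hwc}(\ref{Def:hwc:free}) on the costandard side to make a ``graded free over $B_\pi$'' argument go through.  Concretely, your proposed dual induction would peel off a $\pcstd(\pi)$ from the socle for a minimal $\pi$; but since $V$ may have infinite length, removing one proper costandard need not shrink $\supp V$, so the induction does not terminate.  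Remark~\ref{Rem:char} makes exactly this point: the finite-length case of (2) was already in Kleshchev, and the content of the theorem here is precisely the general case that your duality sketch does not cover.

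The paper's proof of (2) replaces the missing duality with a direct construction.  The forward implication already needs care: because a $\pcstd$-filtration can be infinite, one invokes Lemma~\ref{Lem:limvan} (a projective-limit vanishing lemma) rather than a finite induction.  The converse is obtained from Lemma~\ref{Lem:saturated}, an induction over saturated subsets $A\subset\Pi$ that produces, for each $A$, a filtration of $V$ whose subquotients are $\pcstd$-filtered by $\pcstd(\sigma)$ with $\sigma\notin A$, together with a tail condition controlled via a projective limit $J(A')$.  The key step shows that a certain finite-length quotient $X\subset q^m\pcstd(\pi)$ is in fact all of $q^m\pcstd(\pi)$, which uses the already-known finite-length case of (2) inside the truncated category $H\gmod_{A'}$.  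Taking $A=\varnothing$ yields the desired $\pcstd$-filtration of $V$.
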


	\begin{proof}
	(\ref{Thm:char:std}) is \cite{affine} Lemma 7.8.
	A proof of (\ref{Thm:char:pcstd})
	will be given after Lemma \ref{Lem:saturated}.
	\end{proof}
	
	\begin{Rem} \label{Rem:char}
	For the case where $V$ is of finite length, 
	Theorem \ref{Thm:char} (\ref{Thm:char:pcstd}) 
	has been proved by Kleshchev 
	\cite{affine} Lemma 7.9.  
	\end{Rem}
	
	For a module $V \in H \gmod$, we define its support set
	$\supp(V) \subset \Pi$ by
	$
	 \supp(V) := 
	 \{\pi \in \Pi \mid \homo_{H}(P(\pi), V) \neq 0 \}.
	$
	 We say that a subset $A$ of $\Pi$
	 is saturated
	 if it has the property
	 that $\pi \in A$ whenever
	 $\pi \leq \sigma$ and $\sigma \in A$.

	\begin{Lem} \label{Lem:saturated}
	Let $V \in H \gmod$
	and  fix a separated filtration
	$V = V_{0}\supset V_{1}  \supset \cdots$
	whose subquotients $V_{r}/V_{r+1}$ are simple
	for all $r \geq 0$.
	Assume that
	$\ext_{H}^{1}(\std(\sigma), V) = 0$ for all $\sigma \in \Pi.$
	Then for each saturated subset $A \subset \Pi$,
	there exists a filtration
	$V = W(A)_{0} \supset W(A)_{1} \supset \cdots$
	which satisfies the following two conditions:
	\begin{itemize}
	\item[$(\alpha)$] For each $r \geq 0$, 
	the subquotient $W(A)_{r}/W(A)_{r+1}$
	has a $\pcstd$-filtration of finite length
	whose subquotients are of the form 
	$\simeq \pcstd(\sigma)$
	for some $\sigma \in \Pi \setminus A$;
	\item[$(\beta)$]
	For each $n \geq 0$, we have
	$\supp(W(A)_{N}/(V_{n}\cap W(A)_{N})) \subset A$
	for $N \gg 0$.
	\end{itemize}
	\end{Lem}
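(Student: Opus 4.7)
I would argue by descending induction on $|A|$. The base case $A = \Pi$ is immediate: set $W(\Pi)_r := V$ for all $r$, so every subquotient is zero and both $(\alpha)$ and $(\beta)$ hold vacuously.

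For the inductive step, fix $A \subsetneq \Pi$ and suppose the lemma is established for every saturated set of cardinality greater than $|A|$. Choose $\pi \in \Pi \setminus A$ minimal; then $A' := A \cup \{\pi\}$ is saturated with $|A'| = |A| + 1$, and by induction we obtain a filtration $W(A')_\bullet$ satisfying $(\alpha)$ and $(\beta)$ for $A'$. The goal is to refine $W(A')_\bullet$ into a finer filtration $W(A)_\bullet$ whose subquotient labels may now include $\pi$ (absorbing the $L(\pi)$-composition factors currently permitted by $(\beta)$ for $A'$) and whose deep terms lose their $L(\pi)$-content modulo $V_n$.

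Concretely, I would work inside each finite-length quotient $V/V_n$. Combining Lemma~\ref{Lem:NLC}(\ref{Lem:NLC:fin}) with the finite global dimension of $H$ (Remark~\ref{Rem:gldim}) yields $\ext^i_H(\std(\sigma), V_n) = 0$ for every $i \geq 0$ and $n$ sufficiently large, and the long exact sequence attached to $0 \to V_n \to V \to V/V_n \to 0$ then forces $\ext^1_H(\std(\sigma), V/V_n) = 0$. Kleshchev's finite-length result \cite{affine} Lemma~7.9 (cf.\ Remark~\ref{Rem:char}) supplies a $\pcstd$-filtration of $V/V_n$. The minimality of $\pi$ gives $\sigma \not< \pi$ for every $\sigma \in \Pi \setminus A'$, so the only way $\pcstd(\sigma)$ can contribute an $L(\pi)$-composition factor is via $\pi < \sigma$; in that case Lemma~\ref{Lem:ext}(\ref{Lem:ext:pcstd}) provides $\ext^1_H(\pcstd(\pi), \pcstd(\sigma)) = 0$, allowing one to rearrange the $\pcstd$-filtration of $V/V_n$ so that $\pcstd(\tau)$ with $\tau \in A$ occupy the deep tier and the remaining $\pcstd(\sigma)$-factors, including the freshly inserted $\pcstd(\pi)$-pieces, occupy the shallow tier. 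Pulling back through $V \twoheadrightarrow V/V_n$ yields a candidate refinement of $W(A')_\bullet$ for each $n$.

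The principal obstacle is coherence of these per-$n$ refinements: the $\pcstd$-filtrations of $V/V_n$ and their rearrangements depend on $n$, so one must choose them consistently in order that a single filtration $W(A)_\bullet$ of $V$ emerges. I would address this by taking an inverse limit over $n$ and invoking Lemma~\ref{Lem:NLC}(\ref{Lem:NLC:lim}) to identify the limits with genuine submodules of $V$, and Lemma~\ref{Lem:limvan} to transfer any remaining $\ext^1$-vanishing from the finite-length setting back to $V$. With coherence established, property $(\alpha)$ follows from the finite-length $\pcstd$-structure on each subquotient, while $(\beta)$ follows from the deep-tier-in-$A$ arrangement by construction; the induction is then complete.
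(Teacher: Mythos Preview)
Your inductive framework (descending induction on $|A|$, base case $A=\Pi$, choosing $\pi$ minimal in $\Pi\setminus A$, setting $A'=A\cup\{\pi\}$) matches the paper exactly. The divergence is in the inductive step, and there the sketch has real gaps.

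The paper does \emph{not} construct $\pcstd$-filtrations of the quotients $V/V_n$ and attempt to glue them. Instead it forms the inverse limit $J(A')=\varprojlim W(A')_{N_n}/(V_n\cap W(A')_{N_n})$ as a submodule of $V$, proves $\ext^1_H(\std(\sigma),J(A'))=0$ for all $\sigma\in A'$, and then iteratively peels off one $\pcstd(\pi)$ factor at a time: it locates the first occurrence of $L(\pi)$ in the tower, maps to the injective hull $I(\pi)$, and shows by a delicate argument (long exact sequences plus the finite-length version of Theorem~\ref{Thm:char}(\ref{Thm:char:pcstd}) applied inside $H\gmod_{A'}$) that the image $X\subset q^m\pcstd(\pi)$ is actually \emph{all} of $q^m\pcstd(\pi)$. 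This last point is the heart of the proof, and your sketch contains no analogue of it: you never explain where the ``freshly inserted $\pcstd(\pi)$-pieces'' come from or why the $L(\pi)$ content of the deep part assembles into full proper costandards rather than proper submodules thereof.

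Two further concrete problems. First, you name coherence as the principal obstacle and then wave it away with inverse limits and Lemma~\ref{Lem:NLC}(\ref{Lem:NLC:lim}); but the $\pcstd$-filtrations of $V/V_n$ supplied by the finite-length lemma are not canonical, there is no map between the filtrations for different $n$, and that lemma only compares $\Hom$-spaces, not filtrations. Producing a single coherent filtration is exactly what the paper's iterative Claim accomplishes, and it takes work. Second, your rearrangement step is underjustified: to push $\pcstd(\tau)$ with $\tau\in A$ below $\pcstd(\sigma)$ with $\sigma\notin A$ you need $\ext^1_H(\pcstd(\tau),\pcstd(\sigma))=0$, and Lemma~\ref{Lem:ext}(\ref{Lem:ext:pcstd}) yields this only when $\tau<\sigma$; saturation of $A$ gives merely $\sigma\not\leq\tau$, which is weaker. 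You invoke only the special case $\tau=\pi$, but a full rearrangement into an $A$-deep/$(\Pi\setminus A)$-shallow form requires more. Finally, note that if your direct $V/V_n$ approach worked it would not use the filtration $W(A')_\bullet$ at all, so the induction would be idle---another sign that the mechanism is incomplete.
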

	\begin{proof}
		We proceed by induction on the size of $\Pi 
		\setminus A$.
		The case where $A = \Pi$ is trivial
		by setting $W(A)_{i} := V$ for every $i \geq 0$.

		Let $\pi \in \Pi \setminus A$ be a minimal element
		and set $A' := A \cup \{ \pi \}$.
		Then $A'$ is a saturated subset 
		and $|A'| = |A|+1$.
		By induction hypothesis,
		there exists a filtration
		$V = W(A')_{0} \supset W(A')_{1} \supset \cdots$
		satisfying the conditions $(\alpha)$
		and $(\beta)$ with respect to $A'$.
		
		Since for each $n$ the quotient
		$V/V_{n}$ is finite length,
		we find a sequence of integers
		$0<N_{1} < N_{2} < \cdots$ such that
		for each $n$ we have natural isomorphisms
		$W(A')_{N_{n}}/V_{n}\cap W(A')_{N_{n}} \cong 
		W(A')_{r}/V_{n}\cap W(A')_{r}$
		for all $r \geq N_{n}$.
		Note that 
		we have a natural surjective map
		$$\frac{W(A')_{N_{n+1}}}{V_{n+1} \cap W(A')_{N_{n+1}}}
	\twoheadrightarrow
	\frac{W(A')_{N_{n+1}}}{V_{n} \cap W(A')_{N_{n+1}}}
	\cong
	\frac{W(A')_{N_{n}}}{V_{n} \cap W(A')_{N_{n}}},
	$$
	for each $n \geq 0.$
		The projective limit
		$$J(A')
		:= \varprojlim_{n} \frac{W(A')_{N_{n}}}
		{V_{n} \cap W(A')_{N_{n}}}
		$$
		is contained in 
		$
		W(A')_{m}
		$
		for all $m$.
		By construction
		and the condition $(\beta)$, we have
		$\supp J(A') \subset A'.$
		For each $\sigma \in A'$,
		we apply the functor $\homo_{H}(\std(\sigma), -)$
		to the short exact sequence 
		$0 \to J(A') \to V \to V/J(A') \to 0$
		to get:
		$$
		\homo_{H}(\std(\sigma), V/J(A')) 
		\to 
		\ext^{1}_{H}(\std(\sigma), J(A'))
		\to 
		\ext^{1}_{H}(\std(\sigma), V) = 0. 
		$$
		We observe that
		$\homo_{H}(\std(\sigma), V/J(A')) =		
		0$,
		which follows from the fact
		$V/ J(A')$ has a $\pcstd$-filtration 
		whose subquotients are 
		$\simeq \pcstd(\rho)$ for some $\rho \in \Pi 
		\setminus A'$
		and Theorem \ref{Thm:BHBGG}.
		Thus we have $\ext^{1}_{H}(\std(\sigma), J(A')) = 0$
		for all $\sigma \in A'.$

		If we have
		$\supp(W(A')_{N_{n}}/(V_{n}\cap W(A')_{N_{n}})) 
		\subset A$
		for all $n \geq 0$, then
	 	put $W(A)_{n} := W(A')_{n}$ for all $n \geq 0$ and
	 	we are done.
	 	If not,
	 	we prove 
	 	the following claim:
	 	\begin{Claim}
	 	There is a strictly increasing sequence
		of integers
		$0 \leq n_1 < n_2 < \cdots$, and
		for each $i \in \Z_{\geq 0}$
		a filtration
		$V = W_{i}(A)_{0} \supset W_{i}(A)_{1} \supset \cdots$
		such that:
		\begin{itemize}
		\item[$(\alpha)_{i}$]
		For each $r \geq 0$, the subquotient
		$W_{i}(A)_{r}/W_{i}(A)_{r+1}$ has a $\pcstd$-filtration
		of finite length whose
		subquotients are of the form $\simeq \pcstd(\sigma) $
		for some $\sigma \in \Pi \setminus A$;
		\item[$(\beta)_{i}$]
		For each $0 \leq r \leq n_i$, we have
		$\supp(W_{i}(A)_{N}/(V_{r}\cap W_{i}(A)_{N}))
		\subset A$
		for sufficiently large $N$;
		\item[$(\gamma)_{i}$]
		$W_{i}(A)_r = W_{i-1}(A)_{r}$ for $ 0 \leq r < n_{i}. $
		\end{itemize}
		\end{Claim}
		\begin{proof}
		Let $n_1$ be the smallest integer
		satisfying
		$\pi \in \supp(W(A')_{N_{n_1}}/
		(V_{n_1}\cap W(A')_{N_{n_1}})) $.
		We put $W :=W(A')_{N_{n_1}}$.
		Since $\pi \in \supp(W/ (V_{n_1} \cap W))$
		and $\pi \not \in \supp(W/(V_{n_{1}-1}\cap W))$,
		we see that 
		there is a unique embedding
		$q^{m}L(\pi) \hookrightarrow W/(V_{n_{1}} \cap W)$
		for some $m \in \Z$.
		We lift the natural embedding
		$q^{m}L(\pi) \hookrightarrow
		q^{m}I(\pi)$
		to a morphism
		$\phi : W/(V_{n_1} \cap W) \to q^{m}I(\pi)$
		so that we have
		$X := \Ima(\phi) \subset q^{m}\pcstd(\pi) 
		\subset q^{m}I(\pi)$.
		We set
		$Y := \Ker(J(A') \twoheadrightarrow
		W/(V_{n_1} \cap W)
		 \overset{\phi}{\twoheadrightarrow} X)$
		and get an exact sequence:
		\begin{equation}
		0 \to Y \to J(A') \to X \to 0.
		\label{eq:1}
		\end{equation}
		For each $\sigma \in A'$,
		we
		apply the functor
		$\homo_{H}(\std(\sigma), -)$
		to (\ref{eq:1})
		to obtain an exact sequence
		$$
		\homo_{H}(\std(\sigma), J(A')) \to
		\homo_{H}(\std(\sigma), X) \to
		\ext^{1}_{H}(\std(\sigma), Y) \to 
		\ext^{1}_{H}(\std(\sigma), J(A')) = 0.
		$$
		Since $X \subset q^{m} \pcstd(\pi)$,
		Theorem \ref{Thm:BHBGG} implies that
		$\homo_{H}(\std(\sigma), X) = 0$ for 
		$\sigma \neq \pi$.
		Therefore we have
		$\ext^{1}_{H}(\std(\sigma), Y) = 0$ 
		for $\sigma \in A = A' \setminus \{ \pi \}$.
		When $\sigma = \pi$,
		we have
		$$
		\homo_{H}(\std(\pi), J(A')) \stackrel{a}{\to}
		\homo_{H}(\std(\pi), X) \to
		\ext^{1}_{H}(\std(\pi), Y) \to 0.
		$$
		Note that  
		the full subcategory 
		$H \gmod_{A'} \subset H \gmod$
		consisting of  
		modules $V$
		with $\supp(V) \subset A'$
		is also an affine highest weight category
		with the set of 
		standard objects $\{\std(\sigma) \mid 
		\sigma \in A' \}$
		(cf.\ \cite{affine} Proposition 5.16). 
		The map $a$ is surjective because 
		$\pi$ is maximal in $A'$ and 
		so $\std(\pi)$ is projective in 
		$H \gmod_{A'}.$
		Thus we have   
		$\ext_{H}^{1}(\std(\pi), Y)=0.$
		
		We have seen that
		$\ext^{1}_{H}(\std(\sigma), J(A')) =
		\ext^{1}_{H}(\std(\sigma), Y) = 0$
		for all $\sigma \in A'$.
		We can apply a standard argument
		as in \cite{Hum} Section 6.12 to find that 
		$\ext^{i}_{H}(\std(\sigma), J(A')) =
		\ext^{i}_{H}(\std(\sigma), Y) = 0$
		for all $\sigma \in A'$ and $i \geq 1$.
		By considering the long exact sequence induced from
		the short exact sequence (\ref{eq:1}),
		we conclude that
		$\ext_{H}^{i}(\std(\sigma), X) = 0$
		 for all $\sigma \in A'$
		and $i \geq 1$.
		Since $X$ is of finite length,
		we can apply Theorem
		\ref{Thm:char} (\ref{Thm:char:pcstd}) 
		to $X$ in $H \gmod_{A'}$ (see Remark \ref{Rem:char})
		to 
		find that $X = q^{m}\pcstd(\pi)$.

		We set $W_{0}(A)_{r}:=W(A')_{r}$ for all $r\geq 0 $ and
		define
		$$W_{1}(A)_{r} :=
		\begin{cases}
					W_{0}(A)_{r}
					& 0 \leq r < n_{1} ;\\
					W_{0}(A)_{r} \cap Z & n_1 \leq r ,
					\end{cases}
		$$
		where 
		$Z := \Ker(W \twoheadrightarrow W/W\cap V_{n_{1}} 	
		\twoheadrightarrow X=q^{m}\pcstd(\pi) ).$
		Then we have a filtration
		$V = W_{1}(A)_{0} \supset W_{1}(A)_{1} \supset \cdots$
		satisfying the conditions 
		$(\alpha)_{1}, (\beta)_{1}$ and $(\gamma)_{1}$.
		By iterating the same argument, 
		we prove the claim inductively.
		\end{proof}

		 To complete the proof of Lemma \ref{Lem:saturated},
		 we set
		 $\displaystyle W(A)_{r} := \lim_{i \to \infty}W_{i}(A)_{r}$
		 for each $r \in \Z_{\geq 0},$
		 which is well-defined by the condition $(\gamma)_{i}$.
		 Then the filtration
		 $V=W(A)_{0} \supset W(A)_{1} \supset \cdots$
		 satisfies the required condition
		 $(\alpha)$ and $(\beta)$.
 	\end{proof}
 	
 	\begin{proof}
	[Proof of Theorem \ref{Thm:char} (\ref{Thm:char:pcstd}) ]
		Assume that $V$ has a $\pcstd$-filtration,
		say $V = V_{0} \supset V_{1} \supset \cdots.$
		We have
		$\ext_{H}^{1}(\std(\pi), V_{n}/ V_{n+1})=0$ 
		for all $\pi \in \Pi$
		and $n \geq 0$ by Theorem \ref{Thm:BHBGG}.
		Then by
		Lemma \ref{Lem:limvan},
		we have $\ext_{H}^{1}(\std(\pi), V)=0$ 
		for all $\pi \in \Pi$.
		The converse implication is a special case of
		Lemma \ref{Lem:saturated} where
		$A=\varnothing$.
	\end{proof}

\subsection{Existence theorem}
\label{tilt}	
	We consider the following property for 
	a left Noetherian Laurentian algebra
	$H$.
	\begin{itemize}
		\item[$(\spadesuit)$]
		There is a central 
		$\Z_{\geq 0}$-graded
		subalgebra $Z \subset H_{\geq 0}$
		with $Z_{0} = \kk \cdot 1$ such that $H$
		is finitely generated as a $Z$-module.
	 \end{itemize}
	Note that $Z$ is a graded local ring with maximal ideal 
	$\m:=Z_{>0}$. For a module $M \in Z \gmod$, we have
	$\hd M = M/ \m M$, which is finite-dimensional over $\kk$. 
	If $H$ is a Noetherian Laurentian algebra
	with property $(\spadesuit)$, then
	$\ext_{H}^{i}(U,V) \in Z \gmod$ 
	for any $U,V \in H \gmod$ and $i \geq 0$.
	
	For a subset $A \subset \Pi$, we define 
	its closure by
	$\overline{A} := \{
	\pi \in \Pi \mid
	\pi \leq \sigma, \, \exists
	\sigma \in A
	\}.$

	\begin{Lem} \label{Lem:tilt}
	Let $H$ be an affine quasi-hereditary algebra
	with property $(\spadesuit)$.
		For each $\pi \in \Pi$, let 
	$\F_{\pi}(\std)$ (resp.\ $\T_{\pi}$) be the full subcategory 
	of $\F(\std)$ (resp.\ $\T$) 
	consisting of modules $V$
	satisfying that $(V:\std(\pi))=1$ and 
	$(V:\std(\sigma))=0$ for any $\sigma \not \leq \pi$.
	Then for each $V \in \F_{\pi}(\std)$,
	there is an embedding $V \hookrightarrow T$
	into some $T \in \T_{\pi}$. 
	 \end{Lem}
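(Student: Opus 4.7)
The plan is to construct $T \in \T_\pi$ containing $V$ by iteratively enlarging $V$ with finitely many copies of standard modules $\std(\sigma)$ for $\sigma < \pi$, so as to kill all the obstructions $\ext^1_H(\std(\sigma),-)$. Once these Ext groups vanish, Theorem \ref{Thm:char}(\ref{Thm:char:pcstd}) will automatically supply a $\pcstd$-filtration on the result, while the $\std$-filtration and the multiplicity conditions defining $\F_\pi(\std)$ will be preserved by construction.

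First I would record the automatic vanishings. Since $V \in \F_\pi(\std)$ admits a $\std$-filtration of \emph{finite} length (its length is at most $\sum_{\tau \leq \pi}(V:\std(\tau))<\infty$ because $\Pi$ is finite and each multiplicity is finite) with subquotients $\simeq \std(\tau)$ for $\tau \leq \pi$, Lemma \ref{Lem:ext}(\ref{Lem:ext:std}) immediately yields $\ext^1_H(\std(\sigma), V) = 0$ whenever $\sigma \not< \tau$ for every $\tau \leq \pi$ appearing; in particular this covers $\sigma = \pi$ and all $\sigma \not\leq \pi$. So only the weights $\sigma < \pi$ require treatment.

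Next I would fix an enumeration $\{\sigma \in \Pi \mid \sigma < \pi\} = \{\sigma_1, \ldots, \sigma_N\}$ compatible with the reverse of the partial order (so that $i < j \Rightarrow \sigma_i \not< \sigma_j$) and build $V = V^{(0)} \subset V^{(1)} \subset \cdots \subset V^{(N)} =: T$ by successive universal extensions. At step $k$, property $(\spadesuit)$ and the remark preceding the lemma guarantee that $\ext^1_H(\std(\sigma_k), V^{(k-1)})$ lies in $Z\gmod$; since $Z$ acts on $\std(\sigma_k)$ through the graded algebra map $Z \to B_{\sigma_k}$ induced by centrality, this Ext space is finitely generated as a graded $B_{\sigma_k}$-module, so we may select homogeneous generators $\xi_1, \ldots, \xi_r$ of degrees $a_1, \ldots, a_r$ and form the associated extension
\begin{equation*}
0 \to V^{(k-1)} \to V^{(k)} \to \bigoplus_{i=1}^{r} q^{a_i}\std(\sigma_k) \to 0.
\end{equation*}
Applying $\homo_H(\std(\sigma_k), -)$ and using Lemma \ref{Lem:ext}(\ref{Lem:ext:std}) to kill $\ext^1_H(\std(\sigma_k), \std(\sigma_k))$, the connecting homomorphism sends the tautological $B_{\sigma_k}$-generators of $\bigoplus_i q^{-a_i}B_{\sigma_k}$ to $\xi_1, \ldots, \xi_r$ and is therefore surjective; whence $\ext^1_H(\std(\sigma_k), V^{(k)}) = 0$. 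For $j < k$ the long exact sequence
\begin{equation*}
\ext^1_H(\std(\sigma_j), V^{(k-1)}) \to \ext^1_H(\std(\sigma_j), V^{(k)}) \to \bigoplus_{i}\ext^1_H(\std(\sigma_j), \std(\sigma_k))
\end{equation*}
has both outer terms vanishing—the left by induction, the right by Lemma \ref{Lem:ext}(\ref{Lem:ext:std}) since $\sigma_j \not< \sigma_k$—so all previously achieved vanishings are preserved.

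After the $N$-th step, $T = V^{(N)} \in \F(\std)$ satisfies $\ext^1_H(\std(\sigma), T) = 0$ for every $\sigma \in \Pi$, hence $T \in \F(\pcstd)$ by Theorem \ref{Thm:char}(\ref{Thm:char:pcstd}). Since only $\std(\sigma)$'s with $\sigma < \pi$ were added, $(T:\std(\pi)) = 1$ and $(T:\std(\tau)) = 0$ for $\tau \not\leq \pi$ are inherited from $V$, so $T \in \T_\pi$; the tautological inclusion $V = V^{(0)} \hookrightarrow V^{(N)} = T$ is the required embedding. I expect the principal obstacle to be precisely the finite-generation claim in the universal extension step, which is exactly where $(\spadesuit)$ enters: it converts an a priori infinite-dimensional Ext space into a finitely generated $B_{\sigma_k}$-module, keeping $V^{(k)}$ finitely generated over $H$. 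A secondary technical point is the judicious choice of linear extension, which guarantees that killing Ext at $\sigma_k$ never re-creates Ext at any earlier $\sigma_j$.
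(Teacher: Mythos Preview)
Your argument is correct and reaches the same conclusion as the paper, but the organisation is genuinely different. The paper argues by contradiction: assuming some $V\in\E_\pi(\std)$ has no tilting envelope in $\T_\pi$, it selects one with minimal $|\overline{\mathfrak D(V)}|$ and, among those, minimal $\dim_\kk\hd\ext^1_H(\std(\rho),V)$ for a maximal $\rho$ in that closure; a \emph{single} extension by one shifted copy $q^m\std(\rho)$ strictly decreases this head dimension while keeping $\overline{\mathfrak D}$ inside the same set, contradicting minimality. You instead give a direct construction via universal extensions: process the weights $\sigma<\pi$ in a reverse linear extension, and at each step glue on finitely many shifted copies of $\std(\sigma_k)$ indexed by $B_{\sigma_k}$-generators of the Ext group, killing $\ext^1_H(\std(\sigma_k),-)$ in one blow. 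Both proofs exploit $(\spadesuit)$ in the same essential way---finite generation of $\ext^1_H(\std(\sigma),V)$ over $Z$ (equivalently over $B_\sigma$, since the central $Z$-action factors through $B_\sigma$) is precisely what makes the procedure terminate---but your direct route avoids the double minimisation and makes the resulting tilting module explicit, at the cost of a slightly heavier bookkeeping check that previously killed Ext groups stay dead. One small point you leave implicit: the vanishing $\ext^1_H(\std(\sigma),V^{(k)})=0$ for $\sigma\not<\pi$ (not just for earlier $\sigma_j$) must also be propagated through each step; this follows by the same long exact sequence argument since $\sigma\not<\sigma_k$ whenever $\sigma\not<\pi$ and $\sigma_k<\pi$.
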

	 \begin{proof}
	 For a module $V \in H \gmod$,
	 we define a subset $\mathfrak{D}(V) \subset \Pi$ by
	 $
	 \mathfrak{D}(V) := \{ \sigma \in \Pi \mid
	 \ext_{H}^{1}(\std(\sigma), V) \neq 0
	 \}.
	 $
	 Assume to be the contrary 
	 to deduce a contradiction. 
	 Let $\E_{\pi}(\std) \subset \F_{\pi}(\std)$ be the full 
	 subcategory of modules $V \in \F_{\pi}(\std)$
	 which have no embeddings into any tilting modules
	 belonging to $\T_{\pi}$.
	 Let $A \subset \Pi$ be a subset
	 with the smallest size $|A|$ 
	 among those 
	 of the form 
	 $A = \overline{\mathfrak{D}(V)} $
	 for some $
	 V \in \E_{\pi}(\std) $.
	 Note that we have 
	 $\sigma < \pi$ for any
	 $\sigma \in A$ because of 
	 Lemma \ref{Lem:ext} (\ref{Lem:ext:std}).
	 If $A = \varnothing$,
	 there is a module $V \in \E_{\pi}(\std)$ with
	 the property
	 $\ext_{H}^{1}(\std(\sigma), V) = 0$ for
	 all $\sigma \in \Pi$.
	 Then $V \in \F(\pcstd)$ by Theorem
	 \ref{Thm:char} (\ref{Thm:char:pcstd}) and thus
	 $V$ already belongs to $\T_{\pi}$,
	 which contradicts to our assumption.
	 For the case $A \neq \varnothing$,
	 we fix a maximal element $\rho \in A$ and pick
	 a module $V \in \E_{\pi}(\std)$ so that
	 $\overline{\mathfrak{D}(V)} = A$
	 and  
	 $\dim_{\kk} (\hd \ext_{H}^{1}(\std(\rho), V))$
	 is as small as possible.
	 Let $\theta \in \Ext_{H}^{1}(q^{m}\std(\rho), V)$ 
	 be an element
	 such that
	 $\theta \not \equiv 0 
	 \bmod \m \ext_{H}^{1}(\std(\rho), V).$
	 Consider the extension
	 $
	 0 \to  V \to V' \to q^{m}\std(\rho) \to 0
	 $
	  corresponding to $\theta$.
	  Then we have $V' \in \F_{\pi}(\std)$ and for any
	  $\sigma \in \Pi$ we have an exact sequence
	  in $Z \gmod$:
	  $$
	  \homo_{H}(\std(\sigma), q^{m}\std(\rho))
	  \to \ext_{H}^{1}(\std(\sigma), V)
	  \to \ext_{H}^{1}(\std(\sigma), V')
	  \to \ext_{H}^{1}(\std(\sigma), q^{m}\std(\rho)).
	  $$
	  
	  First, we claim that $\overline{\mathfrak{D}(V')} \subset A$.
	  When $\sigma \not \in A$,
	  we have
	  $\ext_{H}^{1}(\std(\sigma), V)= 0$ and
	  we have
	  $\ext_{H}^{1}(\std(\sigma), \std(\rho))=0$
	  since $\sigma \not < \rho $ and 
	  Lemma \ref{Lem:ext} (\ref{Lem:ext:std}).
	  Therefore we conclude that 
	  $\ext_{H}^{1}(\std(\sigma), V')=0$
	  for any $\sigma \not \in A$,
	  from which the claim follows. 
	  
	  Next, we consider the case $\sigma = \rho$.
	  We have $\ext_{H}^{1}(\std(\rho), q^{m}\std(\rho))=0$
	  by Lemma \ref{Lem:ext} (\ref{Lem:ext:std}).
	  By taking $\m$-coinvariants,
	  we get an exact sequence:
	  $$
	  q^{-m} \hd (B_{\rho}) \to
	  \hd \ext_{H}^{1}(\std(\rho), V)
	  \to
	\hd \ext_{H}^{1}(\std(\rho), V')
	\to 0,
	  $$
	  where the most left arrow is non-zero because of
	  our choice of the extension $\theta$.
	  Therefore, we have
	  $\dim_{\kk} (\hd \ext_{H}^{1}(\std(\rho), V'))
	  <
	  \dim_{\kk} (\hd \ext_{H}^{1}(\std(\rho), V)).$
	  By the minimality assumption,
	  we have $V' \not \in \E_{\pi}(\std).$
	  Thus there exists an embedding 
	  $V' \to T'$ into some $T' \in \T_{\pi}$.
	  Since $V$ is a submodule of $V'$, this contradicts to
	  our assumption.
	 \end{proof}

	 \begin{Thm} \label{Thm:tilt}
	 Let $H$ be an affine quasi-hereditary algebra
	with property $(\spadesuit)$.
	\begin{enumerate}
		\item \label{Thm:tilt:indec}
		For each $\pi \in \Pi$,
		there exists an indecomposable tilting module
		$T(\pi) \in H \gmod$ such that
		$(T(\pi):\std(\pi))=1$
		and $(T(\pi):\std(\sigma))=0$ 
		for any $\sigma \not \leq \pi$.
		Such $T(\pi)$ is unique up to isomorphisms
		and grading shifts.
		\item \label{Thm:tilt:local}
		For each $\pi \in \Pi$, the algebra
		$\eend_{H}(T(\pi))$ is a local ring with quotient 
		$\cong \kk$;
		\item \label{Thm:tilt:smd}
		Every tilting module is a finite direct sum 
		of the modules $q^{n}T(\pi)$
		for some $\pi \in \Pi$ and $n \in \Z$.
	 \end{enumerate}
	 \end{Thm}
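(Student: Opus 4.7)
Since $\std(\pi)$ itself lies in $\F_\pi(\std)$, Lemma \ref{Lem:tilt} provides an embedding $\std(\pi)\hookrightarrow T$ for some $T\in\T_\pi$. Property $(\spadesuit)$ makes each $\eend_H(V)$ a finitely generated graded $Z$-module; when $V$ is finitely generated and bounded below, $\eend_H(V)$ is supported in nonnegative degrees with finite-dimensional degree-zero part $\End_{H\gmod}(V)$, so $H\gmod$ is a Krull--Schmidt category. Decomposing $T$ into indecomposable summands, exactly one summand $T(\pi)$ satisfies $(T(\pi):\std(\pi))=1$ (the others have multiplicity zero), and it inherits $(T(\pi):\std(\sigma))=0$ for $\sigma\not\leq\pi$ from $T$; hence $T(\pi)\in\T_\pi$, establishing existence in (\ref{Thm:tilt:indec}).

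For (\ref{Thm:tilt:local}), indecomposability of $T(\pi)$ combined with Fitting's lemma applied in the finite-dimensional algebra $\End_{H\gmod}(T(\pi))$ shows the latter is local; the positive-degree part $\eend_H(T(\pi))_{>0}$ lies in the graded Jacobson radical, so $\eend_H(T(\pi))$ itself is local. To identify the residue field with $\kk$, apply $\homo_H(-,\pcstd(\pi))$ to the $\std$-filtration of $T(\pi)$: by Theorem \ref{Thm:BHBGG} the resulting graded vector space is $1$-dimensional over $\kk$, concentrated at a single degree. The right action of $\eend_H(T(\pi))$ on this $1$-dimensional space produces a graded algebra surjection $\eend_H(T(\pi))\twoheadrightarrow\kk$ whose kernel must be the unique maximal graded two-sided ideal, i.e. the radical.

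For uniqueness in (\ref{Thm:tilt:indec}), let $T'(\pi)\in\T_\pi$ be another indecomposable. By Lemma \ref{Lem:ext} (\ref{Lem:ext:std}) (which with variables swapped gives $\Ext^1_H(\std(\pi),\std(\sigma))=0$ whenever $\sigma<\pi$), the $\std$-filtrations can be rearranged to put $\std(\pi)$ at the bottom layer, giving embeddings $\std(\pi)\hookrightarrow T(\pi)$ and $\std(\pi)\hookrightarrow T'(\pi)$. Since $T(\pi)/\std(\pi)$ is $\std$-filtered by $\std(\sigma)$ with $\sigma<\pi$ and $T'(\pi)\in\F(\pcstd)$, Theorem \ref{Thm:char} (\ref{Thm:char:pcstd}) yields $\Ext^1_H(\std(\sigma),T'(\pi))=0$, so the embedding $\std(\pi)\hookrightarrow T'(\pi)$ extends to $\varphi:T(\pi)\to T'(\pi)$; symmetrically one constructs $\psi:T'(\pi)\to T(\pi)$. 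The composition $\psi\varphi\in\eend_H(T(\pi))$ restricts to the identity on the embedded $\std(\pi)$, so acts nontrivially on the $1$-dimensional $\homo_H(T(\pi),\pcstd(\pi))$; by (\ref{Thm:tilt:local}) it is a unit in $\eend_H(T(\pi))$, whence $\varphi$ is an isomorphism.

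Finally, for (\ref{Thm:tilt:smd}), Krull--Schmidt decomposes any tilting $T$ into indecomposable summands, each of which is again tilting (both filterability conditions pass to direct summands). It remains to show every indecomposable tilting $T'$ is $\simeq q^mT(\pi)$ for some $\pi$ and $m$: picking $\pi$ maximal in $\supp T'$, the same rearrangement yields $q^m\std(\pi)\hookrightarrow T'$, which extends via Theorem \ref{Thm:char} (\ref{Thm:char:pcstd}) to a map $\varphi:q^mT(\pi)\to T'$, and $\varphi$ is then shown to be an isomorphism by a standard Krull--Schmidt argument comparing $\std$-multiplicities and invoking locality of the endomorphism rings. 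The principal obstacle throughout is (\ref{Thm:tilt:local}), in particular identifying the residue field of $\eend_H(T(\pi))$ as exactly $\kk$; the key input there is the $1$-dimensional action on $\homo_H(T(\pi),\pcstd(\pi))$, after which uniqueness and decomposition follow by routine Hom-extension and Krull--Schmidt manipulations.
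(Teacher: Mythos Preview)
Your overall strategy matches the paper's: use Lemma~\ref{Lem:tilt} for existence, identify $\homo_H(T(\pi),\pcstd(\pi))\cong\kk$ as the key to locality of $\eend_H(T(\pi))$, and then lift maps through $\Ext^1$-vanishing to establish uniqueness and the Krull--Schmidt decomposition. One genuine slip: the claim that $\eend_H(V)$ is ``supported in nonnegative degrees'' for $V$ finitely generated and bounded below is false in general (consider $V=L\oplus qL$), so your deduction that $\eend_H(T(\pi))_{>0}$ lies in the radical, and hence that $\eend_H(T(\pi))$ is local, has a gap. The paper avoids this by working directly with the ideal $I_\pi=\{f\mid f(T(\pi))\subset U(\pi)\}$, where $U(\pi)$ is the unique submodule with $T(\pi)/U(\pi)\cong\pcstd(\pi)$, and then applying a graded Fitting argument to show $I_\pi$ is the Jacobson radical with quotient $\kk$; your later use of the $1$-dimensional action on $\homo_H(T(\pi),\pcstd(\pi))$ is exactly this idea, so the repair is already implicit in what you wrote.

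A minor stylistic difference worth noting: for uniqueness and part~(\ref{Thm:tilt:smd}) you build maps by extending the embedding $\std(\pi)\hookrightarrow T(\pi)$ through $\Ext^1$-vanishing on the $\std$-side, whereas the paper works dually, lifting the surjection $T(\pi)\twoheadrightarrow\pcstd(\pi)$ through $\Ext^1$-vanishing on the $\pcstd$-side. Both routes produce $\phi,\psi$ with $\psi\phi$ a unit modulo the radical; neither is preferable, but the paper's version makes the role of $I_\pi$ slightly more transparent.
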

	 \begin{proof}
	 We apply Lemma \ref{Lem:tilt} to $\std(\pi)$ to find
	 an embedding $\std(\pi)
	 \hookrightarrow T$ for some 
	 $T \in \T$ which satisfies
	 $(T:\std(\pi))=1$
	and $(T:\std(\sigma))=0$ for any $\sigma \not \leq \pi$.
	 By Theorem \ref{Thm:char},
	 the indecomposable direct summand $T(\pi)$ of $T$
	 with $(T(\pi):\std(\pi))=1$
	 is also a tilting module and 
	 satisfies the required property
	 in (\ref{Thm:tilt:indec}).
	 We prove the uniqueness of $T(\pi)$ later.
	 
	 Let us prove the assertion (\ref{Thm:tilt:local}).
	 By Lemma \ref{Lem:ext} (\ref{Lem:ext:std}), 
	 we have an embedding
	 $\std(\pi) \hookrightarrow q^{m}T(\pi)$
	 for some $m \in \Z$ such that
	 $T(\pi) / \std(\pi)$ also has a $\std$-filtration.
	 By grading shift, we may normalize so that $m=0$.
	 Thus
	 we have a short exact sequence:
	 $0 \to \std(\pi) \to T(\pi) \to Q(\pi) \to 0,$
	 where $Q(\pi)$ has a
	 $\std$-filtration
	 whose subquotients are of the form $\simeq \std(\sigma)$
	 for some $\sigma < \pi$.
	 We apply the functor $\homo_{H}(-, \pcstd(\pi))$ to get:
	 $$
	 \homo_{H}(Q(\pi), \pcstd(\pi))
	 \to \homo_{H}(T(\pi), \pcstd(\pi)) 
	 \to \homo_{H}(\std(\pi), \pcstd(\pi)) \to
	 \ext_{H}^{1}(Q(\pi), \pcstd(\pi)).
	 $$
	 By Theorem \ref{Thm:BHBGG}, the first and the fourth terms
	 are zero.
	 Therefore we have
	 $\homo_{H}(T(\pi), \pcstd(\pi))
	 \cong \homo_{H}(\std(\pi), \pcstd(\pi)) \cong \kk$.
	 This 
	 and Lemma \ref{Lem:ext} (\ref{Lem:ext:pcstd})
	 show that there exists a unique submodule 
	 $U(\pi) \subset T(\pi)$
	 such that $T(\pi) / U(\pi) \cong \pcstd(\pi).$
	 Then, by the graded version of 
	 Fitting's lemma, we can see that
	 $I_{\pi} := \{ f \in \eend_{H}(T(\pi))
	 \mid f(T(\pi)) \subset U(\pi) \} $
	 is the Jacobson radical of $\eend_{H}(T(\pi)) $
	 with $\eend_{H}(T(\pi)) /I_{\pi} \cong \kk.$
	 In particular, $\eend_{H}(T(\pi))$ is local.

	 To prove the assertion (\ref{Thm:tilt:smd})
	 and the uniqueness of $T(\pi)$'s,
	 it is enough to show that,
	 for each tilting module $T$, there is a
	 direct sum decomposition
	 $T \cong T' \oplus q^{m}T(\pi)$
	 for some $\pi \in \Pi, \, m \in \Z$
	 and $T' \in \T$.
	 Let $\pi $ be a maximal element in $\supp(T).$
	 Then Lemma \ref{Lem:ext} (\ref{Lem:ext:pcstd}) shows that 
	 there is an exact sequence:
	 $
	 0 \to U
	 \to
	 T \to q^{m} \pcstd(\pi) \to 0$
	 with $U \in \F(\pcstd)$.
	 Then we lift the
	 surjection $q^{m}T(\pi) \to q^{m}\pcstd(\pi)$
	 to a map $\phi : q^{m}T(\pi) \to T$.
	 By a similar argument,
	 we find a map $\psi : T \to q^{m}T(\pi)$
	 of the converse direction
	 which satisfies that
	 $\psi \circ \phi \equiv \mathrm{id} \bmod I_{\pi}$.
	 Then from the assertion 
	 (\ref{Thm:tilt:local})
	 proved in the previous  paragraph,
	 we see that $\psi \circ \phi$ is an isomorphism and
	 thus $q^{m}T(\pi)$ is a direct summand of $T$.
	 The complement $T/q^{m} T(\pi)$ is also tilting
	 by Theorem \ref{Thm:char}.
	 \end{proof}

	 \begin{Prop} \label{Prop:tiltres}
	 	 Let $H$ be an affine quasi-hereditary algebra
	with property $(\spadesuit)$.
	Then a module $V \in H \gmod$ belongs to $\F(\std)$
	if and only if $V$ has a finite right resolution by tilting modules.
	 \end{Prop}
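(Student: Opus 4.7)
The plan is to handle the two implications separately: the converse is a routine dimension-shifting argument, while the forward direction rests on a key embedding lemma followed by a global-dimension termination step.

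For the ``if'' direction, suppose $0 \to V \to T^{0} \to T^{1} \to \cdots \to T^{n} \to 0$ is a finite right tilting resolution. By induction on the length of the (necessarily finite) $\std$-filtration of each $T^{i}$ together with Theorem \ref{Thm:BHBGG}, one obtains $\ext_{H}^{j}(T^{i}, \pcstd(\pi)) = 0$ for every $j \geq 1$ and $\pi \in \Pi$. Breaking the resolution into short exact sequences $0 \to K^{i} \to T^{i} \to K^{i+1} \to 0$ with $K^{0} = V$ and $K^{n+1} = 0$, iterated application of the long exact sequence of $\Ext_{H}(-, \pcstd(\pi))$ produces an injection $\ext_{H}^{1}(V, \pcstd(\pi)) \hookrightarrow \ext_{H}^{n+2}(0, \pcstd(\pi)) = 0$, whence $V \in \F(\std)$ by Theorem \ref{Thm:char}(\ref{Thm:char:std}).

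For the forward direction, the central step is the following embedding lemma: every $V \in \F(\std)$ admits an embedding $V \hookrightarrow T^{0}$ into a tilting module with $T^{0}/V \in \F(\std)$. I would establish this by induction on the length of a $\std$-filtration of $V$. The base case $V \simeq q^{a}\std(\pi)$ is handled by the proof of Theorem \ref{Thm:tilt}, which already produces an embedding $\std(\pi) \hookrightarrow T(\pi)$ whose cokernel is $\std$-filtered. For the inductive step, present $V$ as an extension $0 \to V' \to V \to q^{m}\std(\pi) \to 0$ with $V'$ of strictly shorter filtration, apply induction to get $V' \hookrightarrow T'$ with $T'/V' \in \F(\std)$, and extend the composite $V' \hookrightarrow T'$ to a map $\tilde{f} \colon V \to T'$ using $\ext_{H}^{1}(q^{m}\std(\pi), T') = 0$, which follows from $T' \in \F(\pcstd)$ together with Theorem \ref{Thm:BHBGG}. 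Pair $\tilde{f}$ with the composition $V \twoheadrightarrow q^{m}\std(\pi) \hookrightarrow q^{m}T(\pi)$ to obtain an injection $\phi \colon V \hookrightarrow T' \oplus q^{m}T(\pi)$; a $3 \times 3$ diagram chase then yields a short exact sequence $0 \to T'/V' \to (T' \oplus q^{m}T(\pi))/\phi(V) \to q^{m}T(\pi)/q^{m}\std(\pi) \to 0$ with both outer terms in $\F(\std)$, and the middle term therefore lies in $\F(\std)$ by closure of $\F(\std)$ under extensions.

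Once the embedding lemma is in hand, I would iterate to build an exact complex $V \hookrightarrow T^{0} \to T^{1} \to \cdots$ whose syzygies $V^{j}$ all lie in $\F(\std)$. Applying $\Ext_{H}(\std(\sigma), -)$ to each short exact sequence $0 \to V^{j} \to T^{j} \to V^{j+1} \to 0$ and using the vanishing $\ext_{H}^{\geq 1}(\std(\sigma), T^{j}) = 0$, dimension shifting gives $\ext_{H}^{1}(\std(\sigma), V^{k}) \cong \ext_{H}^{k+1}(\std(\sigma), V)$. By Remark \ref{Rem:gldim}, $H$ has finite global dimension $d$, so for $k \geq d$ the right-hand side vanishes for every $\sigma \in \Pi$; Theorem \ref{Thm:char}(\ref{Thm:char:pcstd}) then forces $V^{k}$ itself to be tilting, terminating the resolution at length at most $d$.

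The main obstacle is the inductive step of the embedding lemma: injectivity of $\phi$ is immediate from the fact that $V \to q^{m}\std(\pi)$ has kernel $V'$ on which $\tilde{f}$ restricts to the inclusion into $T'$, but identifying the cokernel as an extension of two $\std$-filtered modules requires a careful $3 \times 3$ diagram chase. A secondary technicality is justifying $\ext_{H}^{\geq 1}(\std(\sigma), T) = 0$ for every tilting $T$, since the $\pcstd$-filtration of $T$ need not be manifestly finite; this is handled via Lemma \ref{Lem:limvan} applied to a separated $\pcstd$-filtration whose natural map to its inverse limit is an isomorphism in the Noetherian Laurentian setting.
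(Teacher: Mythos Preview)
Your proof is correct but follows a different route from the paper's. For the forward direction you prove an \emph{embedding lemma} by induction on the length of a $\std$-filtration, iterate it to produce an a priori infinite coresolution, and then invoke finite global dimension to force a syzygy $V^{k}$ into $\F(\pcstd)$, terminating the process. The paper instead runs a single induction on the size of the saturated support $\overline{\supp(V)}$: for a maximal $\pi$ one peels off a submodule $U$ which is a direct sum of shifts of $\std(\pi)$, so that $V/U$ has strictly smaller support closure; resolutions of $V/U$ and of $U$ (the latter built from the short exact sequence $0 \to \std(\pi) \to T(\pi) \to Q(\pi) \to 0$, again with smaller support) are then spliced by a horseshoe argument using $\ext^{1}_{H}(V/U, T'') = 0$.

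What each approach buys: the paper's construction is entirely combinatorial and terminates automatically with the induction on $|\overline{\supp(V)}|$, so finite global dimension never enters the argument; it also gives a bound on the resolution length in terms of the poset $\Pi$. Your approach isolates a clean reusable statement (the embedding lemma) and makes explicit use of Remark~\ref{Rem:gldim}, which is a natural place for that fact to appear. Both rely on the same underlying ingredients, namely Theorem~\ref{Thm:char} and the short exact sequence $0 \to \std(\pi) \to T(\pi) \to Q(\pi) \to 0$ from Theorem~\ref{Thm:tilt}; the difference is purely in how the induction is organised. Your handling of the secondary technicality (the vanishing $\ext^{\geq 1}_{H}(\std(\sigma), T) = 0$ via Lemma~\ref{Lem:limvan}) is exactly the justification the paper uses in the proof of Theorem~\ref{Thm:char}(\ref{Thm:char:pcstd}).
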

	 \begin{proof}
	 Suppose that $V \in \F(\std).$
	 To prove that $V$ has a right tilting resolution,
	 we proceed by induction on
	 the size of the set $\overline{\supp(V)}$.
	 The case where $\supp(V) = \varnothing$
	 is trivial.
	 Let $\pi \in \overline{\supp(V)}$
	 be a maximal element.
	 By Lemma \ref{Lem:ext} (\ref{Lem:ext:std}),
	 we have a submodule
	 $U \in V$ which is a direct sum of
	 some grading shifts of $\std(\pi)$
	 and the quotient $V/U$ has 
	 a $\std$-filtration with
	 $(V/U : \std(\pi)) =0.$
	 Then we have $\overline{\supp(V/U)}
	 \subset \overline{\supp(V)} \setminus \{ \pi \}.$
	 By induction hypothesis,
	 we have a right tilting resolution
	$V/U \to T'_{0} \to T'_{1} \to \cdots$
	and get a map 
	$\phi : V \twoheadrightarrow V/U \to T'_{\bullet}.$
	On the other hand, Theorem
	\ref{Thm:tilt} shows that there is an exact sequence
	$0 \to \std(\pi) \to T(\pi ) \to Q(\pi) \to 0,$
	where $Q(\pi) \in \F(\std)$ and 
	$(Q(\pi):\std(\sigma)) = 0$ for any
	$\sigma \not < \pi $.
	Because the module $Q(\pi)$
	has a tilting resolution by the induction hypothesis,
	the standard module
	$\std(\pi)$ has a tilting resolution.
	Thus $U$ has a tilting resolution 
	$U \to T''_{0} \to T''_{1} \to \cdots.$
	Since $\ext_{H}^{1}(V/U, T''_{0}) = 0$ we can lift
	the map
	$U \to T''_{\bullet}$ to a map $\psi : V \to T''_{\bullet}$.
	Therefore we have a map
	$\phi \oplus \psi  : V \to T'_{\bullet} \oplus T''_{\bullet}$,
	which gives a tilting resolution of $V$.

	The other implication is a direct consequence
	of Theorem \ref{Thm:char} (\ref{Thm:char:std}).
	 \end{proof}

\subsection{A criterion for categorical equivalence}
\label{equiv}

	For an additive (resp.\ abelian)
	category $\Cc$, we denote its 
	bounded homotopy (resp.\ derived)
	category by
	$K^{b}(\Cc)$ (resp.\ $D^{b}(\Cc)$).
	
	\begin{Lem} \label{Lem:equiv}
	Let $H$ be an affine quasi-hereditary algebra
	with property $(\spadesuit)$
	and $\CC := H \gmod.$
	Then the natural functor
	$K^{b}(\T) \to D^{b}(\CC)$,
	which is the composition of
	the natural embedding 
	$K^{b}(\T) \hookrightarrow K^{b}(\CC)$
	and the quotient $K^{b}(\CC)\to D^{b}(\CC)$,
	gives an equivalence of graded triangulated categories. 
	\end{Lem}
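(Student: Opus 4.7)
The plan is to verify the two standard conditions making the natural functor $K^b(\T) \to D^b(\CC)$ an equivalence: higher $\Ext$-vanishing among tilting modules (to deduce full faithfulness) and the existence of bounded tilting resolutions for arbitrary objects (to deduce essential surjectivity). Both rely on what has been established so far, namely Theorem \ref{Thm:BHBGG}, Proposition \ref{Prop:tiltres}, and the finite global dimension of $H$ afforded by Remark \ref{Rem:gldim}.

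For full faithfulness, I would first show that $\ext_H^i(T, T') = 0$ for all $i \geq 1$ and all tilting modules $T, T'$. To this end, fix $T \in \F(\std)$; since the $\std$-filtration of $T$ has finite length, a standard long exact sequence argument starting from Theorem \ref{Thm:BHBGG} gives $\ext_H^i(T, \pcstd(\pi)) = 0$ for every $i \geq 1$ and $\pi \in \Pi$. Then, given $T' \in \F(\pcstd)$ with a separated $\pcstd$-filtration $T' = T'_0 \supset T'_1 \supset \cdots$ having finite-length subquotients (by Theorem \ref{Thm:BHBGG}), the Noetherian Laurentian structure of $\CC$ guarantees $T' \cong \varprojlim T'/T'_n$, so Lemma \ref{Lem:limvan} upgrades the previous vanishing to $\ext_H^i(T, T') = 0$ for all $i \geq 1$. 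A standard induction on the number of non-zero terms of bounded complexes then promotes this object-level vanishing to the full faithfulness of $K^b(\T) \to D^b(\CC)$.

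For essential surjectivity, I would show that every $V \in \CC$ is quasi-isomorphic to a bounded complex of tilting modules. Remark \ref{Rem:gldim} provides a bounded projective resolution $P_\bullet \to V$, each $P_i$ lies in $\F(\std)$ by Definition \ref{Def:hwc}(\ref{Def:hwc:str}), and Proposition \ref{Prop:tiltres} supplies a finite right tilting coresolution $P_i \to T_i^\bullet$. A Cartan--Eilenberg-style assembly of these into a bicomplex, followed by taking the total complex, yields a bounded complex of tiltings quasi-isomorphic to $V$. Since $\CC$ generates $D^b(\CC)$ under shifts and triangles, this gives essential surjectivity. Compatibility with the grading shift functors is automatic from the construction, so the resulting equivalence is graded.

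The step I expect to be trickiest is the second half of the $\Ext$-vanishing argument, where the $\pcstd$-filtration of $T'$ may be infinite in length. Naive finite induction fails there, so one must exploit the separatedness of the filtration together with the Laurentian (and hence degreewise-Artinian) nature of $T'$ to justify the inverse-limit hypothesis required by Lemma \ref{Lem:limvan}.
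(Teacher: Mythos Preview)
Your proposal is correct and follows essentially the same strategy as the paper. The paper's own proof is more terse: it simply observes that finite global dimension (Remark~\ref{Rem:gldim}), the homological characterization of $\T$ (Theorem~\ref{Thm:char}), the existence of the $T(\pi)$'s (Theorem~\ref{Thm:tilt}), and the finite tilting coresolutions (Proposition~\ref{Prop:tiltres}) together verify the hypotheses of Happel's Lemmas~1.1 and~1.5 in \cite{Hap} for the module $\bigoplus_{\pi\in\Pi}T(\pi)$, whereas you unpack exactly those two Happel lemmas by hand (Ext-vanishing for full faithfulness, Cartan--Eilenberg assembly for essential surjectivity).
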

	\begin{proof}
	Note that every affine quasi-hereditary algebra 
	has a finite global dimension by Remark
	\ref{Rem:gldim}.
	Thanks to Theorem
	\ref{Thm:char}, \ref{Thm:tilt} and 
	Proposition \ref{Prop:tiltres},
	we can apply the graded version of
	\cite{Hap} Lemma 1.1 and 1.5 to 
	the module $\bigoplus_{\pi \in \Pi}T(\pi).$  
	\end{proof}
	
	For each $i=1,2$,
	let $H_{i}$ be an affine quasi-hereditary algebra 
	with property $(\spadesuit)$ and 
	$\CC_{i} := H_{i} \gmod$ be 
	the associated affine highest weight category,
	whose index poset is denoted by $\Pi_{i}$. 
	For each $\pi \in \Pi_{i}$,
	we denote by $\std_{i}(\pi)$, (resp.$\pcstd_{i}(\pi), 
	T_{i}(\pi)$)
	the corresponding 
	standard (resp. proper costandard,
	indecomposable tilting) module in $\CC_{i}$.
	We denote by $\T_{i} := \F(\std_{i}) \cap \F(\pcstd_{i})$
	the additive full subcategory of tilting modules.

	\begin{Thm} \label{Thm:equiv}
	Under the above notation,
	assume that there is a graded exact functor $\Phi :
	\CC_{1} \to \CC_{2}$ and 
	a bijection $\phi: \Pi_{1} \to \Pi_{2}$ preserving 
	the partial orderings
	such that 
	we have $
	\Phi(\std_{1}(\pi)) \cong \std_{2}(\phi(\pi))$
	and
	$\Phi(\pcstd_{1}(\pi)) \cong \pcstd_{2}(\phi(\pi))
	$
	for all $\pi \in \Pi_{1}$.
	Then the functor $\Phi$
	gives an equivalence
	of graded categories
	$\Phi : \CC_{1 } \simeq \CC_{2}$.
	\end{Thm}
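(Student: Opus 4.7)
The plan is to factor the proof through the category of tilting modules: first show $\Phi$ induces an additive equivalence $\T_1 \simeq \T_2$, then use Lemma \ref{Lem:equiv} to lift this to an equivalence of bounded derived categories, which restricts to the hearts by exactness of $\Phi$.

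Preservation of tilting modules is straightforward. Each $T_1(\pi)$ carries \emph{finite} $\std_1$- and $\pcstd_1$-filtrations, since by Theorem \ref{Thm:tilt} all multiplicities are finite integers. Exactness of $\Phi$ together with $\Phi(\std_1(\sigma)) \cong \std_2(\phi(\sigma))$ and $\Phi(\pcstd_1(\sigma)) \cong \pcstd_2(\phi(\sigma))$ then implies $\Phi T_1(\pi) \in \T_2$; a multiplicity comparison using Theorem \ref{Thm:tilt}(\ref{Thm:tilt:smd}) and the order-preservation of $\phi$ forces $\Phi T_1(\pi) \cong q^{n_\pi} T_2(\phi(\pi))$ for some $n_\pi \in \Z$, giving essential surjectivity up to grading shift.

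For fully faithfulness on $\T_1$ I propose a double devissage. Given $T, T' \in \T_1$, Theorem \ref{Thm:char}(\ref{Thm:char:pcstd}) gives $\ext_{H_1}^{1}(\std_1(\sigma), T') = 0$ and likewise on the $\CC_2$ side. Inducting on the length of the $\std_1$-filtration of $T$ and applying the five lemma, I reduce to $T = q^a \std_1(\sigma)$; a symmetric reduction on the $\pcstd_1$-filtration of $T'$ yields the base case $T = q^a \std_1(\sigma)$, $T' = q^b \pcstd_1(\pi)$. By Theorem \ref{Thm:BHBGG} both $\homo$ spaces are one-dimensional when $\sigma = \pi$ and vanish otherwise, so the only nontrivial check is that $\Phi$ sends the unique nonzero $f : \std_1(\pi) \to \pcstd_1(\pi)$ to a nonzero map. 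Since $f$ factors as $\std_1(\pi) \twoheadrightarrow L_1(\pi) \hookrightarrow \pcstd_1(\pi)$, this reduces to the key subclaim $\Phi L_1(\pi) \neq 0$, which I expect to be the main obstacle.

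To establish the subclaim I argue by contradiction: if $\Phi L_1(\pi) = 0$, applying $\Phi$ to $0 \to L_1(\pi) \to \pcstd_1(\pi) \to \pcstd_1(\pi)/L_1(\pi) \to 0$ yields $\pcstd_2(\phi(\pi)) \cong \Phi(\pcstd_1(\pi)/L_1(\pi))$. The composition factors of $\pcstd_1(\pi)/L_1(\pi)$ are all of the form $L_1(\sigma)$ with $\sigma < \pi$, so the composition factors of the right-hand side lie among those of $\Phi L_1(\sigma)$ for $\sigma < \pi$. Each $\Phi L_1(\sigma)$ is a quotient of $\std_2(\phi(\sigma))$, so its composition factors are $L_2(\tau)$ with $\tau \leq \phi(\sigma) < \phi(\pi)$. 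But $\pcstd_2(\phi(\pi))$ contains $L_2(\phi(\pi))$ in its socle, a contradiction. Once this is in hand, $\Phi$ restricts to a graded additive equivalence $\T_1 \simeq \T_2$, which extends termwise to $K^b(\T_1) \simeq K^b(\T_2)$; via Lemma \ref{Lem:equiv} this becomes a graded triangulated equivalence $D^b(\CC_1) \simeq D^b(\CC_2)$ agreeing with the derived extension of $\Phi$, and exactness preserves the standard hearts, yielding the desired graded equivalence $\Phi : \CC_1 \simeq \CC_2$.
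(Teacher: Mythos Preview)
Your overall strategy matches the paper's: show $\Phi$ restricts to an equivalence $\T_1 \simeq \T_2$, then use Lemma~\ref{Lem:equiv} to pass through $K^b(\T_i)\simeq D^b(\CC_i)$ and restrict to hearts. Two points need adjustment.

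The claim that a ``multiplicity comparison'' alone forces $\Phi T_1(\pi)\cong q^{n_\pi}T_2(\phi(\pi))$ is not yet justified: the multiplicities only show that $T_2(\phi(\pi))$ occurs once and that no $T_2(\tau)$ with $\tau\not\leq\phi(\pi)$ occurs, but they do not exclude extra summands $T_2(\tau)$ with $\tau<\phi(\pi)$. The paper handles this by first proving full faithfulness (its Lemma~\ref{Lem:ff}), so that $\eend_{\CC_2}(\Phi T_1(\pi))\cong\eend_{\CC_1}(T_1(\pi))$ is local by Theorem~\ref{Thm:tilt}(\ref{Thm:tilt:local}), whence $\Phi T_1(\pi)$ is indecomposable. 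Your later devissage supplies exactly this ingredient, so reordering the argument repairs it. Second, the $\pcstd_1$-filtration on a tilting module $T'$ is in general \emph{infinite} in the affine setting (tilting modules typically have infinite length), so your ``symmetric reduction'' is not a finite induction; you must establish the isomorphism on each finite quotient $T'/T'_n$ and then pass to the projective limit via Lemma~\ref{Lem:NLC}(\ref{Lem:NLC:lim}), exactly as the paper does in proving Lemma~\ref{Lem:ff}. On the positive side, your explicit argument that $\Phi L_1(\pi)\neq 0$ is precisely the point the paper's terse ``by Theorem~\ref{Thm:BHBGG} and the five lemma'' leaves to the reader, and your proof of it is correct.
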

	First we prove the following lemma:

	\begin{Lem} \label{Lem:ff}
	Under the same assumption as in Theorem \ref{Thm:equiv},
	we have a natural isomorphism
	$\homo_{\CC_{1}}(U,V) \cong
	\homo_{\CC_{2}}(\Phi(U), \Phi(V)) $
	for any $U \in \F(\std_{1}), V \in \F(\pcstd_{1})$.
	In particular, we have a natural graded isomorphism
	$\eend_{\CC_{1}}(T) \cong \eend_{\CC_{2}}(\Phi(T))$
	for all $T \in \T_{1}.$
	\end{Lem}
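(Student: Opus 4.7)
The plan is to prove that the natural map $\alpha_{U,V}\colon \homo_{\CC_1}(U,V)\to \homo_{\CC_2}(\Phi(U),\Phi(V))$, $f\mapsto \Phi(f)$, is an isomorphism by progressively enlarging the class of admissible pairs $(U,V)$. First I would establish that $\Phi$ is faithful on objects by showing $\Phi(L_1(\pi))\neq 0$ for every $\pi\in\Pi_1$ (writing $L_i(\pi)$ for the simple quotient of $\std_i(\pi)$), via induction along $\leq$. For minimal $\pi$, $L_1(\pi)=\pcstd_1(\pi)$ gives $\Phi(L_1(\pi))=\pcstd_2(\phi(\pi))\neq 0$. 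Inductively, each $\Phi(L_1(\sigma))$ with $\sigma<\pi$ embeds into $\pcstd_2(\phi(\sigma))$ and so has composition factors only of the form $L_2(\tau)$ with $\tau\leq\phi(\sigma)<\phi(\pi)$; if $\Phi(L_1(\pi))=0$, then applying $\Phi$ to $0\to L_1(\pi)\to\pcstd_1(\pi)\to\pcstd_1(\pi)/L_1(\pi)\to 0$ would give $\Phi(\pcstd_1(\pi)/L_1(\pi))\cong\pcstd_2(\phi(\pi))$, but the left-hand side cannot have the socle $L_2(\phi(\pi))$ of the right as composition factor, a contradiction. Faithfulness on any nonzero $M\in\CC_1$ follows from $\hd M\neq 0$ in the Laurentian Noetherian category $\CC_1$ together with $\Phi$ sending a nonzero semisimple to a nonzero semisimple, and in particular $\alpha_{U,V}$ is always injective.

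With faithfulness in hand, the base case $(U,V)=(\std_1(\sigma),\pcstd_1(\pi))$ is immediate from Theorem \ref{Thm:BHBGG}: both hom-spaces equal $\delta_{\sigma,\pi}\kk$ concentrated in degree zero, and for $\sigma=\pi$ the canonical map $\std_1(\pi)\twoheadrightarrow L_1(\pi)\hookrightarrow\pcstd_1(\pi)$ is carried by $\Phi$ to a morphism of image $\Phi(L_1(\pi))\neq 0$. I then extend to $V\in\F(\pcstd_1)$ of finite $\pcstd_1$-filtration length by induction on that length, applying the five lemma to the $\ext^{*}(\std_1(\sigma),-)$-long exact sequences arising from $0\to V_1\to V\to\pcstd_1(\pi_0)\to 0$ and its $\Phi$-image; the requisite $\ext^{1}$-vanishings follow from Theorem \ref{Thm:char}(\ref{Thm:char:pcstd}), which is applicable on the $\CC_2$ side because $\Phi(V_1)$ inherits a finite $\pcstd_2$-filtration and hence lies in $\F(\pcstd_2)$.

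The main obstacle is extending to arbitrary $V\in\F(\pcstd_1)$ with separated $\pcstd_1$-filtration $V=V_0\supset V_1\supset\cdots$. The plan is to show that the induced descending chain $\Phi(V)\supseteq\Phi(V_1)\supseteq\cdots$ is itself a separated $\pcstd_2$-filtration of $\Phi(V)$. Exactness of $\Phi$ identifies the subquotients as $\pcstd_2(\phi(\pi_n))$ with appropriate grading shifts, so the delicate point is separatedness $\bigcap_n\Phi(V_n)=0$, which I verify by a Laurentian composition-factor count. Each graded piece of $\Phi(V)$ is finite-dimensional, so for any fixed graded simple $q^d L_2(\tau)$ its total multiplicity as a composition factor across the $\pcstd_2$-subquotients $\pcstd_2(\phi(\pi_i))$ is finite; consequently for all sufficiently large $n$ no index $i\geq n$ contributes $q^d L_2(\tau)$. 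If $\bigcap_n\Phi(V_n)$ were non-zero, a simple quotient $q^d L_2(\tau)$ of a nonzero submodule (existing by Noetherian Laurentian) would arise as composition factor of every $\Phi(V_n)$, contradicting the previous sentence. With separatedness secured, Lemma \ref{Lem:NLC}(\ref{Lem:NLC:lim}) applies in both categories, and passing to inverse limits of the finite-length isomorphisms of the preceding paragraph yields that $\alpha$ is an isomorphism for $U=\std_1(\sigma)$; as a bonus $\Phi(V)\in\F(\pcstd_2)$, so $\ext^{1}_{\CC_2}(\std_2(\phi(\sigma)),\Phi(V))=0$ by Theorem \ref{Thm:char}(\ref{Thm:char:pcstd}).

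Finally, for general $U\in\F(\std_1)$, whose $\std_1$-filtration has finite length by the discussion following Theorem \ref{Thm:BHBGG}, the conclusion follows from a standard finite induction via the five lemma, using $\ext^{1}_{\CC_1}(\std_1(\sigma),V)=0$ together with the analogous vanishing in $\CC_2$ just established. The graded-endomorphism assertion is the specialization $U=V=T\in\T_1$. The principal obstacle throughout is the composition-factor analysis in the third paragraph, where the finiteness of $\Pi_1$ together with the Laurentian structure of $\CC_2$ is essential to prevent the induced filtration $\{\Phi(V_n)\}$ from collapsing in the target.
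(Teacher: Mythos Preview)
Your proof follows the same blueprint as the paper's (five-lemma induction along the $\pcstd$-filtration of $V$, then pass to the inverse limit via Lemma~\ref{Lem:NLC}~(\ref{Lem:NLC:lim})), but you have correctly noticed and tried to address a point the paper elides: to invoke Lemma~\ref{Lem:NLC}~(\ref{Lem:NLC:lim}) on the $\CC_2$ side one must know that the induced filtration $\Phi(V)\supset\Phi(V_1)\supset\cdots$ is separated. Your faithfulness argument securing the base case $\alpha_{\std_1(\pi),\pcstd_1(\pi)}\neq 0$ is likewise a genuine addition that the paper leaves implicit.

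However, your argument for separatedness is circular. You correctly show that $\sum_{i\ge 0}[\Phi(V_i)/\Phi(V_{i+1}):q^dL_2(\tau)]\le [\Phi(V):q^dL_2(\tau)]<\infty$, whence the tail vanishes for $i\gg 0$. You then assert that if $W_\infty:=\bigcap_n\Phi(V_n)$ had $q^dL_2(\tau)$ as a composition factor, this would ``contradict the previous sentence.'' But the fact that $[\Phi(V_n):q^dL_2(\tau)]\ge 1$ for all $n$ is \emph{not} in conflict with $[\Phi(V_i)/\Phi(V_{i+1}):q^dL_2(\tau)]=0$ for all large $i$: the equality $[\Phi(V_n):L]=\sum_{i\ge n}[\Phi(V_i)/\Phi(V_{i+1}):L]$ you are implicitly using is exactly what separatedness would give. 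As written you only obtain $\sum_i[\,\cdot\,]\le [\Phi(V):L]-1$, which is no contradiction.

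A clean fix uses the grading directly rather than composition-factor bookkeeping. Since $\{V_n\}$ is separated and $V$ is Laurentian, for every $D$ there is $N$ with $(V_n)_d=0$ for all $d\le D$ and $n\ge N$; in particular $V_n$ is generated in degrees $>D$ for $n\ge N$. Choosing homogeneous generators and applying the graded exact functor $\Phi$ exhibits $\Phi(V_n)$ as a quotient of a finite sum $\bigoplus_j q^{b_j}\Phi(H_1)$ with each $b_j>D$. Since $\Phi(H_1)\in\CC_2$ is Laurentian, say concentrated in degrees $\ge -C$, it follows that $\Phi(V_n)$ is concentrated in degrees $>D-C$. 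Letting $D\to\infty$ yields $\bigcap_n\Phi(V_n)=0$. With this repair the remainder of your argument goes through, and your proof is then strictly more complete than the paper's.
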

	\begin{proof}
		Let $V = V_{0} \supset V_{1} \supset \cdots$
		be a $\pcstd_{1}$-filtration.
		By Theorem \ref{Thm:BHBGG} and
		the five lemma, we have that
		$\homo_{\CC_{1}}(U, V/V_{n})
		\cong \homo_{\CC_{2}}
		(\Phi(U), \Phi(V)/\Phi(V_{n})) $
		for all $n \geq 0$.
		Then Lemma \ref{Lem:NLC} (\ref{Lem:NLC:lim})
		completes the proof.
	\end{proof}

	\begin{proof}[Proof of Theorem \ref{Thm:equiv}]

	By assumption,
	the functor $\Phi$ sends a tilting module
	to a tilting module.
	By Theorem \ref{Thm:tilt},
	we have a direct sum decomposition:
	$\Phi(T_{1}(\pi)) \cong 
	\bigoplus_{\sigma \in \Pi_{1}} 
	T_{2}(\phi(\sigma))^{\oplus a_{\pi}^{\sigma}(q)} $
	for each $\pi \in \Pi_{1}$, 
	where $a_{\pi}^{\sigma}(q) \in \Z_{\geq 0}[q,q^{-1}]$.
	Since the bijection $\phi: \Pi_{1} \to \Pi_{2}$ preserves
	the partial orderings, we have
	$a_{\pi}^{\pi}(q) = 1$ and 
	$a^{\sigma}_{\pi}(q)=0$ for any $\sigma \not \leq \pi$.
	By Lemma \ref{Lem:ff},
	we have an isomorphism
	\begin{align*}
	\eend_{\CC_{1}}(T_{1}(\pi))
	& \cong \eend_{\CC_{2}}(\Phi(T_{1}(\pi))) \\
	&\cong \eend_{\CC_{2}}(T_{2}(\phi(\pi)))\oplus
	\bigoplus_{\sigma < \pi}
	\eend_{\CC_{2}}(
	T_{2}(\phi(\sigma))^{\oplus a^{\sigma}_{\pi}(q)})
	\oplus N,
	\end{align*}
	where $N$ is a part included in the radical.
	Using Theorem \ref{Thm:tilt} (\ref{Thm:tilt:local}),
	we observe that $a^{\sigma}_{\pi}(q) = 0$
	for all $\sigma < \pi$.
	
	This shows that $\Phi(T_{1}(\pi)) \cong T_{2}(\phi(\pi))$.
	By Theorem \ref{Thm:tilt} and Lemma \ref{Lem:ff},
	the functor $\Phi$ induces
	a graded equivalence  
	$\T_{1} \cong \T_{2}$ and therefore yields 
	an equivalence 
	$\Phi : K^{b}(\T_{1}) \simeq K^{b}(\T_{2})$
	of graded triangulated categories.
	By Lemma \ref{Lem:equiv}, we have
	$\Phi : D^{b}(\CC_{1}) \simeq D^{b}(\CC_{2})$.
	Since our functor $\Phi$ is exact,
	it yields
	an equivalence of the hearts of the
	standard $t$-structures, which are
	naturally identified with the original abelian categories
	$\CC_{i}$.
	\end{proof}

\section{Completion}
\label{Completion}

\subsection{Complete category}
\label{compcat}

	Let $\Cc$ be a $\kk$-linear abelian category with
	a subfunctor 
	$\varphi \hookrightarrow \mathrm{Id}_{\Cc}$
	which preserves epimorphisms.
	Note that each object $V \in \Cc$ has a filtration
	$V \supset \varphi(V) \supset \varphi^{2}(V) 
	\supset \cdots
	$.
	We say that the category $\Cc$ is
	 $\varphi$-adically complete if for each object
	 $V \in \Cc$,  the projective limit
	 $\varprojlim V/\varphi^{n}(V)$
	 exists and naturally isomorphic to
	 $V$.
	 An exact functor $\Phi : \Cc \to \Cc'$ from
	 a $\varphi$-adically complete category $\Cc$ 
	 to a $\varphi'$-adically complete category $\Cc'$
	 is said to be continuous if 
	 there is a positive integer $n$ such that
	 $\Phi \circ\varphi^{n}$
	 is a subfunctor of $\varphi' \circ \Phi.$
	 
	  Assume that $\Cc$ have a complete set
	   $\{L(\pi) \mid \pi \in \Pi\}$
	 of representatives of simple isomorphism classes in $\Cc$.
	 We consider the following property $(\clubsuit)$
	 for a $\varphi$-adically complete category $\Cc$:
	 \begin{itemize}
	 \item[$(\clubsuit)_{1}$]
	Each object $V \in \Cc$ is Noetherian;
	  \item[$(\clubsuit)_{2}$]
	 Each simple object $L(\pi)$
	 has a projective cover
	 $P(\pi) \twoheadrightarrow L(\pi)$;
	  \item[$(\clubsuit)_{3}$]
	  For each object $V \in \Cc$, the quotient
	  $V/\varphi(V)$ is of finite length.
	 \end{itemize}
	 
	 We give an example of such a category.
	 Let $H$ be a
	 left Noetherian $\kk$-algebra and $I \subset H$ be a
	 two-sided ideal  of finite codimension.
	 Assume that $H$ is
	 $I$-adically complete
	 i.e.\ $H \cong \varprojlim H/I^{n}.$
	 We define an endofunctor
	 $\varphi $ on $\Cc := H \mof$
	 by $\varphi(V) := IV$.
	Then $\varphi$ is naturally a subfunctor
	of $\mathrm{Id}_{\Cc}$
	preserving epimorphisms and
	$\Cc$ is a $\varphi$-adically
	 complete category with property $(\clubsuit).$ 

	  \begin{Prop} \label{Prop:compcat}
	  Let $\Cc$ be a $\varphi$-adically complete category
	  with property $(\clubsuit)$.
	  Assume that the set $\Pi$ is finite. 
	  We put
	  $P := \bigoplus_{\pi \in \Pi}P(\pi)$
	  and $H := \End_{\Cc}(P)^{\mathrm{op}}$.
	  Then
	  the algebra $H$ is left Noetherian, complete
	  with respect to the finite-codimensional ideal
	  $I:=\Hom_{\Cc}(P, \varphi(P))$.
	  Moreover the functor
	  $\Phi := \Hom_{\Cc}(P, -)$
	  gives a continuous equivalence
	  $\Phi : \Cc \simeq H \mof.$
	  \end{Prop}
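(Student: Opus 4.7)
The plan is to first establish the equivalence $\Phi : \Cc \simeq H\mof$ via a Morita-type argument adapted to the complete setting, and then transfer the relevant properties of $\Cc$ to $H$ through this equivalence. The key preliminary step is a Nakayama-type lemma in $\Cc$: a morphism $f : U \to V$ is surjective whenever the induced map $U/\varphi(U) \to V/\varphi(V)$ is surjective. Letting $C := \mathrm{coker}(f)$, the hypothesis that $\varphi$ preserves epimorphisms yields $C/\varphi(C) = 0$, so $C = \varphi^{n}(C)$ for all $n$, and $\varphi$-adic completeness forces $C = \varprojlim_{n} C/\varphi^{n}(C) = 0$.

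Next, for each $V \in \Cc$, properties $(\clubsuit)_{2}$ and $(\clubsuit)_{3}$ provide a projective cover $Q = \bigoplus_{\pi} P(\pi)^{\oplus n_{\pi}} \twoheadrightarrow V/\varphi(V)$ of finite rank; projectivity lifts this to a map $Q \to V$, which is then surjective by Nakayama. Since $\Phi = \Hom_{\Cc}(P, -)$ is exact by projectivity of $P$, this surjection yields $\Phi(Q) \twoheadrightarrow \Phi(V)$, showing $\Phi(V) \in H\mof$ by $(\clubsuit)_{1}$. Faithfulness of $\Phi$ is immediate since $P$ is a generator. For full faithfulness, I would iterate to obtain a presentation $P_{1} \to P_{0} \to V \to 0$ with $P_{i}$ finite direct sums of summands of $P$, apply both $\Hom_{\Cc}(-, W)$ and $\Hom_{H}(\Phi(-), \Phi(W))$, and invoke the five lemma, using the tautological identification $\Hom_{\Cc}(P(\pi), W) \cong \Hom_{H}(H e_{\pi}, \Phi(W))$ for the idempotent $e_{\pi} \in H$ that picks out $P(\pi)$. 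Essential surjectivity then follows from lifting a finite presentation $H^{b} \to H^{a} \to M \to 0$ of any given $M \in H\mof$ to $P^{b} \to P^{a}$ in $\Cc$ and taking the cokernel.

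The remaining assertions now follow from the equivalence. Noetherianity of $H$ is immediate from $(\clubsuit)_{1}$. For the finite codimension of $I$, applying $\Hom_{\Cc}(P, -)$ to the short exact sequence $0 \to \varphi(P) \to P \to P/\varphi(P) \to 0$ and using projectivity gives $H/I \cong \Hom_{\Cc}(P, P/\varphi(P))$, which is finite-dimensional since $P/\varphi(P)$ has finite length and $\Hom_{\Cc}(P, L(\pi)) \cong \kk$ by the projective cover property. For $I$-adic completeness: the subfunctor $\varphi$ on $\Cc$ transports via $\Phi$ to a subfunctor $\tilde{\varphi} \subset \mathrm{Id}_{H\mof}$ that preserves both epimorphisms and finite direct sums, and a diagram chase shows $\tilde{\varphi}(V) = IV$ for every $V \in H\mof$ (since $\tilde{\varphi}(H) = \Phi(\varphi(P)) = I$). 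Iterating, $\varphi^{n}(P)$ corresponds to $I^{n} \subseteq H$, so the completeness statement $P = \varprojlim_{n} P/\varphi^{n}(P)$ translates into $H = \varprojlim_{n} H/I^{n}$. Continuity of $\Phi$ is then immediate from $\Phi(\varphi(V)) = I \cdot \Phi(V)$.

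The main obstacle I anticipate is the identification $\tilde{\varphi}(V) = IV$, which is the bridge between the abstractly-given subfunctor $\varphi$ and the concrete ideal $I$. The argument requires carefully combining three ingredients: the subfunctor axiom (which, applied to $H$-linear maps $H \to V$, gives $IV \subseteq \tilde{\varphi}(V)$), preservation of epimorphisms (which, applied to a free presentation $H^{n} \twoheadrightarrow V$ and the identity $\tilde{\varphi}(H^{n}) = I^{n}$, gives the reverse inclusion), and preservation of finite direct sums by any subfunctor of the identity. The remainder of the argument is then a careful but routine bookkeeping exercise.
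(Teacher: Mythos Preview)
Your approach is essentially the paper's: both establish the Morita equivalence (the paper by citing Bass, Theorem~II.1.3; you by spelling out the projective-generator argument via your Nakayama lemma), and both reduce $I$-adic completeness to the key identity $I \cdot \Hom_{\Cc}(P, V) = \Hom_{\Cc}(P, \varphi(V))$, proved in the same way by applying $\varphi$ to a surjection $P^{N} \twoheadrightarrow V$ and then $\Hom_{\Cc}(P,-)$.

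One minor point of order: your essential-surjectivity step invokes a \emph{finite presentation} $H^{b} \to H^{a} \to M \to 0$, which already presupposes that $H$ is left Noetherian --- a fact you deduce only afterward ``from the equivalence''. To avoid the circularity, first observe that full faithfulness and exactness of $\Phi$, together with $P$ being a generator, identify $H$-submodules of $\Phi(P^{a})=H^{a}$ with subobjects of $P^{a}$ in $\Cc$ (for $N \subseteq \Hom_{\Cc}(P,V)$ take $W := \sum_{f \in N} \Ima f \subseteq V$, which is a finite sum by $(\clubsuit)_{1}$, and check $\Phi(W)=N$); then $(\clubsuit)_{1}$ gives Noetherianity of $H$ directly, and your presentation argument goes through.
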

	  \begin{proof}
	  By property $(\clubsuit)$,
	  the category $\Cc$ has enough projectives and
	  every projective object is isomorphic to
	  a finite direct sum of $P(\pi)$'s for some $\pi \in \Pi$.
	  Therefore $P$ is a projective generator of the category
	  $\Cc$ and the equivalence
	  $\Phi:\Cc \simeq H\mof$ follows from 
	  a general result of abelian categories
	  (see \cite{Bass} Theorem II.1.3).
	  The algebra $H$ is left Noetherian by $(\clubsuit)_{1}$.
	  We have to show that 
	  the ideal $I$ is finite-codimensional and 
	  the algebra
	  $H$ is $I$-adically complete.
	  First we claim that for any object $V \in \Cc$,
	  we have
	  $I \cdot \Hom_{\Cc}(P, V) = \Hom_{\Cc}(P, \varphi(V)).$
	  Because $P$ is a projective generator of $\Cc$,
	  we have a epimorphism
	  $a : P^{N} \twoheadrightarrow V$ for some $N>0.$
	  Since the functor $\varphi$ preserves epimorphisms, 
	  the morphism
	  $\varphi(a): \varphi(P)^{\oplus N} \to \varphi(V)$
	  is still epic and induces a surjection
	  $\Hom_{\Cc}(P, \varphi(P))^{\oplus N} = I^{\oplus N}
	  \twoheadrightarrow
	  \Hom_{\Cc}(P, \varphi(V) ),$
	  from which the claim follows.
	Then we have
	$\Hom_{\Cc}(P, P/\varphi^{n}(P))
	\cong \Hom_{\Cc}(P,P)/\Hom_{\Cc}(P, \varphi^{n}(P))
	= \End_{\Cc}(P)/I^{n}.$
	Therefore $I$ is finite-codimensional by $(\clubsuit)_{3}.$
	Because the category $\Cc$ is 
	$\varphi$-adically complete,
	we have $
	\End_{\Cc}(P)
	= \varprojlim \Hom_{\Cc}(P, P/\varphi^{n}(P))
	= \varprojlim \End_{\Cc}(P)/I^{n},
	$
	which proves that the algebra $H$ is $I$-adically complete.
	  \end{proof}

\subsection{Complete affine highest weight category}
\label{completeaffine}

	Let $\Cc$ be a $\varphi$-adically complete category
	with property $(\clubsuit)$.
	We write $\hhd V$ (resp.\ $\rrad V$) to indicate
	the head (resp.\ radical) 
	of $V \in \Cc$.
	We assume that the
	index set $\Pi$ is equipped with a partial order $\leq$.
	Then we define the standard object $\std(\pi)$ and
	the proper standard object $\pstd(\pi)$
	for each $\pi \in \Pi$ as:
	\begin{align*}
	  \std(\pi) &:= P(\pi)/ (
	  \sum_{\sigma \not \leq 
	  \pi, f \in \Hom_{\Cc}(P(\sigma), P(\pi))}
	  \Ima f ),\\
	\pstd(\pi) &:= P(\pi) / (
	  \sum_{\sigma \not < \pi, f \in \Hom_{\Cc}(P(\sigma), 
	  \rrad P(\pi))}
	  \Ima f ).
	\end{align*}

	\begin{Def} \label{Def:comphwc}
	Let $\Cc$ be a $\varphi$-adically complete category
	with property $(\clubsuit)$.
	The category $\Cc$
	is called a complete affine highest weight category
	if the following three conditions hold:
	\begin{enumerate}
	\item \label{Def:comphwc:str}
	For each $\pi \in \Pi$,
	the kernel of the natural quotient morphism
	$P(\pi) \twoheadrightarrow \std(\pi)$
	has a $\std$-filtration whose subquotients are 
	$ \cong \std(\sigma)$ for some $\sigma > \pi$;

	\item \label{Def:comphwc:end}
	For each $\pi \in \Pi$, the algebra
	$B_{\pi} := \End_{\Cc}(\std(\pi))$
	is isomorphic to a ring
	$\kk [ \! [ z_{1}, \ldots , z_{n_{\pi}}] \! ]$
	of formal power series in $n_{\pi}$-variables
	for some $n_{\pi} \in \Z_{\geq 0}$;

	\item \label{Def:comphwc:free}
	For any $\pi, \sigma \in \Pi$,
	the $B_{\sigma}$-module
	$\Hom_{\Cc}(P(\pi), \std(\sigma))$
	is free of finite rank. 
	\end{enumerate}
	\end{Def}

	In the case of complete category,
	we also have the parallel theory
	of affine highest weight category
	to the case of graded category as 
	in Section \ref{affhwc} and 
	\ref{tilting},
	although we should replace the property 
	$(\spadesuit)$ with the following $(\hat{\spadesuit})$:
	\begin{itemize}
		\item[$(\hat{\spadesuit})$]
		There is a central local subalgebra $Z \subset H$
		with maximal ideal $\m$ such that
		$Z/\m \cong \kk$ and $H$
		is finitely generated as a $Z$-module.
	 \end{itemize}
	 Note that this is equivalent to simply saying that
	 the algebra $H$ is a finitely generated module 
	 over its center. 
	 
	We omit the other precise statements and their proofs
	since they are quite similar to those
	of graded cases.

\subsection{Completion functor}
\label{completionfunc}

	For a graded $\kk$-vector space $V$,
	we write $V^{\mathtt{f}}$ to indicate that 
	we forget its grading structure.
	For a graded $\kk$-algebra $H$, 
	we have the forgetful functor
	$(-)^{\mathtt{f}} :  H \gmod \to H^{\mathtt{f}} \mof
	; V \mapsto V^{\mathtt{f}}.$

	For a graded $\kk$-vector space 
	$V = \bigoplus_{n \in \Z}V_{n}$, 
	we define its formal completion by
	$\hat{V} := \prod_{n \in \Z}V_{n}.$
	If $H$ is a Laurentian graded algebra,
	$\hat{H}$ naturally becomes a $\kk$-algebra
	and we have a natural 
	isomorphism $\hat{H} \cong \varprojlim
	H^{\mathtt{f}}/I^{n}$
	for any finite-codimensional ideal $I 
	\subset (H_{>0})^{\mathtt{f}}.$
	In this case, the completion $\hat{V}$ of 
	a module $V \in H \gmod$ naturally 
	becomes  a module over $\hat{H}$.

	 \begin{Lem} \label{Lem:compl1}
	 Let $H$ be a Noetherian Laurentian algebra
	with property $(\spadesuit)$.
	\begin{enumerate}
	\item
	  The completion $\hat{H}$ is left Noetherian 
	and complete with respect to 
	the 
	finite-codimensional 
	ideal $\hat{\m}:= \m \hat{H}= \hat{H}\m$;
	\item
	We have $\hat{V} \cong 
	\varprojlim V^{\mathtt{f}}/\m^{n}V^{\mathtt{f}} 
	\cong \hat{H}\otimes_{H}V^{\mathtt{f}}$
	for any $V \in H \gmod$;
	\item
	The completion functor  
	$H \gmod \to \hat{H} \mof; \, V \to \hat{V}$
	is exact.
	\end{enumerate}
	 \end{Lem}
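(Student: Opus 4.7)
The plan is to reduce all three assertions to a single topological observation: on $H$ and on every $V \in H\gmod$, the $\m$-adic topology and the grading topology coincide. Once this is in place, Parts (1) and (2) follow from standard completion formalism and Part (3) is essentially automatic.

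First I would establish the topology comparison. By property $(\spadesuit)$, $Z$ is a commutative Noetherian $\Z_{\geq 0}$-graded $\kk$-algebra with $Z_0 = \kk$ and $\m = Z_{>0}$. Graded Nakayama applied to the ideal $\m$ yields finitely many homogeneous algebra generators $z_1,\ldots,z_r$ of strictly positive degrees $d_1,\ldots,d_r$; setting $d := \min_i d_i$ and $D := \max_i d_i$ gives the inclusions $\m^n \subset Z_{\geq nd}$ and $Z_{\geq N} \subset \m^{\lceil N/D \rceil}$ for all $n, N \geq 0$. Since $H$ (and any $V \in H\gmod$) is a finite $Z$-module with generators in bounded degrees --- with $V$ automatically Laurentian because $H$ is --- the same comparison of filtrations transfers to $H$ and to $V$ up to an additive constant. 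Consequently $\hat{H} = \prod_n H_n = \varprojlim_N H^{\mathtt{f}}/H^{\mathtt{f}}_{\geq N}$ coincides with the $\m$-adic completion $\varprojlim_n H^{\mathtt{f}}/\m^n H^{\mathtt{f}}$, and analogously for $V$.

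For Part (1), graded Nakayama applied to the finite $Z$-module $H$ yields $\dim_\kk H/\m H < \infty$, so $\hat{H}/\hat{\m} \cong H/\m H$ has finite codimension in $\hat{H}$. For left Noetherianness, $Z^{\mathtt{f}}$ is a finitely generated commutative $\kk$-algebra hence Noetherian, so its $\m$-adic completion $\hat{Z}$ is a Noetherian complete local ring, and $\hat{H}$ is finitely generated as a $\hat{Z}$-module (inherited from finite generation over $Z$), hence left Noetherian. For Part (2), the topology comparison applied to $V$ gives $\hat{V} \cong \varprojlim V^{\mathtt{f}}/\m^n V^{\mathtt{f}}$, and the identification with $\hat{H} \otimes_H V^{\mathtt{f}}$ is the standard commutative-algebra fact that the $\m$-adic completion of a finitely generated module over a Noetherian ring agrees with base change to the completed ring. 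For Part (3), exactness of $V \mapsto \hat{V} = \prod_n V_n$ is immediate: a short exact sequence in $H\gmod$ gives a short exact sequence of $\kk$-vector spaces in each degree, and products of short exact sequences of $\kk$-vector spaces are exact.

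The main obstacle will be the topology-comparison step: the argument rests on extracting the right finitely-many degree bounds from $(\spadesuit)$ and graded Nakayama and then propagating them from $Z$ to $H$ and to arbitrary $V \in H\gmod$. Everything downstream is routine commutative-algebra bookkeeping combined with the elementary exactness of products.
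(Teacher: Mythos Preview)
The paper's own proof is merely a citation to \cite{NvO} Section~D.V together with the observation preceding the lemma; your write-up supplies exactly the kind of argument that reference contains, and the overall strategy---show that the grading filtration and the $\m$-adic filtration are cofinal on $H$ and on every $V\in H\gmod$, then invoke standard completion formalism---is the intended one.

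One step does need more care. You assert that $Z$ is Noetherian and that graded Nakayama yields finitely many algebra generators $z_1,\ldots,z_r$. Property $(\spadesuit)$ does not include Noetherianness of $Z$, and graded Nakayama only tells you that a generating set of $\m/\m^2$ lifts to algebra generators---it does not by itself force $\m/\m^2$ to be finite-dimensional. Fortunately the cofinality you want can be obtained directly on modules without this: for $V\in H\gmod$ one has $V$ finite over $Z$, hence $V/\m V$ finite over $\kk$, so $V_{\ge N_1}\subset \m V$ for some $N_1$; since $H$ is left Noetherian, $\m V$ is again finitely generated over $H$ and thus over $Z$, and one iterates to get $V_{\ge N_k}\subset \m^k V$. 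The reverse inclusion $\m^k V\subset V_{\ge k+c}$ is immediate from $\m\subset Z_{\ge 1}$. With cofinality in hand, Parts~(2) and~(3) go through exactly as you wrote. For the left Noetherianness of $\hat H$ in Part~(1), your route via $\hat Z$ again rests on the unproved Noetherianness of $Z$; a clean alternative is to observe that $\hat H$ is complete with respect to the filtration $\hat H_{\ge n}:=\prod_{k\ge n}H_k$ with associated graded ring $H$, which is left Noetherian, and then apply the standard criterion that a complete filtered ring whose associated graded is left Noetherian is itself left Noetherian.
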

	 \begin{proof}
	 See
	  \cite{NvO} Section D.V and 
	 the above observations. 
	 \end{proof}

	 \begin{Lem} \label{Lem:compl2}
	 Let $H$ be a Noetherian Laurentian algebra
	with property $(\spadesuit)$.
	 Then for any $U, V \in H \gmod$, we have
	 a natural isomorphism
	 $ \Hom_{\hat{H}}(\hat{U}, \hat{V})
	 \cong \prod_{n \in \Z} \homo_{H}(U, V)_{n} .
	 $
	 In particular, the completion functor 
	 $H \gmod \to \hat{H} \mof$ 
	 is faithful.
	 \end{Lem}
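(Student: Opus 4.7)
The plan is to combine Lemma \ref{Lem:compl1} with tensor-hom adjunction and then perform a direct graded decomposition. By Lemma \ref{Lem:compl1} we have $\hat{U} \cong \hat{H}\otimes_{H} U^{\mathtt{f}}$, so extension-of-scalars adjunction gives
\[
\Hom_{\hat{H}}(\hat{U}, \hat{V}) \cong \Hom_{H}(U^{\mathtt{f}}, \hat{V}|_{H}),
\]
where $\hat{V}$ is regarded as an $H$-module through the natural map $H \to \hat{H}$. Thus it suffices to identify $\Hom_{H}(U^{\mathtt{f}}, \hat{V})$ with $\prod_{n \in \Z} \homo_{H}(U,V)_{n}$.

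To construct this identification, I would decompose an $H$-linear map $f : U^{\mathtt{f}} \to \hat{V} = \prod_{k \in \Z}V_{k}$ into graded components. For each $n \in \Z$, define $f^{(n)} : U \to V$ by letting its restriction to $U_{d}$ be $f|_{U_{d}}$ composed with the projection $\hat{V} \twoheadrightarrow V_{d+n}$. Conversely, any family $\{g^{(n)}\}_{n \in \Z}$ with $g^{(n)} \in \homo_{H}(U,V)_{n}$ assembles into a map $g : U^{\mathtt{f}} \to \hat{V}$ via $g(u) := \bigl(g^{(k-d)}(u)\bigr)_{k \in \Z}$ for $u \in U_{d}$ (the product makes sense even though infinitely many components may be nonzero). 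These two assignments are manifestly natural in $U$ and $V$ and mutually inverse.

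The only point genuinely needing verification, and the only place a reader might hesitate, is that each component $f^{(n)}$ is $H$-linear rather than merely $\kk$-linear. This uses homogeneity crucially: for $h \in H_{m}$ and $u \in U_{d}$, the equality $f(hu) = hf(u)$ in $\hat{V}$, projected onto the degree $d+m+n$ component, reads $f^{(n)}(hu) = h f^{(n)}(u)$, because multiplication by $h$ shifts degree by exactly $m$. Granted this, faithfulness of the completion functor is immediate: a morphism $f : U \to V$ in $H\gmod$ has degree $0$, so under the isomorphism its completion $\hat{f}$ corresponds to the family with $f^{(0)} = f$ and $f^{(n)} = 0$ for $n \neq 0$; hence $\hat{f} = 0$ forces $f = 0$. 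No serious obstacle arises in this argument; the content is entirely formal once Lemma \ref{Lem:compl1} is in hand.
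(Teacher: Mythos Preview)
Your argument is correct and more direct than the paper's. The paper first treats the case where $V$ has finite length (so $\hat V=V^{\mathtt{f}}$ and only finitely many graded components of $\homo_H(U,V)$ are nonzero), and then for general $V$ passes through the inverse system $\{V/\m^{n}V\}_n$: it identifies $\Hom_{\hat H}(\hat U,\hat V)\cong\varprojlim_n\homo_H(U,V/\m^nV)^{\mathtt{f}}$ and then uses the degree bounds of Lemma~\ref{Lem:NLC} to show that this projective limit equals $\prod_m\homo_H(U,V)_m$. Your approach bypasses the inverse-limit step entirely by exploiting the concrete description $\hat V=\prod_k V_k$ and decomposing maps degree by degree; this is shorter and does not invoke Lemma~\ref{Lem:NLC}. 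The paper's route, by contrast, stays within the $\m$-adic framework and adapts more transparently to the complete-category setting of Section~\ref{Completion}, where no graded product description of the completion is available.
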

	 \begin{proof}
	 If $V$ is of finite length, we have
	 $$\Hom_{\hat{H}}(\hat{U}, \hat{V})
	 \cong \Hom_{H^{\mathtt{f}}} (U^{\mathtt{f}}, 
	 V^{\mathtt{f}})
	 \cong \homo_{H}(U, V)^{\mathtt{f}}.$$
	 The third term is equal to 
	 $\prod_{n \in \Z}\homo_{H}(U, V)_{n}$
	 because
	 only finitely many direct summands 
	 are non-zero. 
	 In general,
	 note that we have
	 $\hat{V}/ \hat{\m}^{n} \hat{V} \cong
	 V^{\mathtt{f}}/\m^{n}V^{\mathtt{f}} $ for all $n >0$ 
	 and we have:
	 $$
	 \Hom_{\hat{H}}(\hat{U}, \hat{V})
	 \cong \varprojlim
	 \Hom_{\hat{H}}(\hat{U},
	  \hat{V}/\hat{\m}^{n}\hat{V})
	 \cong
	 \varprojlim
	 \homo_{H}(U, V/\m^{n}V)^{\mathtt{f}}.
	 $$
	 By Lemma \ref{Lem:NLC} (\ref{Lem:NLC:fin}), 
	 there is
	 an increasing sequence of integers 
	 $N_{1} < N_{2} < \cdots$
	 such that
	 $\homo_{H}(U, \m^{n}V )_{m} 
	 =\ext_{H}^{1}(U, \m^{n}V)_{m}
	 = 0$ for all $m < N_{n}$.
	Then we have a natural short 
	exact sequence of projective systems:
	$
	0 \to
	\{ \bigoplus_{m < N_{n}}\homo_{H}(U, V)_{m} \}_{n}
	\to
	\{ \bigoplus_{m \in \Z}\homo_{H}(U, V/\m^{n}V)_{m}  \}_{n}
	\to
	\{
	\bigoplus_{m \geq  N_{n}}\homo_{H}(U, V/\m^{n}V)_{m}
	\}_{n}
	\to 0.
	$
	By taking the projective limits, we get
	$\varprojlim
	\bigoplus_{m \geq  N_{n}}\homo_{H}(U, V/\m^{n}V)_{m}
	= 0
	$
	and therefore we have:
	$$
	\prod_{m \in \Z}\homo_{H}(U, V)_{m}
	\cong
	\varprojlim_{n} \bigoplus_{m < N_{n}}
	\homo_{H}(U, V)_{m}
	\cong
	\varprojlim_{n}
	\bigoplus_{m \in \Z}\homo_{H}(U, V/\m^{n}V)_{m}.
	$$
	 \end{proof}

	\begin{Lem} \label{Lem:compl3}
	Let $H$ be a Noetherian Laurentian algebra
	with property $(\spadesuit)$.
	 Let $\{ L(\pi) \mid \pi \in \Pi \}$
	 be a complete and irredundant set
	 of simple modules
	 in $H \gmod$
	 up to isomorphism and grading shift.
	 Then:
	 \begin{enumerate}

	 \item \label{Lem:compl3:simple}
	 the set
	 $\{ L(\pi)^{\wedge}  \in \hat{H} \mof 
	 \mid \pi \in \Pi \}$
	 is a complete set of isomorphism classes of
	 simple modules in $\hat{H}\mof$;

	 \item \label{Lem:compl3:proj}
	 the $\hat{H}$-module $P(\pi)^{\wedge}$
	 is a projective cover of the simple module
	 $L(\pi)^{\wedge}$ for each $\pi \in \Pi$.
	 \end{enumerate}
	\end{Lem}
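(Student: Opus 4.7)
The strategy is to reduce both claims to the finite-dimensional setting modulo the central maximal ideal $\hat{\m}$. The underlying observation is that $\hat{\m} \subset J(\hat{H})$: indeed $\hat{\m}$ is central in $\hat{H}$, topologically nilpotent, and by property $(\spadesuit)$ together with Lemma \ref{Lem:compl1} the quotient $\hat{H}/\hat{\m} \cong H/\m H$ is a finite-dimensional $\kk$-algebra, so a standard Nakayama argument applies.

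For (\ref{Lem:compl3:simple}), every simple $L(\pi) \in H\gmod$ is annihilated by $\m$: indeed $\m L(\pi)$ is a graded $H$-submodule of $L(\pi)$ because $\m \subset Z$ is central, and it must vanish by simplicity together with graded Nakayama. Consequently $L(\pi)$ is finite-dimensional over $\kk$ by $(\spadesuit)$, and in particular $L(\pi)^{\wedge} = L(\pi)^{\mathtt{f}}$. Simple $\hat{H}$-modules coincide with simple $\hat{H}/J(\hat{H})$-modules, and since $\hat{\m} \subset J(\hat{H})$ they coincide with simple modules over the finite-dimensional $\kk$-algebra $A := H/\m H$. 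Finally, the forgetful functor identifies isomorphism classes of simple graded $A$-modules up to grading shift with simple ungraded $A$-modules; this is a standard fact for finite-dimensional graded algebras over a field (cf.\ \cite{NvO}), since the graded and ungraded Jacobson radicals of $A$ coincide, so both sets biject naturally with the simple modules of the common semisimple quotient. This yields the desired correspondence $\pi \mapsto L(\pi)^{\wedge}$.

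For (\ref{Lem:compl3:proj}), since $P(\pi)$ is finitely generated and projective in $H\gmod$, it is a direct summand of a finite direct sum of grading shifts of $H$, so $P(\pi)^{\mathtt{f}}$ is a finitely generated projective $H^{\mathtt{f}}$-module, and by Lemma \ref{Lem:compl1}~(2) we have $P(\pi)^{\wedge} \cong \hat{H} \otimes_H P(\pi)^{\mathtt{f}}$, which is therefore a finitely generated projective $\hat{H}$-module. Exactness of completion (Lemma \ref{Lem:compl1}~(3)) yields a surjection $P(\pi)^{\wedge} \twoheadrightarrow L(\pi)^{\wedge}$. It then suffices to show that $P(\pi)^{\wedge}$ is indecomposable, since a projective indecomposable surjecting onto a simple module is its projective cover. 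By Lemma \ref{Lem:compl2},
\[
\End_{\hat{H}}(P(\pi)^{\wedge}) \;\cong\; \prod_{n \in \Z} \eend_H(P(\pi))_{n},
\]
and the graded ring $\eend_H(P(\pi))$ is graded local (its unique graded maximal ideal consists of $\rad(\eend_H(P(\pi))_{0})$ together with all nonzero-degree pieces, and the graded residue field is $\kk$ by the Schurian assumption applied to $L(\pi)$), so its Laurentian completion remains a local ring with residue field $\kk$. Hence $P(\pi)^{\wedge}$ is indecomposable, which completes the proof.

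The main obstacle is the auxiliary ingredient used in part (\ref{Lem:compl3:simple}), namely the identification of simple graded and ungraded modules over the finite-dimensional algebra $H/\m H$ via the forgetful functor; once this is granted, everything else is a routine transfer using Lemmas \ref{Lem:compl1} and \ref{Lem:compl2} together with the centrality and topological nilpotence of $\hat{\m}$.
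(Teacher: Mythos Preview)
Your argument is correct and, for part (\ref{Lem:compl3:simple}), essentially the same as the paper's: both reduce to simple modules over the finite-dimensional algebra $H/\m H$ and invoke the gradability of simples there (the paper cites \cite{Kash} Lemma 5.1.6 where you spell out the standard fact about graded versus ungraded simples).

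For part (\ref{Lem:compl3:proj}) the two proofs diverge in their tactics. For projectivity of $P(\pi)^{\wedge}$ you use the direct-summand argument, which is more elementary than the paper's route: the paper instead checks $\Ext^{1}_{\hat{H}}(\hat{P}, L(\sigma)^{\wedge})=0$ and then invokes Lemma \ref{Lem:limvan} to conclude $\Ext^{1}_{\hat{H}}(\hat{P},V)=0$ for all $V$. For the projective-cover statement the situation is reversed. The paper simply applies Lemma \ref{Lem:compl2} to compute $\Hom_{\hat{H}}(P(\pi)^{\wedge}, L(\sigma)^{\wedge}) \cong \delta_{\sigma\pi}\,\kk$, which shows at once that $P(\pi)^{\wedge}$ has simple head $L(\pi)^{\wedge}$. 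Your approach via locality of $\End_{\hat{H}}(P(\pi)^{\wedge})$ reaches the same conclusion but the step ``its Laurentian completion remains a local ring'' deserves one more sentence: the cleanest justification uses $(\spadesuit)$ rather than pure Laurentian combinatorics, namely $\hat{E}:=\End_{\hat{H}}(P(\pi)^{\wedge})$ is module-finite over the complete local ring $\hat{Z}$, so $\hat{\m}\hat{E}\subset J(\hat{E})$, and $\hat{E}/\hat{\m}\hat{E}\cong \End_{H/\m H}(P(\pi)/\m P(\pi))$ is local because $P(\pi)/\m P(\pi)$ is an indecomposable projective over the finite-dimensional algebra $H/\m H$. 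With that clarification your proof is complete; the paper's Hom computation is the shorter path here.
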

	\begin{proof}
	(\ref{Lem:compl3:simple})
	Let $L \in \hat{H}\mof$ be a simple module.
	By Nakayama's lemma,
	we have $\m L=0 $.
	Then $L$
	 is gradable by \cite{Kash} Lemma 5.1.6.

	(\ref{Lem:compl3:proj})
	Let $P \in H \gmod$ be a projective module.
	Then 
	we have 
	$\Ext_{\hat{H}}^{1}(\hat{P}, L(\pi)^{\wedge} ) =
	\ext_{H}^{1}(P, L(\pi))
	= 0$
	for all $\pi \in \Pi$.
	Then for any $V \in \hat{H}\mof$, we have
	$\Ext^{1}_{\hat{H}}(\hat{P}, V) = 0$
	by Lemma \ref{Lem:limvan}.
	Therefore $\hat{P}$ is projective
	in $\hat{H} \mof.$
	Moreover, by Lemma \ref{Lem:compl2}, we have:
	$$\Hom_{\hat{H}}(P(\pi)^{\wedge}, L(\sigma)^{\wedge}) =
	\begin{cases}
	\kk & \sigma = \pi; \\
	0 & \sigma \neq \pi.
	\end{cases}
	$$
	This shows that $P(\pi)^{\wedge}$ is a projective cover of
	$L(\pi)^{\wedge}$ in $\hat{H}\mof$.
	\end{proof}

	Combining the above lemmas,
	we conclude as follows.

	\begin{Thm} \label{Thm:compl}
	Let $H$ be a graded affine quasi-hereditary algebra
	with property $(\spadesuit)$. 
	Then $\hat{H}$ is a complete
	affine quasi-hereditary algebra 
	satisfying the property
	$(\hat{\spadesuit})$
	with respect to $\hat{Z}$.
	The standard
	(resp. proper standard, proper costandard,
	indecomposable tilting) module
	in $\hat{H}\mof$
	associated to $\pi \in \Pi$ is
	the completion of the 
	counterpart
	in $H \gmod$.  
	Moreover the complete affine algebra
	$\End_{\hat{H}}(\std(\pi)^{\wedge})$
	is equal to the formal completion 
	$\hat{B}_{\pi}$
	of
	the graded affine algebra
	$B_{\pi} = \eend_{H}(\std(\pi))$.
	\end{Thm}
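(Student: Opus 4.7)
The strategy is to verify each requirement for $\hat{H}$ to be a complete affine quasi-hereditary algebra (i.e.\ that $\hat{H}\mof$ is a complete affine highest weight category) by transferring the corresponding graded property through the completion functor, systematically using Lemmas \ref{Lem:compl1}, \ref{Lem:compl2} and \ref{Lem:compl3}. Throughout, the central facts are that completion is exact and faithful, and that $\Hom_{\hat{H}}(\hat{U},\hat{V})$ is the $\Z$-product of the graded homomorphism spaces when $U,V \in H\gmod$.

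First I would verify property $(\hat{\spadesuit})$ and set up the framework. Since $Z$ is $\Z_{\geq 0}$-graded with $Z_{0}=\kk$, its completion $\hat{Z}$ is a central local subalgebra of $\hat{H}$ with maximal ideal $\hat{\m}$ and residue field $\kk$; finite generation of $\hat{H}$ as an $\hat{Z}$-module is inherited from $H$ over $Z$ by applying $\hat{Z}\otimes_{Z}-$. With the subfunctor $\varphi(V):=\hat{\m}V$ on $\Cc:=\hat{H}\mof$, Lemma \ref{Lem:compl1} shows $\Cc$ is $\varphi$-adically complete; property $(\clubsuit)_{1}$ is Noetherianity, $(\clubsuit)_{2}$ is Lemma \ref{Lem:compl3}(\ref{Lem:compl3:proj}), and $(\clubsuit)_{3}$ follows since $\hat{\m}$ is of finite codimension in $\hat{H}$. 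Lemma \ref{Lem:compl3} then identifies the simple objects as $L(\pi)^{\wedge}$ and their projective covers as $P(\pi)^{\wedge}$.

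Next I would check that $\std(\pi)^{\wedge}$ agrees with the standard object in $\Cc$ defined intrinsically by Subsection \ref{completeaffine}, and likewise for $\pstd(\pi)^{\wedge}$. By Lemma \ref{Lem:compl2}, every morphism $P(\sigma)^{\wedge}\to P(\pi)^{\wedge}$ is a formal $\Z$-product of homogeneous morphisms $P(\sigma)\to P(\pi)$, so the submodule of $P(\pi)^{\wedge}$ generated by their images coincides with the completion of the corresponding graded submodule of $P(\pi)$; exactness of completion (Lemma \ref{Lem:compl1}) then identifies the quotients. Given this, Definition \ref{Def:comphwc}(\ref{Def:comphwc:str}) follows at once since an $\std$-filtration of $\Ker(P(\pi)\twoheadrightarrow\std(\pi))$ completes to an $\std$-filtration of $\Ker(P(\pi)^{\wedge}\twoheadrightarrow\std(\pi)^{\wedge})$ with subquotients $\std(\sigma)^{\wedge}$, $\sigma>\pi$. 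Condition (\ref{Def:comphwc:end}) is the computation $\End_{\hat{H}}(\std(\pi)^{\wedge})\cong \prod_{n\in\Z}\homo_{H}(\std(\pi),\std(\pi))_{n}=\hat{B}_{\pi}$ by Lemma \ref{Lem:compl2}, and this equals $\kk[\![z_{1},\ldots,z_{n_{\pi}}]\!]$ because $B_{\pi}=\kk[z_{1},\ldots,z_{n_{\pi}}]$ has positive-degree generators. Condition (\ref{Def:comphwc:free}) reduces via Lemma \ref{Lem:compl2} to the graded freeness of $\homo_{H}(P(\pi),\std(\sigma))$ over $B_{\sigma}$: the completion of a Laurentian graded free $B_{\sigma}$-module of finite rank is a free $\hat{B}_{\sigma}$-module of the same rank.

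Finally, for proper costandards and tilting modules I would invoke their homological characterizations: $\pcstd(\pi)^{\wedge}$ has finite length in $\hat{H}\mof$ (since $\pcstd(\pi)$ has finite length by Theorem \ref{Thm:BHBGG}), and an Ext version of Lemma \ref{Lem:compl2}, valid since the target is of finite length, preserves the characterizing Ext-vanishing of Theorem \ref{Thm:BHBGG}; hence $\pcstd(\pi)^{\wedge}$ is the proper costandard in $\Cc$. For tilting modules, the homological characterization of Theorem \ref{Thm:char} together with exactness and faithfulness of completion shows that $T(\pi)^{\wedge}$ is a tilting module in $\Cc$; the local endomorphism ring property of Theorem \ref{Thm:tilt}(\ref{Thm:tilt:local}) passes to $\End_{\hat{H}}(T(\pi)^{\wedge})=\widehat{\eend_{H}(T(\pi))}$, so it remains indecomposable. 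The main obstacle is the identification of $\std(\pi)^{\wedge}$ with the intrinsically defined standard object: morphisms between completed projectives are only topologically spanned by homogeneous ones, so one must combine density with exactness of completion to show the ideal they generate is the completion of the graded one; once this step is clean, every remaining assertion reduces to a routine application of Lemmas \ref{Lem:compl1} and \ref{Lem:compl2}.
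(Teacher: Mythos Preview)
Your proposal is correct and follows essentially the same approach as the paper: the paper states the theorem immediately after the sentence ``Combining the above lemmas, we conclude as follows,'' offering no further details, so Lemmas \ref{Lem:compl1}, \ref{Lem:compl2}, \ref{Lem:compl3} are precisely what is intended. Your write-up is in fact a careful elaboration of that combination, including the one genuinely delicate point (identifying the completion of the graded kernel with the intrinsically defined kernel in $\hat{H}\mof$) that the paper leaves implicit.
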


\section{An application: The Arakawa-Suzuki functor}
\label{application}

\subsection{The degenerate affine Hecke algebra of $GL_n$}
\label{dAHA}
		We fix a positive integer $n > 0$.

		\begin{Def} \label{Def:dAHA}
		The degenerate affine Hecke algebra
		of $GL_n$ is the $\C$-algebra $H_{n}$ defined 
		by the generators
		$$\{ x_1, \ldots x_{n}\} \cup \{ s_1, \ldots , s_{n-1}\}$$
		subject to the following relations
		\begin{alignat*}{2}
			x_i x_j &= x_j x_i , &
			s_i^{2} &= 1, \\
			s_i s_{i+1} s_{i} &= s_{i+1} s_i s_{i+1} , \qquad&
			s_i s_j &= s_j s_i \quad \text{if} \, \, |i-j| > 1 ,\\
			x_{i+1}s_i &= s_i x_i +1, &
			x_j s_i &= s_i x_j \quad \text{if} \, \,  j \neq i, i+1.
		\end{alignat*}
		\end{Def}
		
	Let $P_{n} := \C[x_1, \ldots, x_n]$ be 
	the polynomial ring
	and 
	let $\C \SG_{n}$ be
	the group algebra 
	of the symmetric group of degree $n$
	which is generated by the simple reflections 
	$\{s_{1} , \ldots, s_{n-1} \}$.
	From Definition \ref{dAHA}, we have 
	the natural $\C$-algebra homomorphisms
	$P_{n} \to H_{n}$ and $\C \SG_{n} \to H_{n}$,
	which are injective by 
	Proposition \ref{Prop:dAHA} below.
		
	Note that there is an anti-involutive algebra homomorphism
	$\psi : H_{n} \to H_{n}$ which
	fixes all the generators $x_{i}, s_{i}$.
	 For a left $H_{n}$-module $V$,
	  we define a left $H_{n}$-module structure
	  on the dual space $V^{\circledast} := \Hom_{\C}(V, \C)$
	  by twisting the natural right $H_{n}$-action
	  by the anti-involution $\psi$.
	
	\begin{Prop}[cf.\ \cite{Kbook} Theorem 3.2.2 and 3.3.1]
	 \label{Prop:dAHA}
	\ 
	  \begin{enumerate}
	  \item \label{Prop:dAHA:PBW}
	  As a $(\C \SG_{n}, P_{n})$-bimodule, we have
	  $H_{n} \cong \C \SG_{n} \otimes_{\C} P_{n}$;
	  \item  \label{Prop:dAHA:center}
	  The center $Z(H_{n})$ of $H_{n}$ is equal to
	  the subalgebra of symmetric polynomials in $P_{n}.$ I.e.\ 
	  $Z(H_{n}) = (P_{n})^{\SG_{n}}$.
	  \end{enumerate}
	  \end{Prop}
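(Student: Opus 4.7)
The plan is to establish the two parts separately by standard techniques for the degenerate affine Hecke algebra.

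For part (\ref{Prop:dAHA:PBW}), I would first show that the multiplication map $\C\SG_{n}\otimes_{\C}P_{n}\to H_{n}$ is surjective, then injective. For surjectivity, starting from any monomial in the generators $\{x_{k},s_{l}\}$, the relations $x_{i+1}s_{i}=s_{i}x_{i}+1$ and $x_{j}s_{i}=s_{i}x_{j}$ for $j\neq i,i+1$ let me repeatedly commute each $x_{k}$ to the right of the $\SG_{n}$-generators, producing at each step only correction terms of strictly lower complexity; an induction on the number of $x$-factors appearing to the left of some $s_{l}$ gives the desired form $\sum_{w}w\cdot p_{w}(x)$. For injectivity, I would construct the faithful polynomial representation of $H_{n}$ on $P_{n}$ in which $x_{i}$ acts by multiplication and $s_{i}$ acts via $s_{i}\cdot f:=s_{i}(f)+\partial_{i}(f)$, with $\partial_{i}(f):=(f-s_{i}(f))/(x_{i}-x_{i+1})$ the Demazure-type operator. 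One directly verifies that these operators satisfy the defining relations, and then testing an assumed PBW combination $\sum_{w}w\otimes p_{w}$ against monomials supported on generic dominant weights separates the contributions of distinct $w\in\SG_{n}$.

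For part (\ref{Prop:dAHA:center}), the inclusion $(P_{n})^{\SG_{n}}\subset Z(H_{n})$ is immediate: a symmetric polynomial in $x_{1},\ldots,x_{n}$ commutes trivially with every $x_{j}$, and commutes with every $s_{l}$ since in the above representation $s_{l}$-conjugation differs from the variable swap only by lower-order corrections that vanish on $\SG_{n}$-invariants. For the converse, given $z\in Z(H_{n})$, I would use (\ref{Prop:dAHA:PBW}) to write $z=\sum_{w\in\SG_{n}}w\cdot p_{w}$ and analyze $[x_{i},z]=0$. Using the commutation relations, $x_{i}w=w\,x_{w^{-1}(i)}+(\text{terms supported on strictly shorter }\SG_{n}\text{-elements})$, and an induction on Bruhat length on the leading term forces $p_{w}=0$ for all $w\neq 1$. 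Hence $z\in P_{n}$, and then $[s_{l},z]=0$ for every simple transposition forces $z$ to be symmetric.

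The main obstacle will be the injectivity in (\ref{Prop:dAHA:PBW}): the bookkeeping needed to verify every defining relation of $H_{n}$ in the polynomial representation, especially the mixed relation $x_{i+1}s_{i}=s_{i}x_{i}+1$ and the braid relation for $s_{i}$, is the only genuinely nontrivial computation. Once the PBW decomposition is in hand, part (\ref{Prop:dAHA:center}) follows from a clean filtration argument on the $\SG_{n}$-length of elements.
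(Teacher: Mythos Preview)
The paper does not supply its own proof of this proposition; it simply cites Kleshchev's book \cite{Kbook}, Theorems~3.2.2 and~3.3.1, where the standard argument is carried out. Your sketch is essentially that standard argument: surjectivity of the multiplication map by straightening monomials, injectivity via the polynomial (``basic'') representation of $H_n$ on $P_n$, and the center computation by a leading-term analysis in the PBW basis. So your approach aligns with the cited reference rather than differing from it.

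One small correction: with the relations as stated in Definition~\ref{Def:dAHA} (namely $x_{i+1}s_i = s_i x_i + 1$), the operator realizing $s_i$ on $P_n$ must be $s_i\cdot f = s_i(f) - \partial_i(f)$, not $s_i(f) + \partial_i(f)$; you can check this already on $f = x_i$. With the corrected sign the verification of $s_i^2 = 1$, the braid relation, and the mixed relation all go through. Your injectivity argument could also be sharpened: rather than ``testing against generic dominant weights,'' it is cleaner to observe that on $P_n$ the operator $w\in\SG_n$ acts as the permutation of variables modulo strictly lower-degree terms, so a relation $\sum_w w\,p_w = 0$ forces, by looking at top degree, the vanishing of $\sum_w w(p_w f)$ for all $f$, and the linear independence of the permutation operators over $P_n$ finishes it. Likewise, for the center the identity $s_i\,p = s_i(p)\,s_i - \partial_i(p)$ in $H_n$ (valid for all $p\in P_n$) makes the final step immediate: centrality with the $s_i$ forces $s_i(p)=p$ and $\partial_i(p)=0$ simultaneously.
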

	Let $Q:= \bigoplus_{i \in \Z}\Z \alpha_{i}$
	be the root lattice of type $A_{\infty}$ and
	define $Q^{+} := \sum_{i \in \Z} \Z_{\geq 0} \alpha_{i}$. 
	For each $i,j \in \Z$ with
	 $i < j$, 
	we define the positive root $\alpha(i,j)$ by
	$\alpha(i,j):= \alpha_{i} + \alpha_{i+1} + 
	\cdots + \alpha_{j-1}
	\in Q^{+}$.
	For each element $\beta = \sum_{i \in \Z} n_{i}\alpha_{i}
	\in Q^{+}$, we define its height 
	by $\mathtt{ht}(\beta):= \sum_{i \in \Z} n_{i}$.
	
	We fix an 
	element $\beta \in Q^{+}$ with $\mathtt{ht}(\beta) = n$.
	We regard $\beta$ as an unordered
	set of $n$ integers with multiplicity i.e.\ 
	$\beta \in \Z^{n}/\SG_{n} \subset \C^{n}/ \SG_{n}
	=
	\mathrm{Specm}(Z(H_{n})),$
	where the equality is due to
	Proposition \ref{Prop:dAHA} (\ref{Prop:dAHA:center}).
	We denote the formal completion
	of the affine algebra $Z(H_{n})$
	at $\beta$ by
	$\hat{Z}(H_{n})_{\beta}$ and define
	$\hat{H}_{\beta} 
	:= H_{n}\otimes_{Z(H_{n})} \hat{Z}(H_{n})_{\beta}.$

	  An unordered multiset
	  $\pi = \{ \alpha(i_{1},j_{1}), \ldots,
	  \alpha(i_{m},j_{m}) \}$
	  of positive roots
	  such that
	  $\alpha(i_{1},j_{1}) + \cdots +
	  \alpha(i_{m},j_{m}) = \beta$
	  is called
	  a Kostant partition
	  of $\beta$.
	  We denote by $\KP(\beta)$
	  the set of all Kostant partitions of $\beta$.
	  This is a finite set.
	  Let $\pi, \pi' \in \KP(\beta).$
	 We write $\pi \unlhd \pi'$ if $\pi = \pi'$
	 or one of the following two conditions hold:
	  \begin{enumerate}
	  \item
	  There are integers $i, j, k$ with $i < j <k$ such that
	  $\alpha(i,k) \in \pi$ and $\alpha(i,j), \alpha(j,k) \in \pi'$
	  with 
	  $\pi \setminus \{ \alpha(i,k) \} = \pi' \setminus \{\alpha(i,j)
	  , \alpha(j,k)\}$;
	  \item
	  There are integers $i, j ,k ,l$ with $i < j < k <l$ such that
	  $\alpha(i,l), \alpha(j,k) \in \pi$ and
	  $\alpha(i,k), \alpha(j,l) \in \pi'$
	  with
	  $\pi \setminus \{\alpha(i,l), \alpha(j,k)\}
	  = \pi' \setminus \{ \alpha(i,k), \alpha(j,l)\}$.
	  \end{enumerate}
	  The relation $\unlhd$ generates a
	  partial order on $\KP(\beta)$,
	  which we denote by the same symbol $\unlhd$.
	  
	\begin{Thm} \label{Thm:dAHA}
	The algebra
	$\hat{H}_{\beta}$ is
	a complete affine quasi-hereditary algebra 
	with the index poset $(\KP(\beta), \unlhd)$
	satisfying the property $(\hat{\spadesuit})$.
	\end{Thm}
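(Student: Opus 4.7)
The plan is to transfer the affine quasi-hereditary structure from the Khovanov-Lauda-Rouquier (KLR) algebra $R(\beta)$ of type $A_\infty$ across the degenerate analog of the Brundan-Kleshchev isomorphism, and then invoke the completion theorem Theorem~\ref{Thm:compl}.

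First I verify property $(\hat{\spadesuit})$. By the PBW decomposition in Proposition~\ref{Prop:dAHA}(\ref{Prop:dAHA:PBW}), $H_n$ is free of rank $n!$ over $P_n$, and in turn $P_n$ is free of rank $n!$ over $Z(H_n) = P_n^{\SG_n}$ by the classical Chevalley theorem on symmetric polynomials. So $H_n$ is finitely generated as a $Z(H_n)$-module, and $\hat{H}_\beta = H_n \otimes_{Z(H_n)} \hat{Z}(H_n)_\beta$ is finitely generated over the complete Noetherian local ring $\hat{Z}(H_n)_\beta$, whose residue field is $\C$. This gives $(\hat{\spadesuit})$.

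Next I appeal to the degenerate analog of the Brundan-Kleshchev isomorphism: there is a $\C$-algebra isomorphism $\hat{H}_\beta \cong \hat{R}(\beta)$, where $R(\beta)$ denotes the KLR algebra of type $A_\infty$ associated with $\beta \in Q^+$ and $\hat{R}(\beta)$ is its formal completion in the sense of Subsection~\ref{completionfunc}. Under this identification, the KLR idempotents $e(\ii)$ correspond to projections onto the generalized joint eigenspaces for $x_1,\ldots,x_n$ with eigenvalue sequence $\ii$ acting on the regular representation. By results of Brundan-Kleshchev-McNamara \cite{BKM} and Kato \cite{Kato}, the graded algebra $R(\beta)$ is a graded affine quasi-hereditary algebra satisfying $(\spadesuit)$ with index poset $(\KP(\beta),\unlhd)$; for each $\pi = \{\alpha(i_1,j_1),\ldots,\alpha(i_m,j_m)\} \in \KP(\beta)$, the corresponding standard module is the parabolic induction of the cuspidal standard modules attached to the roots $\alpha(i_k,j_k)$. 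Applying Theorem~\ref{Thm:compl} to $R(\beta)$ then yields that $\hat{R}(\beta) \cong \hat{H}_\beta$ is a complete affine quasi-hereditary algebra with property $(\hat{\spadesuit})$ and the same index poset $(\KP(\beta),\unlhd)$.

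The main obstacle is aligning the partial order used in the cited affine quasi-hereditary structure on $R(\beta)$ with the combinatorial order $\unlhd$ defined in Subsection~\ref{dAHA}; this amounts to checking that the two generating covering relations of $\unlhd$ coincide with the covering relations of the bilexicographic order used in \cite{BKM}, \cite{Kato}, and in particular that both orders determine the same collection of saturated subsets of $\KP(\beta)$, so that the induced filtrations on standard modules match up under the isomorphism $\hat{H}_\beta \cong \hat{R}(\beta)$.
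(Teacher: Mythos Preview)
Your proof follows essentially the same route as the paper's: use the affine quasi-hereditary structure on the KLR algebra $R_\beta$ from \cite{BKM}, \cite{Kato}, apply Theorem~\ref{Thm:compl}, and transport across the Brundan--Kleshchev isomorphism $\hat{R}_\beta \cong \hat{H}_\beta$ from \cite{BK}. The only notable difference is in resolving your ``main obstacle'': rather than matching $\unlhd$ against the bilexicographic order, the paper (see Remark~\ref{Rem:order}) identifies $\unlhd$ with the opposite of the orbit closure order on the type~$A$ quiver representation space via \cite{ADK} and \cite{ext}, which connects directly to Kato's geometric construction and avoids the covering-relation comparison you left open.
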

	\begin{proof}
	Brundan-Kleshchev-McNamara \cite{BKM}
	and Kato \cite{Kato} proved that 
	the KLR algebra $R_{\beta}$
	associated to $\beta$ 
	is a graded affine quasi-hereditary algebra
	with the index poset $(\KP(\beta), \unlhd)$
	(see Remark \ref{Rem:order} below).
	By Khovanov-Lauda \cite{KL} Corollary 2.10,
	we can easily see that $R_{\beta}$ satisfies the property 
	$(\spadesuit).$
	Then the completion
	$\hat{R}_{\beta}$
	is a complete affine quasi-hereditary algebra
	with property $(\hat{\spadesuit})$
	by Theorem \ref{Thm:compl}.
	Now we complete the proof by the 
	isomorphism
	$\hat{R}_{\beta} \cong \hat{H}_{\beta}$
	of topological $\C$-algebras proved 
	by Brundan-Kleshchev \cite{BK}.
	\end{proof}
	
	\begin{Rem} \label{Rem:order}
	By \cite{ext},
	a partial order for
	an affine quasi-hereditary structure of $R_{\beta}$
	comes from  
	the orbit closure relation on 
	the space
	$\mathfrak{L}(\beta)
	:= \bigoplus_{i \in \Z} \Hom_{\C}(\C^{n_{i}}, \C^{n_{i+1}})
	$ of representations of linear quiver
	of dimension vector $\beta$
	with respect to the action of the group
	$G(\beta) := \prod_{i \in \Z}GL(\C^{n_{i}}).$
	The
	closure relations
	of $G(\beta)$-orbits in $\mathfrak{L}(\beta)$
	has been described  in Abeasis-DelFra-Kraft
	\cite{ADK} Proposition 9.1,
	which asserts that 
	the set $\KP(\beta)$ naturally 
	parametrizes the $G(\beta)$-orbits in
	$\mathfrak{L}(\beta)$
	and  
	our ordering
	$\unlhd$ on $\KP(\beta)$
	corresponds to the opposite 
	closure ordering of orbits.  
	\end{Rem}

	We denote the standard (resp. proper standard,
	proper costandard) module of $\hat{H}_{\beta} \mof$
	associated to $\pi \in \KP(\beta)$
	by $\std_{H}(\pi)$ (resp.\ 
	$\pstd_{H}(\pi), \pcstd_{H}(\pi)$).

	\begin{Rem} \label{Rem:stdH}
	We define a total order $\preceq$ on the set
	of positive roots so that
	  $\alpha(i,j) \preceq \alpha(k,l)$
	  if $i < k$, or $i=k$ and $j \leq l$.
	For each
	$\pi = \{ \pi_{1}, \ldots , \pi_{m} \} \in \KP(\beta)$
	such that $\pi_{1} \succ \pi_{2} \succ
	\cdots \succ \pi_{m}$,
	we give an explicit construction of the standard module
	$\std_{H}(\pi)$ following \cite{BKM}
	(see also \cite{KR} Section 8.4). 
	Let $\pi_{k} = \alpha(i_k, j_k)$ and set
	$n_{k} := j_{k} - i_{k}>0,
	a_{k} := \sum_{l=1}^{k-1} n_{l} +1
	$
	for $1 \leq k \leq m.$
	Then we define an 
	$H_{n_1}\boxtimes \cdots \boxtimes H_{n_m}$-module
	structure on the space $R_{m} :=
	\C[\![z_{1}, \ldots, z_{m}]\!]$ so that
	$x_{l}$ acts on by the multiplication of the scalar
	$i_{k} + (l-a_{k}) + z_{k}$
	if $a_{k} \leq l < a_{k+1}$
	and the action of 
	$\SG_{n_{1}} \times \cdots \times \SG_{n_{m}}$
	is trivial.
	Then we define $\std_{H}(\pi)$ as
	the induced module
	$H_{n} \otimes_{(H_{n_{1}}
	\boxtimes\cdots \boxtimes H_{n_{m}})}
	R_{m}$,
	which
	naturally extends to a module over the
	completion $\hat{H}_{\beta}$.
	By construction, $\std_{H}(\pi)$ has a
	natural action of $R_{m}$
	which commutes with the action of $\hat{H}_{\beta}$.
	Then we have
	$\pstd_{H}(\pi) = \std_{H}(\pi)/\m \std_{H}(\pi)$,
	where $\m$ is the maximal ideal of $R_{m}$,
	and $\pcstd_{H}(\pi) = \pstd_{H}(\pi)^{\circledast}.$
	\end{Rem}

\subsection{The deformed BGG category of $\gl_{m}$}
\label{defBGG}

	Let $m \, (\geq 2)$ be a positive integer and
 	$\g := \gl_{m}(\C) = \mathrm{Mat}_{m}(\C)$ 
 	be the general linear Lie algebra.
	Let $e_{ij} \in \g$ denote the $ij$-matrix unit 
	$(1 \leq i, j \leq m)$.
	Let $\ct$ be the abelian Lie subalgebra of $\g$ consisting of
	diagonal matrices, i.e.\ $\ct = \bigoplus_{i=1}^{m}\C e_{ii}$.
	We have the natural pairing 
	$\langle - , - \rangle : \ct^{*} \times \ct \to \C$
	and identify the space  $\ct^{*}$ with $\C^m$
	by $\ct^{*} \stackrel{\sim}{\to} \C^{m} ; \,
	 \lambda \mapsto
	(\lambda_{1} , \ldots , \lambda_{m}) $
	where $\lambda_{i } = \langle \lambda, e_{ii} \rangle$.
	Note that the symmetric group
	$\SG_{m}$ acts on the space $\ct^{*} = \C^{m}$
	by permuting the coordinates.
	Let $\varepsilon_i \in \ct^{*}$  be the element defined by
	$\langle \varepsilon_i , e_{jj} \rangle = \delta_{ij}$.
	We define the weight lattice by
	$\Lambda := \bigoplus_{i=1}^{m} \Z \varepsilon_{i}$
	and the set of dominant weights
	by
	$\Lambda^{+} 
	:=\{\lambda \in \Lambda \mid \lambda_{1}
	\geq \lambda_{2} \geq \cdots \geq \lambda_{m} \}.$
	Let $\Lambda_{r} := \bigoplus_{i=1}^{m-1}
	\Z (\varepsilon_{i} - \varepsilon_{i+1}) \subset \ct^{*}$
	be the root lattice and
	$\Lambda_{r}^{+} := \sum_{i=1}^{m-1}\Z_{\geq 0}
	(\varepsilon_{i} - \varepsilon_{i+1}) $.
	We define a partial order called the dominance order $\leq$ 
	on $\Lambda$ so that 
	$\lambda \leq \mu$
	if $\mu - \lambda \in \Lambda_{r}^{+}.$
	We set a special dominant weight 
	$\rho$ by $ \rho := (0, -1, -2, \ldots, -m+1).  $
	Let $\n_{+}$ (resp.\ $\n_{-}$) be the nilpotent radical
	of upper (resp.\ lower) triangular matrices.
	We set the standard Borel subalgebra 
	$\be := \ct \oplus \n_{+}$.

	We identify the affine coordinate ring $
	\C[\ct^{*}] = \mathrm{Sym}(\ct)$
	with the polynomial ring 
	$\C [z_1, \ldots z_m]$ of $m$-variables
	by setting $z_i = e_{ii} \in \ct  \subset \C[\ct^{*}].$
	Let 
	$R := \C[\![z_1, \ldots , z_m]\!]$ be
	the completion of $\C[\ct^{*}]$,
	which is a local $\C$-algebra
	with its maximal ideal
	$\m := (z_{1} , \ldots, z_{m})$.

	\begin{Def} \label{Def:Otil}
		We define the 
		deformed BGG
		category $\Otil$ of $\g$ as 
		a full subcategory of the category
		$(U(\g) \boxtimes_{\C}R) \text{-Mod}$
		of all $(\g, R)$-bimodules.
		A $(\g, R)$-bimodule $M$ belongs to $\Otil$
		if the following three conditions hold:
		\begin{enumerate}
		\item $M$ is finitely generated as a $(\g, R)$-bimodule;
		\label{Def:Otil:fg}
		\item $M$ is locally finite over $U(\n_{+})$;
		\label{Def:Otil:ln}
		\item As a $(U(\ct), R)$-bimodule,
		\label{Def:Otil:ss}
		$M$ has a direct sum decomposition
		$ M = \bigoplus_{\lambda \in  \Lambda} 
		M_{(\lambda)} ,$
		where
		$M_{(\lambda)} := 
		\{v \in M \mid (e_{ii}-\lambda_{i}) v = v z_{i},
		\, (1 \leq i \leq m) \}$.
			\end{enumerate}  
	\end{Def}
	We refer the non-zero $(U(\ct), R)$-subbimodule
	$M_{(\lambda)}$ as a
	generalized weight space of weight $\lambda$.
	If $M_{(\lambda)} \neq 0$, the element 
	$\lambda$ is called
	an weight of the module $M$.
	Note that in Definition \ref{Def:Otil} (\ref{Def:Otil:ss})
	we restrict all the weights
	to be integral (i.e.
	belong to $\Lambda$) for simplicity. 
	Since the algebra $U(\g) \boxtimes_{\C} R$ is
	left Noetherian, the category $\Otil$ is a left Noetherian
	abelian category.
	The full subcategory of $\Otil$
	on which the $R$-action is trivial   
	is nothing but
	the usual BGG category $\OO$ of $\g$.
	We define a functor $\varphi
	: \Otil \to \Otil$
	by $\varphi(M) := M\m .$
	Note that this functor $\varphi$ is a subfunctor
	of identity functor $\mathrm{Id}_{\Otil}$
	which preserves epimorphisms.
	By 
	Definition \ref{Def:Otil} (\ref{Def:Otil:ss}),
	we see that
	$M \cong \varprojlim M/M\m^{n}$
	for all $M \in \Otil$.
	Thus the category $\Otil$ is a
	$\varphi$-adically complete category
	in the sense of Subsection \ref{compcat}.
	Because the usual category $\OO$ is Artinian,
	the specialized module $M/M\m$ is of finite length for any
	$M \in \Otil.$

	For each $\lambda \in \Lambda$,
	we define the deformed Verma module by
	$$ \Mtil(\lambda)
	:= U(\g) \otimes _{U(\be)} R_{\lambda - \rho},$$
	where $R_{\mu}$ denotes the 
	$(U(\be), R) $-bimodule defined as follows.
	As a right $R$-module,
	we have $R_{\mu} \cong R $.
	The left action of $U(\be)$
	on $R_{\mu}$ is defined by
	\begin{alignat*}{2}
	e_{ii} v & = v ( \mu_{i} + z_i) \qquad  
	&(1 \leq i \leq m);& \\
	e_{ij} v &= 0  \qquad  &( 1 \leq  i < j \leq m),&
	\end{alignat*}
	where $v \in R_{\mu}.$
	The deformed Verma module
	$\Mtil(\lambda)$ belongs to 
	$\Otil$ for any $\lambda \in \Lambda$.
	Note that the usual Verma module $M(\lambda)$
	is identical to the minimal specialization
	$\Mtil(\lambda)/\Mtil(\lambda) \m$ and 
	the generalized weight space 
	$\Mtil(\lambda)_{(\mu)}$ is a free $R$-module
	of rank $\dim_{\C}M(\lambda)_{(\mu)}$ 
	for each $\mu \in \Lambda.$
	In particular,
	we have $\hhd\Mtil(\lambda) = 
	\hhd M(\lambda) = L(\lambda)$,
	where $L(\lambda)$ is the unique irreducible
	quotient of $M(\lambda)$.
	The set $\{ L(\lambda) \mid \lambda \in \Lambda \}$
	is a complete set of representatives
	of irreducible isomorphism classes of the category $\Otil.$

	For each $\lambda \in \Lambda^{+}$, we define
	the category
	$\Otil_{\lambda}$
	to be the full subcategory
	of $\Otil$
	consisting of modules
	any of whose composition factors are 
	$\simeq L(w\lambda)$
	for some $w \in \SG_{m}.$
	For $\lambda \in \Lambda^{+}$,
	we define the subset $\Pi_{\lambda} $ of $\Lambda$ 
	by $ \Pi_{\lambda} :=
	\{w\lambda \in \Lambda \mid w \in \SG_{m}\}$
	and regard it as a subposet.
	Note that the dominant weight $\lambda$
	is the largest element in the poset $\Pi_{\lambda}$.

	\begin{Prop}[Soergel \cite{HC}, Fiebig \cite{F03}]
	 \label{Prop:Otildecomp}
	We have a decomposition of $\varphi$-adically
	complete category:
	$
	\Otil = \bigoplus_{\lambda \in \Lambda^{+}} 
	\Otil_{\lambda}.
	$
	\end{Prop}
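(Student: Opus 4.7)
The plan is to construct the decomposition via the Harish-Chandra center $Z := Z(U(\g))$. For each $\lambda \in \Lambda$, the action of $Z$ on the canonical generator of $\Mtil(\lambda)$ defines an algebra homomorphism $\chi_\lambda : Z \to R$; under the Harish-Chandra isomorphism $Z \cong \mathrm{Sym}(\ct)^{\SG_m, \,\cdot}$, together with the $\rho$-shift built into the definition of $\Mtil(\lambda)$, this character is identified with the restriction of $\mathrm{Sym}(\ct) \to R,\ e_{ii} \mapsto \lambda_i + z_i$. The first step is to verify that $\chi_\lambda = \chi_\mu$ holds if and only if $\mu \in \SG_m\lambda$: the ``if'' direction is $\SG_m$-invariance, while for ``only if'' I specialize at the maximal ideal $\m \subset R$ and invoke the classical fact that $\SG_m$-invariant polynomials separate $\SG_m$-orbits on $\ct^*$.

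Given $M \in \Otil$, I decompose $M$ by lifting idempotents through the surjection $\End_{\Otil}(M) \twoheadrightarrow \End_{\Otil}(M/M\m)$. The specialized module $M/M\m$ lies in the usual BGG category $\OO$, which is Artinian and admits the classical block decomposition $\OO = \bigoplus_{\lambda \in \Lambda^+}\OO_\lambda$ (the linkage principle, obtained from step one reduced modulo $\m$). This yields a finite orthogonal decomposition $1 = \sum_{\lambda}\bar{e}_\lambda$ in $\End_{\Otil}(M/M\m)$. Any $f \in \End_{\Otil}(M)$ with $f(M) \subset M\m$ satisfies $f^n(M) \subset M\m^n$ for all $n$ and is therefore topologically nilpotent in view of $M \cong \varprojlim_n M/M\m^n$; hence the kernel of $\End_{\Otil}(M) \twoheadrightarrow \End_{\Otil}(M/M\m)$ lies in the Jacobson radical, and the $\bar{e}_\lambda$ lift uniquely to orthogonal central idempotents $e_\lambda \in \End_{\Otil}(M)$. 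Setting $M_\lambda := e_\lambda M$ produces the desired decomposition $M = \bigoplus_{\lambda \in \Lambda^+} M_\lambda$ with $M_\lambda \in \Otil_\lambda$ and $M_\lambda = 0$ for all but finitely many $\lambda$. As each $e_\lambda$ is $Z$-linear and commutes with the right $R$-action, we have $\varphi(M_\lambda) = M_\lambda\m$, so the decomposition is one of $\varphi$-adically complete categories.

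The main obstacle is ensuring that the classical block decomposition of $\OO$ lifts coherently through the $\m$-adic tower. The crucial enabling input is the topological nilpotence of the kernel of $\End_{\Otil}(M) \twoheadrightarrow \End_{\Otil}(M/M\m)$, which follows from the generalized weight space hypothesis of Definition \ref{Def:Otil}~(\ref{Def:Otil:ss}); once this is in place, idempotent lifting is routine and the proposition follows.
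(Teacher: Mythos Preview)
The paper does not prove this proposition---it is cited from Soergel and Fiebig without argument---so there is nothing on the paper's side to compare against, and I comment only on your argument. Your strategy is reasonable but has two gaps. First, you assert that $\End_{\Otil}(M) \twoheadrightarrow \End_{\Otil}(M/M\m)$ is a surjection; this is neither justified nor true in general (an extension in $\Otil$ can split modulo $\m$ without splitting over $R$, producing idempotents downstairs that do not lift). What you need is only that the particular block idempotents $\bar e_\lambda$ lie in the image, which holds because on the finite-length module $M/M\m$ each $\bar e_\lambda$ is a polynomial in the action of $Z$, and $Z$ already acts on $M$; but you must say this rather than claim full surjectivity.

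Second, and more importantly, you build $e_\lambda \in \End_{\Otil}(M)$ object by object and never check that a morphism $f \colon M \to N$ carries $M_\lambda$ into $N_\lambda$; without this you have decomposed objects, not the category. Your claim that the lifted $e_\lambda$ are \emph{central} is in fact correct (uniqueness of the lift forces it) and would yield functoriality via the standard $M\oplus N$ trick, but you do not justify centrality, and abstract idempotent lifting does not make it automatic. The clean fix---essentially the route in the cited references---is to decompose each $M/M\m^{n}$ by generalized central character and pass to the inverse limit: since every finite-level decomposition is produced by the action of $Z$, which commutes with all morphisms of $\Otil$, naturality is then immediate.
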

	
	We call the category
	$\Otil_{\lambda}$ the block of the category $\Otil$
	associated to the dominant weight $\lambda$.

	\begin{Thm}[Soergel, Fiebig]
	 \label{Thm:Otil}
	Let $\lambda $ be a dominant weight.
	\begin{enumerate}
	\item \label{Thm:Otil:Verma}
	For each weight $\mu \in \Pi_{\lambda}$,
	the deformed Verma module $\Mtil(\mu)$
	belongs to $\Otil_{\lambda}$.
	The set $\{ L(\mu) \mid \mu \in \Pi_{\lambda}\}$
	is the complete set of representatives of
	irreducible isomorphism classes of $\Otil_{\lambda}$;

	\item \label{Thm:Otil:proj}
	The category $\Otil$ has enough projectives.
	For each $\mu \in \Pi_{\lambda}$,
	there exists a projective cover $P(\mu)$
	of the simple module $L(\mu)$ 
	in the block $\Otil_{\lambda}$.
	It fits into an exact sequence
	$
	0 \to K(\mu) \to P(\mu) \to \Mtil(\mu) \to 0,
	$
	where $K(\mu)$ has a filtration
	whose subquotients are $\simeq \Mtil(\nu)$ for some
	$\nu \in \Pi_{\lambda}$ with $\nu > \mu;$

	\item \label{Thm:Otil:free}
	The $R$-module
	$\Hom_{\Otil}(P(\mu), \Mtil(\nu))$
	is free of finite rank
	for any $\mu, \nu \in \Pi_{\lambda}$.
	\end{enumerate}
	\end{Thm}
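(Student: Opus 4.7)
The plan is to deduce each assertion from its classical counterpart in the BGG category $\OO$ via the specialization functor $M \mapsto M/M\m$ together with the $\varphi$-adic completeness of $\Otil$. For (1), every composition factor $L(\nu)$ of $\Mtil(\mu)$ appears, for some $k \geq 1$, as a composition factor of the finite-length specialization $\Mtil(\mu)/\Mtil(\mu)\m^{k}$. Since each $R$-module $\m^{k}/\m^{k+1}$ is finite-dimensional over $\C$ and $\Mtil(\mu)$ is $R$-free on each generalized weight space, the associated graded module $\mathrm{gr}_{\m}\Mtil(\mu)$ is, as a $\g$-module, a direct sum of copies of the classical Verma $M(\mu)$. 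Thus every composition factor of $\Mtil(\mu)$ is a composition factor of $M(\mu)$; the linkage principle for integral blocks of $\OO(\gl_{m})$ then forces each such factor to be of the form $L(\nu)$ with $\nu$ in the $\SG_{m}$-orbit $\Pi_{\lambda}$ of $\mu$, so $\Mtil(\mu) \in \Otil_{\lambda}$. The classification of simples in the block follows from Proposition \ref{Prop:Otildecomp}.

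For (2), I would construct $P(\mu)$ as a projective limit of projective covers in Artinian truncations. For each $k \geq 1$, let $\Otil_{\lambda}^{(k)}$ denote the full subcategory of $\Otil_{\lambda}$ consisting of modules $M$ with $M\m^{k} = 0$; such an $M$ admits a filtration $M \supset M\m \supset \cdots \supset M\m^{k} = 0$ whose subquotients lie in the Artinian category $\OO_{\lambda}$, so $\Otil_{\lambda}^{(k)}$ is itself Artinian Schurian with enough projectives, and the projective cover $P_{k}(\mu)$ of $L(\mu)$ exists there. The natural surjections $P_{k+1}(\mu) \twoheadrightarrow P_{k}(\mu)$ assemble into an inverse system whose limit $P(\mu) := \varprojlim_{k} P_{k}(\mu)$ lies in $\Otil_{\lambda}$ by $\varphi$-adic completeness; projectivity of $P(\mu)$ in $\Otil$ is verified via Lemma \ref{Lem:limvan} applied to $\Ext^{1}(P(\mu), -)$ along a separated filtration of the test object. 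The standard filtration of the kernel $K(\mu)$ by deformed Vermas $\Mtil(\nu)$, $\nu > \mu$, is lifted from the BGG standard filtration of the classical cover $P_{1}(\mu) \in \OO_{\lambda}$: each $P_{k}(\mu)$ inherits such a filtration, and these stabilize as $k$ grows because the multiplicities $(P_{k}(\mu) : \Mtil(\nu))$ are bounded independently of $k$ by classical BGG reciprocity.

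For (3), the classical BGG theorem gives $\dim_{\C}\Hom_{\OO}(M(\sigma), M(\nu)) \in \{0,1\}$, and $R$-flatness of the deformed Vermas then implies $\Hom_{\Otil}(\Mtil(\sigma), \Mtil(\nu))$ is free of rank $0$ or $1$ over $R$. An iterated five-lemma argument along the standard filtration of $P(\mu)$ from (2), combined with Lemma \ref{Lem:NLC}, yields freeness of $\Hom_{\Otil}(P(\mu), \Mtil(\nu))$ as a finite-rank $R$-module. The main obstacle throughout is (2): verifying that the inverse limit of projective covers in the Artinian truncations is itself a projective cover in the ambient non-Artinian category $\Otil_{\lambda}$, and that the standard filtration survives the limit process in a compatible way. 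This is precisely where Soergel's Endomorphismensatz \cite{HC} and Fiebig's deformed block decomposition \cite{F03} do the essential work, and I would primarily translate their constructions into the complete-category framework of Subsection \ref{completeaffine}.
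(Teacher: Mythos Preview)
The paper does not prove this theorem at all: its entire proof is a citation to Soergel \cite{KategorieO} Theorem~6, Fiebig \cite{F03} Theorem~2.7, and (for part~(\ref{Thm:Otil:free})) Soergel \cite{HC} Theorem~5. Your proposal, by contrast, sketches an actual argument---essentially a reconstruction of Fiebig's inverse-limit approach---and you correctly flag that the hard steps (finite generation and projectivity of the limit $\varprojlim_k P_k(\mu)$, and the survival of the standard filtration) are exactly where the cited work is indispensable.

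Two soft spots worth noting in your sketch, even as an outline. In part~(2), the claim that the multiplicities $(P_k(\mu):\Mtil(\nu))$ stabilize by ``classical BGG reciprocity'' is delicate: $P_k(\mu)$ lives in the truncation where $\m^k$ acts by zero, so its natural filtration has subquotients $\Mtil(\nu)/\Mtil(\nu)\m^k$ rather than $\Mtil(\nu)$, and one must argue separately that the system is eventually constant modulo each $\m^k$---this is what Fiebig's base-change formalism handles. In part~(3), a five-lemma argument along the $\std$-filtration of $P(\mu)$ does not directly yield $R$-freeness: applying $\Hom_{\Otil}(-,\Mtil(\nu))$ to $0\to K(\mu)\to P(\mu)\to\Mtil(\mu)\to 0$ gives a four-term sequence terminating in $\Ext^1_{\Otil}(\Mtil(\mu),\Mtil(\nu))$, and without controlling that $\Ext^1$ you cannot conclude freeness of the middle term from freeness of the outer two. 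Soergel's argument in \cite{HC} instead goes through base change to the generic point and a flatness criterion, which is genuinely different from what you wrote. None of this is fatal given your closing disclaimer, but be aware that the proof is not a routine translation.
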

	\begin{proof}
	See Soergel
	\cite{KategorieO} Theorem 6 or
	Fiebig
	\cite{F03} Theorem 2.7.
	(\ref{Thm:Otil:free}) is due to
	Soergel \cite{HC} Theorem 5.
	\end{proof}

	Let $\widetilde{\mathcal{M}}$ be the full subcategory
	of $(U(\g) \boxtimes_{\C} R) \text{-Mod}$ 
	consisting of modules $M$
	having a generalized weight space decomposition
	$M := \bigoplus_{\lambda \in \Lambda}M_{(\lambda)}$
	with each generalized weight space not necessary
	being finitely generated over $R$.
	For each $M \in \widetilde{\mathcal{M}}$,
	we define $\g$-module structures
	on 
	its restricted dual $M^{\vee} :=
	 \bigoplus_{\lambda \in \Lambda} 
	\Hom_{\C}(M_{(\lambda)}, \C)$ and
	its restricted topological dual
	$D(M) := \bigoplus_{\lambda \in \Lambda} 
	\Fun(M_{(\lambda)}, \C)$
	by twisting the natural right $\g$-action 
	by the transpose map,
	where we define $\displaystyle \Fun(N, \C)
	:= \varinjlim \Hom_{\C}(N/\m^{n}N, \C)$
	for an $R$-module $N$.
	Then we have two contravariant
	endofunctors $(-)^{\vee}$ and $D$
	satisfying that $(M^{\vee})_{(\lambda)} = 
	\Hom_{\C}(M_{(\lambda)}, \C)$ 
	and $D(M)_{(\lambda)}
	= \Fun(M_{(\lambda)}, \C).$
	Note that the functor $(-)^{\vee}$ is exact
	and the functor $D$ is left exact.
	If all generalized weight spaces of
	$M \in \widetilde{\mathcal{M}}$
	are free $R$-modules of finite rank,
	then we have $\mathbf{R}^{1}D(M)=0$
	and $(D(M))^{\vee} \cong M.$  
	Although these
	functors do not preserve the category $\Otil$,
	they preserve the category $\OO$,
	on which we have $D = (-)^{\vee}$
	and they are involutive.
	We have $L^{\vee} \cong L$ 
	for any irreducible module $L \in \OO$.
	For each $\lambda \in \Lambda$, 
	we define the dual Verma module
	to be $M(\lambda)^{\vee}.$
	
	\begin{Prop} \label{Prop:BGGrecip}
	Let $\lambda, \mu \in \Lambda$. Then we have:
	$$ \displaystyle
	\Ext_{\Otil}^{i}(\Mtil(\lambda), M(\mu)^{\vee}) =
					\begin{cases}
					\C
					& i=0,\lambda = \mu ;\\
					0 & \text{otherwise}.
					\end{cases}
	$$
	\end{Prop}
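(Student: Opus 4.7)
The plan is to reduce the claim to the corresponding classical statement in the undeformed BGG category $\OO$, namely $\Ext_{\OO}^{i}(M(\lambda), M(\mu)^{\vee}) = \delta_{i,0}\delta_{\lambda, \mu} \C$, which is a well-known result (Humphreys, Soergel). The bridge is the specialization functor $\mathrm{sp} : \Otil \to \OO$ defined by $\mathrm{sp}(M) := M/M\m$. Since the right $R$-action on every object of $\OO$ is trivial, every $(\g, R)$-bimodule morphism from $M \in \Otil$ to an object of $\OO$ must factor through $\mathrm{sp}(M)$; thus $\mathrm{sp}$ is left adjoint to the exact inclusion $\iota : \OO \hookrightarrow \Otil$.

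First I would observe that $\Mtil(\lambda) \cong U(\n_{-}) \otimes_{\C} R$ as a right $R$-module by PBW, so $\Mtil(\lambda)$ is $R$-free. In particular $\mathrm{sp}(\Mtil(\lambda)) \cong M(\lambda)$ and $\mathrm{Tor}^{R}_{q}(\Mtil(\lambda), \C) = 0$ for $q > 0$.

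Next I take a projective resolution $P^{\bullet} \to \Mtil(\lambda)$ in $\Otil_{\lambda}$, whose existence is guaranteed by Theorem \ref{Thm:Otil} (\ref{Thm:Otil:proj}). Because $\iota$ is exact, its left adjoint $\mathrm{sp}$ preserves projective objects, so each $\mathrm{sp}(P^{i})$ is projective in $\OO$. Moreover, since $\Pi_{\lambda} = \SG_{m}\lambda$ is finite, Theorem \ref{Thm:Otil} (\ref{Thm:Otil:proj}) shows that every projective cover $P(\nu)$ admits a finite $\Mtil$-filtration and is therefore $R$-free; consequently $\mathrm{sp}(P^{\bullet})$ remains exact and forms a projective resolution of $M(\lambda)$ in $\OO$. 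Combining these with the adjunction yields
\[
\Ext_{\Otil}^{i}(\Mtil(\lambda), M(\mu)^{\vee}) = H^{i}\bigl(\Hom_{\OO}(\mathrm{sp}(P^{\bullet}), M(\mu)^{\vee})\bigr) = \Ext_{\OO}^{i}(M(\lambda), M(\mu)^{\vee}),
\]
which equals $\delta_{i,0}\delta_{\lambda,\mu} \C$ by the classical result.

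The main subtlety I expect is justifying that $\mathrm{sp}$ carries the chosen projective resolution to an exact complex, which reduces to $R$-flatness of every $P(\nu)$. As indicated, this is nearly immediate from the $\Mtil$-filtration of Theorem \ref{Thm:Otil} (\ref{Thm:Otil:proj}) together with $R$-freeness of each $\Mtil(\nu)$, but one has to be slightly careful about how the filtration interacts with $R$. Alternatively, one can avoid picking a resolution and invoke a Grothendieck spectral sequence $\Ext^{p}_{\OO}(L_{q}\mathrm{sp}(\Mtil(\lambda)), M(\mu)^{\vee}) \Rightarrow \Ext^{p+q}_{\Otil}(\Mtil(\lambda), M(\mu)^{\vee})$, which collapses because $L_{q}\mathrm{sp}(\Mtil(\lambda)) = \mathrm{Tor}^{R}_{q}(\Mtil(\lambda), \C) = 0$ for $q > 0$.
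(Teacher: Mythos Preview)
Your argument is correct and takes a genuinely different route from the paper's. The paper proves the case $i=0$ by the same adjunction/specialization observation you use, but for $i=1$ it argues directly with an extension $0 \to M(\mu)^{\vee} \to E \to \Mtil(\lambda) \to 0$: when $\lambda \not < \mu$ the weight $\lambda$ is maximal in $E$ and the universal property of $\Mtil(\lambda)$ splits the sequence; when $\lambda < \mu$ the paper applies its topological dual $D$ (using $R$-freeness of $\Mtil(\lambda)$ to control $\mathbf{R}^{1}D$), observes that $\mu$ is then maximal with a one-dimensional weight space, and splits via the universal property of $M(\mu)$. Higher $i$ are obtained by dimension shifting. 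By contrast, you package the whole computation into the adjunction $\mathrm{sp} \dashv \iota$ together with $R$-flatness of projectives (from their $\Mtil$-filtrations) to identify $\Ext_{\Otil}^{i}(\Mtil(\lambda), M(\mu)^{\vee})$ with $\Ext_{\OO}^{i}(M(\lambda), M(\mu)^{\vee})$ in one stroke, and then quote the classical result. Your approach is cleaner and handles all $i$ uniformly; the paper's approach is more self-contained in that it only imports the $i=0$ case from $\OO$ and establishes $i=1$ by hand, at the cost of introducing the auxiliary functors $D$ and $(-)^{\vee}$. One small point worth stating explicitly in your write-up: when $\lambda$ and $\mu$ lie in different blocks both sides vanish by Proposition~\ref{Prop:Otildecomp}, so restricting to $\Otil_{\lambda}$ is harmless; and the inductive flatness of the syzygies $Z^{i}$ (which makes $\mathrm{sp}(P^{\bullet})$ exact) follows from the long exact $\mathrm{Tor}^{R}$ sequence starting from the $R$-freeness of $\Mtil(\lambda)$ and of each $P^{i}$.
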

	\begin{proof}
		For $i=0$, we observe that
		$\Hom_{\Otil}(\Mtil(\lambda), M(\mu)^{\vee})
		\cong \Hom_{\OO}(M(\lambda), M(\mu)^{\vee})$
		and then the assertion 
		is well-known (see \cite{Hum} Theorem 3.3).

		For $i=1$, we consider an extension in $\Otil$:
		\begin{equation} \label{ext1}
		0 \to M(\mu)^{\vee} \to E \to \Mtil(\lambda) \to 0.
		\end{equation}
		If $\lambda \not < \mu$, then
		the weight $\lambda$ is a maximal weight of $E$.
		By the universal property of the deformed Verma module
		$\Mtil(\lambda)$, we see that the sequence (\ref{ext1})
		must be split.
		If $\lambda < \mu$, we apply the functor $D$
		defined above to the sequence (\ref{ext1}) to get:
		\begin{equation} \label{ext2}
		0 \to D(\Mtil(\lambda))
		\to D(E) \to M(\mu) \to
		\mathbf{R}^{1}D(\Mtil(\lambda))=0.
		\end{equation}
		Since $\lambda < \mu$,
		the weight $\mu$ is maximal in $D(E)$.
		Moreover, $R$ acts on $D(E)_{(\mu)}$ trivially
		because it is $1$-dimensional.
		Therefore by the universal property of the Verma
		module $M(\mu)$, we see that the sequence
		(\ref{ext2}) is split.
		Since $D(\Mtil(\lambda))^{\vee} \cong
		\Mtil(\lambda)$,
		by applying $(-)^{\vee}$ on the sequence
		(\ref{ext2}),
		we observe that the sequence (\ref{ext1})
		is also split.
		Thus we have 
		$\Ext_{\Otil}^{1}(\Mtil(\lambda), M(\mu)^{\vee}) = 0$
		for any $\lambda, \mu \in \Lambda.$

		The cases $i >1$ follow from the case $i=1$
		by a standard argument (cf.\  \cite{Hum} Section 6.12).
	\end{proof}
	
	We summarize the above results as follows. 
	
	\begin{Thm} \label{Thm:Otilaffin}
	 For each $\lambda \in \Lambda^{+},$ the block
	 $\Otil_{\lambda}$ of the category $\Otil$
	 is a complete affine highest weight category
	 with index poset $\Pi_{\lambda}$
	 satisfying the property $(\hat{\spadesuit})$.
	 The standard
	 (resp.\ proper standard, proper costandard)
	 modules of $\Otil_{\lambda}$
	 are the deformed Verma
	 modules $\Mtil(\mu)$
	 (resp.\ Verma modules $M(\mu)$, 
	 dual Verma modules $M(\mu)^{\vee}$).
	  \end{Thm}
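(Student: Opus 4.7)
The plan is to verify each defining property of a complete affine highest weight category (Definition \ref{Def:comphwc}) together with $(\hat{\spadesuit})$ by matching each axiom with an assembled result from Section \ref{application}.

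First I would check $\varphi$-adic completeness and property $(\clubsuit)$: the completeness was noted after Definition \ref{Def:Otil} from the generalized weight space decomposition; $(\clubsuit)_{1}$ holds because $U(\g)\boxtimes_{\C} R$ is left Noetherian; $(\clubsuit)_{2}$ is Theorem \ref{Thm:Otil} (\ref{Thm:Otil:proj}); and $(\clubsuit)_{3}$ reduces to the Artinianness of $M/M\m$ in the usual $\OO$. Next I would identify $\std(\mu)=\Mtil(\mu)$: Theorem \ref{Thm:Otil} (\ref{Thm:Otil:proj}) gives a surjection $P(\mu)\twoheadrightarrow\Mtil(\mu)$ whose kernel $K(\mu)$ is filtered by $\Mtil(\nu)$ with $\nu>\mu$. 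The inclusion $\sum_{\sigma\not\leq\mu,\,f}\Ima f\subseteq K(\mu)$ holds because all weights of $\Mtil(\mu)$ are $\leq\mu$, so any $f\colon P(\sigma)\to\Mtil(\mu)$ with $\sigma\not\leq\mu$ vanishes (otherwise its image would contain $L(\sigma)$ as a composition factor); the reverse inclusion is obtained by lifting each $\Mtil(\nu)$-subquotient of $K(\mu)$ through a map $P(\nu)\to P(\mu)$ by projectivity, using the finiteness of $\Pi_{\lambda}$. Once this identification is in place, conditions (\ref{Def:comphwc:str}) and (\ref{Def:comphwc:free}) of Definition \ref{Def:comphwc} are direct restatements of Theorem \ref{Thm:Otil} (\ref{Thm:Otil:proj}) and (\ref{Thm:Otil:free}), respectively, while (\ref{Def:comphwc:end}) follows from the computation $\End_{\Otil}(\Mtil(\mu)) \cong R = \C[\![z_{1},\ldots,z_{m}]\!]$ via the universal property of the induced module together with the freeness of rank one of the generalized weight space $\Mtil(\mu)_{(\mu)}$ over $R$.

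For the proper costandard modules, I would invoke the complete analogue of Theorem \ref{Thm:BHBGG} (alluded to at the end of Subsection \ref{completeaffine}) characterizing $\pcstd(\mu)$ by the $\Ext$-orthogonality against standards; Proposition \ref{Prop:BGGrecip} asserts precisely that $M(\mu)^{\vee}$ satisfies this property with respect to $\std(\sigma)=\Mtil(\sigma)$, yielding $\pcstd(\mu)\cong M(\mu)^{\vee}$. For the proper standard modules, I would use the general identity $\pstd(\mu)=\std(\mu)/\std(\mu)\cdot B_{\mu}^{+}$, where $B_{\mu}^{+}\subset B_{\mu}=R$ is the maximal ideal; under the identifications above this specializes to $\pstd(\mu)=\Mtil(\mu)/\Mtil(\mu)\m = M(\mu)$.

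Finally, for $(\hat{\spadesuit})$, I would take $Z:=R$ acting on $P:=\bigoplus_{\mu\in\Pi_{\lambda}}P(\mu)$ by right multiplication, which sits inside $H:=\End_{\Otil}(P)^{\mathrm{op}}$ as a central local subalgebra with residue field $\C$. Finite generation of $H$ over $R$ follows from Theorem \ref{Thm:Otil} (\ref{Thm:Otil:free}) combined with the finite $\Mtil$-filtration of each $P(\nu)$ produced inductively from Theorem \ref{Thm:Otil} (\ref{Thm:Otil:proj}) and the finiteness of $\Pi_{\lambda}$. I expect the most delicate step to be the verification of the formula $\pstd(\mu)=\std(\mu)/\std(\mu)B_{\mu}^{+}$, which is standard in the graded setting of Section \ref{affhwc} but in the complete setting is only implicit in the sentence ``quite similar to those of graded cases'' at the end of Subsection \ref{completeaffine}; it requires carefully unpacking Definition \ref{Def:comphwc} to reconcile the defining presentation of $\pstd(\pi)$ via $P(\pi)$ with the formula via $\std(\pi)$ and the augmentation ideal of $B_{\pi}$.
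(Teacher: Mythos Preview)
Your proposal is correct and follows essentially the same approach as the paper: the paper presents Theorem \ref{Thm:Otilaffin} with the sentence ``We summarize the above results as follows'' and gives no further argument, so the intended proof is precisely the axiom-by-axiom verification you outline, drawing on the discussion after Definition \ref{Def:Otil}, Theorem \ref{Thm:Otil}, and Proposition \ref{Prop:BGGrecip}. One trivial slip: the rank-one highest weight space of $\Mtil(\mu)$ is $\Mtil(\mu)_{(\mu-\rho)}$, not $\Mtil(\mu)_{(\mu)}$, under the paper's normalization $\Mtil(\lambda)=U(\g)\otimes_{U(\be)}R_{\lambda-\rho}$.
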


\subsection{The Arakawa-Suzuki functor}
\label{AS}

	Let $V := \C^m$ be the vector representation
	of $\g = \gl_m$.
	We denote by $\{v_{i}\}$ the standard basis of $V$.
	The $\g$-action on $V$ is written explicitly as
		$ e_{ij} v_{k} = \delta_{jk} v_i .$
	We also write the dual space of $V$ by 
	$\bar{V} := \Hom_{\C}(V ,\C)$.

	Define the Casimir operator $\Omega \in 
	U(\g)\otimes U(\g)$ by 
	$\Omega := \sum_{1 \leq i,j \leq m}e_{ij}\otimes e_{ji}$.
	If $M_1$ and $M_2$ are both 
	$\g$-modules, 
	then $\Omega$ acts on 
	the tensor module $M_1 \otimes M_2$.
	Let $C := \sum_{1 \leq i, j \leq m} e_{ij} e_{ji}$ 
	be the Casimir element of $\g$, 
	which is a central element of $U(\g)$.
	We have $\Omega = \frac{1}{2}(\delta(C)-C \otimes 1 
	-1 \otimes C) \in U(\g) \otimes U(\g)$, 
	where
	$\delta : U(\g)\to U(\g)\otimes U(\g)$ denotes 
	the comultiplication of $U(\g)$ defined 
	by $\delta(X) := X\otimes 1 + 1 \otimes X$ 
	for $X \in \g$.
	Thus we have $\Omega \in \End_{\g}(M_1 \otimes M_2).$
	Note that the action of $\Omega$ 
	on the tensor representation
	$V \otimes V$ is the permutation of the tensor factors.

	Let $M_0, M_1,  \ldots , M_n$ be $\g$-modules.
	We define the linear operator $\Omega^{(a,b)}$ 
	for $0 \leq a , b \leq n$ on the tensor product
	$M_0 \otimes M_1 \otimes \cdots \otimes M_n$ 
	as $\Omega^{(a,b)} := \sum_{1 \leq i , j \leq m}
	e_{ij}^{(a)}e_{ji}^{(b)}$,
	where we define $X^{(a)} := 
	1^{\otimes a} \otimes X \otimes 1^{\otimes (n-a)} 
	\in U(\g)^{\otimes (n+1)}$ for $X \in \g$.
	Note that $\Omega^{(a,b)} = \Omega^{(b,a)}$ 
	and $\Omega^{(a,b)}$ commutes with the $\g$-action on
	$M_0 \otimes M_1 \otimes \cdots \otimes M_n$.

	\begin{Prop}[Arakawa-Suzuki \cite{AS}, \cite{Suzuki}] 
	\label{Prop:AS}
		Let $M$ be a $\g$-module.
		Then the formula
		$s_{i} = \Omega^{(i,i+1)} \,\, (1 \leq i < n), \,
		x_i =  \sum_{0\leq j < i}\Omega^{(i,j)} \,\, 
		(1 \leq i \leq n)
		$
		defines a right action of $H_n$ on 
		the tensor $\g$-module $M \otimes V^{\otimes n}$,
		and thus we have a $\C$-algebra homomorphism 
		$H_{n} \to 
		\End_{\g}(M \otimes V^{\otimes n})^{\mathrm{op}}$.
	\end{Prop}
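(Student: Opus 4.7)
The plan is to verify the defining relations of $H_n$ directly as operator identities on $M \otimes V^{\otimes n}$. Let $\Psi$ denote the proposed assignment $s_i \mapsto \Omega^{(i,i+1)}$, $x_i \mapsto \sum_{j<i} \Omega^{(i,j)}$. Since the action is on the right (equivalently, $\Psi$ is a homomorphism into the opposite algebra), each relation $h_1 h_2 = h_3$ in $H_n$ must be checked as the operator identity $\Psi(h_2) \circ \Psi(h_1) = \Psi(h_3)$.

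The symmetric-group-type relations $s_i^2 = 1$, the braid $s_i s_{i+1} s_i = s_{i+1} s_i s_{i+1}$, and $s_i s_j = s_j s_i$ for $|i-j|>1$ are immediate: since $\Omega$ acts as the flip on $V \otimes V$, the operators $\Omega^{(i,i+1)}$ realize the tautological $\SG_n$-action permuting the tensor positions $1,\ldots,n$ (while the factor $M$ stays fixed). Likewise $s_i x_j = x_j s_i$ for $j \notin \{i, i+1\}$ is clear when $j < i$ (disjoint supports); for $j > i+1$ one splits $x_j = \sum_{k \neq i, i+1} \Omega^{(j,k)} + \Omega^{(j,i)} + \Omega^{(j,i+1)}$ and uses that the last two summands combine into an expression invariant under conjugation by $\Omega^{(i,i+1)}$, while the remaining summands have supports disjoint from $\{i, i+1\}$.

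For the pairwise commutativity $x_i x_j = x_j x_i$, the key device is the identity $2\Omega = \delta(C) - C \otimes 1 - 1 \otimes C$, which lets us rewrite the partial sums as
$$T_i := \sum_{k=1}^{i} x_k = \tfrac{1}{2}\bigl(C^{(0,\ldots,i)} - \textstyle\sum_{a=0}^{i} C^{(a)}\bigr),$$
where $C^{(0,\ldots,i)}$ denotes the action of $C$ on the first $i+1$ tensor factors via the iterated comultiplication. The single-factor operators $C^{(a)}$ commute with every $\Omega^{(b,c)}$ because $C$ is central in $U(\g)$. Moreover, by coassociativity of $\delta$ together with the primitivity of Lie algebra elements (which lets one split the iterated coproduct of $X \in \g$ into a sum of two terms supported on disjoint ranges of tensor factors), one verifies that $C^{(0,\ldots,i)}$ and $C^{(0,\ldots,j)}$ commute as endomorphisms of $M \otimes V^{\otimes n}$ for all $i,j$. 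Hence the partial sums $T_i$ commute pairwise, and so do the $x_i$'s.

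Finally, the cross relation $x_{i+1} s_i = s_i x_i + 1$ becomes the operator identity $\Omega^{(i,i+1)} \circ x_{i+1} = x_i \circ \Omega^{(i,i+1)} + \mathrm{id}$. Splitting off the top term $\Omega^{(i+1,i)}$ of $x_{i+1}$, one computes
$$\Omega^{(i,i+1)} \sum_{j=0}^{i} \Omega^{(i+1,j)} = \bigl(\Omega^{(i,i+1)}\bigr)^{2} + \sum_{j<i} \Omega^{(i,i+1)} \Omega^{(i+1,j)} = \mathrm{id} + \sum_{j<i} \Omega^{(i,j)} \Omega^{(i,i+1)},$$
which equals $\mathrm{id} + x_i \circ \Omega^{(i,i+1)}$. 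Here we use $(\Omega^{(i,i+1)})^2 = \mathrm{id}$ (the square of the transposition) and the conjugation identity $\Omega^{(i,i+1)} \Omega^{(i+1,j)} = \Omega^{(i,j)} \Omega^{(i,i+1)}$ for $j \neq i, i+1$, which expresses that conjugation by the transposition of positions $i$ and $i+1$ relabels $\Omega^{(i+1,j)}$ as $\Omega^{(i,j)}$. The main technical point is the commutativity of the $x_i$'s; once it is reduced to the commutativity of iterated coproducts of the central element $C$, every other relation collapses to a one-line calculation.
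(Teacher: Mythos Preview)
Your verification is correct. The paper itself does not give a proof of this proposition; it is quoted with attribution to Arakawa--Suzuki \cite{AS}, \cite{Suzuki} and used as input. Your direct check of the $H_n$-relations is the standard one: the symmetric-group relations and the cross relation reduce to the fact that $\Omega$ acts as the flip on $V\otimes V$ together with the conjugation identity $\Omega^{(i,i+1)}\Omega^{(i+1,j)}=\Omega^{(i,j)}\Omega^{(i,i+1)}$, and the commutativity of the $x_i$'s via the partial sums $T_i=\tfrac12\bigl(C^{(0,\ldots,i)}-\sum_{a\le i}C^{(a)}\bigr)$ is exactly the classical argument. Your justification for $[C^{(0,\ldots,i)},C^{(0,\ldots,j)}]=0$ is valid: for $X\in\g$ primitive one has $\delta^{(j)}(X)=\sum_{a\le i}X^{(a)}+\sum_{i<a\le j}X^{(a)}$, each summand commuting with $\delta^{(i)}(C)\otimes 1^{\otimes(j-i)}$ (the first by centrality of $C$, the second by disjoint support), whence $\delta^{(j)}(C)$ does as well.
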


	\begin{Def} \label{Def:AS}
	By taking $M=\Mtil(\rho)$ in Proposition \ref{Prop:AS} above,
	we define the Arakawa-Suzuki functor
	$$ \Phi =\Phi^{m}_{n} : \Otil(\gl_{m}) \to H_{n}\text{-Mod} 
	; \, M \mapsto
	\Hom_{\Otil}(\Mtil(\rho)\otimes V^{\otimes n},M).$$
	\end{Def}

	Note that the module $\Mtil(\rho)$ and
	hence $\Mtil(\rho)\otimes V^{\otimes n}$
	is projective in $\Otil$,
	since $\rho$ is a dominant weight.
 	Therefore the Arakawa-Suzuki functor
	$\Phi$ is an exact functor.
	
	\begin{Rem}
	The original definition \cite{AS}, \cite{Suzuki} of 
	Arakawa-Suzuki functor uses the 
	non-deformed Verma modules of
	general dominant weights,
	although we use 
	the deformed Verma module of
	a special dominant weight
	$\rho$ for simplicity.
	Our discussion below is valid
	for any regular dominant weights
	instead of $\rho$
	by some obvious modifications.
	\end{Rem}

	\begin{Lem} \label{Lem:commute}
		Let $M$ and $N$ be $\g$-modules.
		The Casimir operator defines the following linear operators
		$$\begin{array}{lll}
		\Omega^{*}_{M,V} : &\Hom_{\g}(M\otimes V, N) 
		\to \Hom_{\g}(M \otimes V, N) ;& f \mapsto 
		f \circ \Omega, \\
		\Omega_{\bar{V},N} :
		&\Hom_{\g}(M, \bar{V} \otimes N)
		\to \Hom_{\g}(M, \bar{V} \otimes N); 
		& f \mapsto \Omega \circ f .\\
		\end{array} $$
		We also have the natural isomorphism
		$\Hom_{\g}(M\otimes V, N) 
		\cong \Hom_{\g}(M, \bar{V} \otimes N) .$
		Then the following diagram is commutative.
		$$
		\xy
			\xymatrix{
				\Hom_{\g}(M\otimes V, N) \ar[r]^{\cong} 
				\ar[d]_{\Omega^{*}_{M,V}} & \Hom_{\g}(M, 
				\bar{V} \otimes N) 
				\ar[d]^{-\Omega_{\bar{V},N}-m}\\
				\Hom_{\g}(M\otimes V, N) \ar[r]^{\cong} & 
				\Hom_{\g}(M, \bar{V} \otimes N) }
		\endxy$$
	\end{Lem}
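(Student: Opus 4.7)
The plan is to verify the commutativity by a direct computation using an explicit description of the tensor–hom adjunction and of $\Omega$. Fix the dual basis $\{v_k^*\}\subset \bar V$ of $\{v_k\}\subset V$, so that the natural isomorphism $\Hom_{\g}(M\otimes V, N)\cong \Hom_{\g}(M,\bar V\otimes N)$ sends $f$ to the map $\tilde f(m):=\sum_k v_k^*\otimes f(m\otimes v_k)$. Recall that the $\g$-action on $\bar V$ is determined by $e_{ij}\cdot v_k^*=-\delta_{ki}v_j^*$ (since it is the transpose-dual of $e_{ij}v_l=\delta_{jl}v_i$).

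Next I would unwind the action of $\Omega=\sum_{i,j}e_{ij}\otimes e_{ji}$ on both sides. A short calculation gives
$$\Omega(m\otimes v_k)=\sum_j e_{kj}m\otimes v_j,\qquad \Omega(v_k^*\otimes n)=-\sum_j v_j^*\otimes e_{jk}n.$$
Applying the first formula, the image $\widetilde{f\circ\Omega}(m)$ equals $\sum_{j,k} v_k^*\otimes f(e_{kj}m\otimes v_j)$. Applying the second formula and the $\g$-linearity of $f$, together with the Leibniz rule $e_{jk}(m\otimes v_k)=e_{jk}m\otimes v_k + m\otimes v_j$ (using $e_{jk}v_k=v_j$), one obtains
$$\Omega(\tilde f(m))=-\sum_{j,k} v_j^*\otimes f(e_{jk}m\otimes v_k)\;-\;\sum_{j,k} v_j^*\otimes f(m\otimes v_j).$$
Renaming the summation indices shows the first term equals $-\widetilde{f\circ\Omega}(m)$, while the second term, since $\sum_{k}1=\dim V=m$, equals $-m\,\tilde f(m)$. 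This yields $\Omega_{\bar V,N}\tilde f=-\widetilde{f\circ\Omega}-m\tilde f$, i.e. $\widetilde{f\circ\Omega}=-\Omega_{\bar V,N}\tilde f - m\tilde f$, which is exactly the commutativity of the diagram.

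The computation is essentially bookkeeping, so there is no deep obstacle; the only thing to be careful about is the sign convention on the dual representation and the appearance of the scalar $m$, which comes solely from the ``diagonal'' contribution in $\sum_{j,k}v_j^*\otimes f(m\otimes e_{jk}v_k)$ arising from the Leibniz rule applied to $e_{jk}$ on $m\otimes v_k$. The fact that the constant is precisely $m=\dim V$ (equivalently, the scalar by which the quadratic Casimir $C=\sum e_{ij}e_{ji}$ acts on $V$) reflects the identity $\Omega=\tfrac{1}{2}(\delta(C)-C\otimes 1-1\otimes C)$ restricted to $V$, and offers a conceptual cross-check should one prefer to avoid the index computation.
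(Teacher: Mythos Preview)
Your proof is correct. The computation is carried out cleanly, and the key step---using the $\g$-equivariance of $f$ to expand $e_{jk}f(m\otimes v_k)$ via the Leibniz rule---is exactly what produces both the sign flip and the extra $m$.

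The paper takes a different, more invariant route: instead of working with the matrix units $e_{ij}$ and the explicit basis of $V$, it uses the identity $\Omega=\tfrac12(\delta(C)-C\otimes 1-1\otimes C)$ to rewrite $2\Omega^{*}_{M,V}(f)$ and $2\Omega_{\bar V,N}(g)$ in terms of the actions of the Casimir element $C$ on $M$, $N$, $V$, and $\bar V$. Since the adjunction isomorphism is natural in $M$ and $N$, the $C_M$- and $C_N$-terms match up automatically, and the discrepancy comes entirely from $C_V=C_{\bar V}=m$. What this buys is a basis-free argument that makes transparent why the constant is the Casimir eigenvalue on $V$ (and would generalize immediately to other $V$); what your approach buys is a self-contained computation that does not invoke naturality of the adjunction and makes every sign visible. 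You already noted the Casimir identity as a cross-check, so you have both perspectives in hand.
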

	\begin{proof}
	Recall $\Omega = 
	\frac{1}{2}(\delta(C)-C \otimes 1 -1 \otimes C) 
	\in U(\g) \otimes U(\g)$.
	For $f \in \Hom_{\g}(M\otimes V, N)$ and 
	$g \in \Hom_{\g}(M, \bar{V} \otimes N)$, we have
	\begin{align*}
		2\Omega^{*}_{M,V}(f) 
		&=2 f \circ \Omega \\
		&= f \circ 
		(\delta(C) - C_{M}\otimes 1 - 1 \otimes C_{V}) \\
		&= C_{N} \circ f - f \circ (C_{M}\otimes 1) - m f , \\
		2\Omega_{\bar{V},N}(g) &= 2\Omega \circ g \\
		&=   (\delta(C) - C_{\bar{V}}\otimes 1 - 1 \otimes C_{N})
		 \circ g \\
		&= g \circ C_{M} - m g - (1 \otimes C_{N}) \circ g,
	\end{align*}
	where $C_M$ denotes the action of the Casimir element 
	$C$ on $M$
	and we use $C_{V} = m$ and $C_{\bar{V}} = m$.
	From this, we obtain the assertion.
	\end{proof}

	\begin{Lem} \label{Lem:antiAS}
		Let $M ,N $ be $\g$-modules.
		\begin{enumerate}
		\item \label{Lem:antiAS:dual}
		The formula
		$ s_{i} = \Omega^{(i-1,i)}\,\, (1\leq i < n ), \,
		x_i =  -\sum_{i \leq j \leq n}\Omega^{(i-1,j)} -m \,\,
		(1 \leq i \leq n)
		$
		defines a left action of $H_n$ on the tensor product 
		$\g$-module $\bar{V}^{\otimes n} \otimes N$.
		\item \label{Lem:antiAS:compati}	
		The natural isomorphism
		$\Hom_{\g}(M \otimes V^{\otimes n}, N) 
		\cong \Hom_{\g}(M , \bar{V}^{\otimes n}\otimes N)$
		commutes with $H_n$-actions, where 
		we regard $\Hom_{\g}(M \otimes V^{\otimes n}, N) $
		as an $H_n$-module by Proposition \ref{Prop:AS} and
		$\Hom_{\g}(M , \bar{V}^{\otimes n}\otimes N)$ 
		as an $H_n$-module by (1).
		\end{enumerate}
	\end{Lem}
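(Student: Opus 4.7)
The plan is to prove part (2) first by direct computation, then derive part (1) from part (2) via a Yoneda-type argument. Throughout, let $\tilde f \in \Hom_{\g}(M, \bar V^{\otimes n} \otimes N)$ denote the image of $f \in \Hom_{\g}(M \otimes V^{\otimes n}, N)$ under the natural adjunction isomorphism.

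For part (2), I would verify, generator by generator, that the isomorphism intertwines the two actions. The case $s_i = \Omega^{(i,i+1)}$ is easy: both factors $V_i, V_{i+1}$ get transferred by the adjunction to the $\bar V$-factors at positions $i-1, i$, and since $\Omega$ acts as the swap on both $V \otimes V$ and $\bar V \otimes \bar V$, a short computation yields that $f \mapsto f \circ \Omega^{(i,i+1)}$ corresponds to $\tilde f \mapsto \Omega^{(i-1,i)} \circ \tilde f$. For $x_i = \sum_{0 \leq j < i} \Omega^{(i,j)}$, split the sum into terms with $1 \leq j < i$ and the term $\Omega^{(i,0)}$. The terms with $1 \leq j < i$, where both Casimir factors lie among the $V$'s, correspond to $\Omega^{(j-1, i-1)}$ on the right by the same swap argument; the term $\Omega^{(i,0)}$ (between $M$ and $V_i$) satisfies
\[
f \circ \Omega^{(i,0)} \;\text{corresponds to}\; \Bigl(-\sum_{\substack{0 \leq k \leq n \\ k \neq i-1}} \Omega^{(i-1,k)} - m\Bigr) \circ \tilde f.
\]
This is a generalization of Lemma \ref{Lem:commute} allowing for ``passive'' $V$-factors between $M$ and $V_i$, and is proved by the same direct computation, using $\Omega = \tfrac{1}{2}(\delta(C) - C \otimes 1 - 1 \otimes C)$, the $\g$-equivariance of $f$, and the fact that $C$ acts on $V$ and on $\bar V$ as the scalar $m$. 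Summing all contributions for $x_i$, the cross-terms $\Omega^{(i-1, k)}$ for $0 \leq k \leq i-2$ cancel telescopically, leaving exactly $-\sum_{i \leq j \leq n} \Omega^{(i-1,j)} - m$, which is the formula in (1).

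To deduce part (1) from part (2), note that each $\Omega^{(a,b)}$ is a $\g$-equivariant endomorphism of $\bar V^{\otimes n} \otimes N$, so the formulas in (1) define honest $\C$-linear endomorphisms; only the $H_n$-relations need verification. By part (2), the induced post-composition operators on $\Hom_{\g}(M, \bar V^{\otimes n} \otimes N)$ satisfy those relations for every $\g$-module $M$, since the adjunction intertwines them with the genuine $H_n$-action of Proposition \ref{Prop:AS}. Specializing $M := \bar V^{\otimes n} \otimes N$ and evaluating any such relation at $\mathrm{id}_{\bar V^{\otimes n} \otimes N}$, we conclude that the operators themselves satisfy the $H_n$-relations in $\End_{\g}(\bar V^{\otimes n} \otimes N)$.

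The main obstacle is the generalised form of Lemma \ref{Lem:commute} displayed above: although it is a direct computation, one must track how the Casimir between $M$ (at position $0$) and the interior factor $V_i$ transforms through the iterated adjunction while the other $V$-factors remain at their positions, and apply $\g$-equivariance of $f$ to ``export'' the $\g$-action on the inert factors into Casimir operators between $\bar V_{i-1}$ and its new neighbours on the right. Once this key identity is in place, the rest of the argument reduces to algebraic bookkeeping.
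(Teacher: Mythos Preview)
Your argument is correct. For part (2) the content is essentially the same as the paper's: the paper's terse ``(2) follows from Lemma~\ref{Lem:commute}'' amounts precisely to iterating the adjunction and applying the identity you call the generalized Lemma~\ref{Lem:commute}, together with the observation that precomposition by $X^{(j)}$ on a $V$-factor corresponds to postcomposition by $-X^{(j-1)}$ on the matching $\bar V$-factor (whence the inner Casimirs $\Omega^{(i,j)}$ pass to $\Omega^{(i-1,j-1)}$ with no sign). Your telescoping computation for $x_i$ is exactly what that iteration produces.

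The genuine difference is in the order for part (1). The paper proves (1) independently, by directly verifying the $H_n$-relations for the given operators on $\bar V^{\otimes n}\otimes N$ in the same way as Proposition~\ref{Prop:AS}, and then proves (2). You instead deduce (1) from (2) by specializing $M=\bar V^{\otimes n}\otimes N$ and evaluating at the identity. Your route is a little slicker---it avoids redoing the relation-check---but it makes (1) logically dependent on the full computation for (2), and in particular on the generalized form of Lemma~\ref{Lem:commute}. The paper's route keeps (1) and (2) independent and confines Lemma~\ref{Lem:commute} to the proof of compatibility. Either way the work is the same computation; only the packaging differs.
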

	\begin{proof}
		(\ref{Lem:antiAS:dual}) is similar to 
		Proposition \ref{Prop:AS}.
		(\ref{Lem:antiAS:compati}) 
		follows from Lemma \ref{Lem:commute}.
	\end{proof}

	We fix a dominant weight $\lambda \in \Lambda^{+}$
	satisfying $\lambda_{m} >0$ and denote its $\SG_{m}$-orbit
	by
	$\Pi_{\lambda} $ as before.
	We define an element $\beta \in Q^{+}$ by
	$
	 \beta := \sum_{i=1}^{m} \alpha(-i+1, \lambda_{i})
	$
	and we set $n := \mathtt{ht}(\beta). $
	For each $\mu \in \Pi_{\lambda}$,
	we associate a Kostant partition
	$\pi_{\mu} \in \KP(\beta)$
	by:
	$$
	\pi_{\mu} 
	:= \{\alpha(0, \mu_{1}), \alpha(-1, \mu_{2}), \ldots ,
	\alpha(-m+1, \mu_{m})\}.
	$$
	 \begin{Prop} \label{Prop:ASstd}
	 In the above notation,
	 for each $\mu \in \Pi_{\lambda}$, we have:
	 \begin{alignat*}{2}
	 	\Phi(\Mtil(\mu)) &\cong \std_{H}(\pi_{\mu}), \qquad &
	 	\Phi(M(\mu)) &\cong \pstd_{H}(\pi_{\mu}).
	 \end{alignat*}
	  \end{Prop}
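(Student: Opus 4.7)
The plan is to compute $\Phi(\Mtil(\mu))$ via the adjunction of Lemma \ref{Lem:antiAS} and a deformed Verma filtration, and then to deduce the first statement from the second by Nakayama's lemma. First, Lemma \ref{Lem:antiAS}~(\ref{Lem:antiAS:compati}) gives an $H_{n}$-module identification
\[
\Phi(\Mtil(\mu)) \cong \Hom_{\Otil}\bigl(\Mtil(\rho),\, \bar V^{\otimes n} \otimes \Mtil(\mu)\bigr),
\]
with the right-hand action as in Lemma \ref{Lem:antiAS}~(\ref{Lem:antiAS:dual}). Since $\rho$ is dominant, $\Mtil(\rho)$ is projective in $\Otil$ by Theorem \ref{Thm:Otil}~(\ref{Thm:Otil:proj}), hence so is $\Mtil(\rho)\otimes V^{\otimes n}$, and $\Phi$ is therefore exact and commutes with the specialization $-\otimes_R R/\m$.

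Next I filter $\bar V^{\otimes n}\otimes\Mtil(\mu)$ by deformed Verma modules. The $\be$-module $\bar V$ carries a filtration whose one-dimensional subquotients have weights $-\varepsilon_{m},\dots,-\varepsilon_{1}$; via $\Mtil(\mu)\otimes\bar V\cong U(\g)\otimes_{U(\be)}(R_{\mu-\rho}\otimes\bar V)$ this lifts to a $\g$-submodule filtration of $\Mtil(\mu)\otimes\bar V$ with subquotients $\Mtil(\mu-\varepsilon_{i})$, and iterating across the $n$ tensor factors gives a filtration of $\bar V^{\otimes n}\otimes\Mtil(\mu)$ whose subquotients are $\Mtil(\mu-\varepsilon_{i_{1}}-\cdots-\varepsilon_{i_{n}})$ indexed by $(i_{1},\dots,i_{n})\in\{1,\dots,m\}^{n}$. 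Because $\Mtil(\nu)^{\n_{+}}$ is the free rank-one $R$-module on the highest weight vector $m_{\nu}$ of generalized weight $\nu-\rho$, one has $\Hom_{\Otil}(\Mtil(\rho),\Mtil(\nu))=R$ if $\nu=\rho$ and $0$ otherwise. Applying the exact functor $\Hom_{\Otil}(\Mtil(\rho),-)$ therefore exhibits $\Phi(\Mtil(\mu))$ as a free $R$-module with rank-one $R$-subquotients indexed by sequences in which each $k$ appears exactly $n_{k}=\mu_{k}+k-1$ times; its total rank $\binom{n}{n_{1},\dots,n_{m}}$ matches the rank of $\std_{H}(\pi_{\mu})$ over $R_{m}$ from Remark \ref{Rem:stdH} (under the evident identification $R_{m}\cong R$).

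The main technical step is to construct an explicit generator $w_{\mu}\in\Phi(\Mtil(\mu))$ mirroring the canonical generator of $\std_{H}(\pi_{\mu})$. By Frobenius reciprocity for the parabolic inclusion $H_{n_{1}}\boxtimes\cdots\boxtimes H_{n_{m}}\hookrightarrow H_{n}$, this amounts to exhibiting an $\n_{+}$-invariant weight-zero vector $\tilde w_{\mu}\in\bar V^{\otimes n}\otimes\Mtil(\mu)$ that is $\SG_{n_{k}}$-invariant on each block and is an $x_{l}$-eigenvector (with respect to the action of Lemma \ref{Lem:antiAS}~(\ref{Lem:antiAS:dual})) of eigenvalue $-k+1+(l-a_{k})+z_{k}$ for $l$ in the $k$-th block. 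A natural starting point is a symmetrization of the pure tensor $(v_{m}^{*})^{\otimes n_{m}}\otimes\cdots\otimes(v_{1}^{*})^{\otimes n_{1}}\otimes m_{\mu}$, corrected by lower-weight contributions involving $\n_{-}$-strings $e_{j,i}m_{\mu}$ ($j>i$) that are forced by the Casimir expression for $x_{l}$. Verifying the required $\n_{+}$-invariance, $\SG_{n_{k}}$-invariance, and eigenvalue identities reduces to a direct but technical computation with the operators $\Omega^{(a,b)}$, paralleling the non-deformed calculation of Arakawa-Suzuki \cite{AS}, \cite{Suzuki} but with the extra parameters $z_{k}$ now arising from the $R$-actions on $\Mtil(\mu)$ and on $m_{\rho}$; this verification is the main obstacle of the proof.

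Granting $\tilde w_{\mu}$, Frobenius reciprocity yields a nonzero $H_{n}$-map $\phi:\std_{H}(\pi_{\mu})\to\Phi(\Mtil(\mu))$ between free $R$-modules of the same rank. Specializing at $\m$, the classical Arakawa-Suzuki description of $\Phi(M(\mu))$ (\cite{AS}, \cite{Suzuki}), translated to the affine highest weight framework of Theorem \ref{Thm:dAHA}, identifies $\Phi(M(\mu))$ with $\pstd_{H}(\pi_{\mu})$; thus $\bar\phi:\pstd_{H}(\pi_{\mu})\to\Phi(M(\mu))$ becomes a nonzero endomorphism of $\pstd_{H}(\pi_{\mu})$. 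Since $\End_{\hat H_{\beta}}(\pstd_{H}(\pi_{\mu}))=B_{\pi_{\mu}}/\m B_{\pi_{\mu}}=\C$, the map $\bar\phi$ is a nonzero scalar and hence an isomorphism. Nakayama's lemma then lifts this to the isomorphism $\phi:\std_{H}(\pi_{\mu})\cong\Phi(\Mtil(\mu))$, giving the first assertion; the second is the specialization already used.
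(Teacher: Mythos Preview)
Your overall strategy is sound, but the step you yourself flag as ``the main obstacle'' is genuinely left undone: you never construct $\tilde w_{\mu}$, and the lower-weight corrections by $\n_{-}$-strings you anticipate would indeed make that construction and its verification laborious. The paper bypasses this difficulty completely with one further simplification that you are missing.

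After the adjunction $\Phi(\Mtil(\mu))\cong\Hom_{\Otil}(\Mtil(\rho),\bar V^{\otimes n}\otimes\Mtil(\mu))$, the paper observes that, since the weight $0$ is maximal in the block $\Otil_{\rho}$, this Hom-space is the weight-$0$ part of the $\n_{-}$-\emph{coinvariants} $\bigl(\bar V^{\otimes n}\otimes\Mtil(\mu)\bigr)/\n_{-}$. The tensor identity $\bar V^{\otimes n}\otimes\Mtil(\mu)\cong U(\g)\otimes_{U(\be)}(\bar V^{\otimes n}\otimes R_{\mu-\rho})$ then collapses this to the concrete finite free $R$-module $(\bar V^{\otimes n}\otimes R_{\mu-\rho})_{(0)}$. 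In this model the generator is the bare pure tensor
\[
u=\varepsilon_{1}^{\otimes n_{1}}\otimes\cdots\otimes\varepsilon_{m}^{\otimes n_{m}}\otimes 1
\]
with \emph{no} correction terms; the eigenvalue identity $x_{a_{k}}u=(-k+1+z_{k})u$ is then a short direct calculation modulo $\n_{-}$, and a rank comparison over $R$ gives the isomorphism $\std_{H}(\pi_{\mu})\cong\Phi(\Mtil(\mu))$ immediately. Your Verma filtration and Nakayama bootstrap become unnecessary, and the proper-standard statement is obtained by specialization from the deformed one rather than the other way round.
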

	  \begin{proof}
	  For each module $M \in \Otil$, we compute as:
	  \begin{align*}
		\Hom_{\Otil}\left(\Mtil(\rho)\otimes 
		V^{\otimes n} , M\right)
		& \cong  \Hom_{\Otil}\left(\Mtil(\rho),  
		\bar{V}^{\otimes n} \otimes M\right) \\
		& \cong  \Hom_{\Otil}\left(\Mtil(\rho),  
		\mathop{\mathrm{pr}}\nolimits_{\rho}
		(\bar{V}^{\otimes n} \otimes M)\right) \\
		& \cong   \left( \mathop{\mathrm{pr}}\nolimits_{\rho}
		(\bar{V}^{\otimes n} \otimes M) \right)_{(0)}^{\n_{+}},
	\end{align*}
	where $\mathop{\mathrm{pr}}\nolimits_{\rho} : \Otil \to 
	\Otil_{\rho}$
	is the projection with respect to the block decomposition
	(Proposition \ref{Prop:Otildecomp}).
	Since the weight $0$ is the largest weight in the block
	$\Otil_{\rho}$, there are natural isomorphisms:
	$$
	\left( \mathop{\mathrm{pr}}\nolimits_{\rho}
		(\bar{V}^{\otimes n} \otimes M) \right)_{(0)}^{\n_{+}}
		\cong
		\left( \mathop{\mathrm{pr}}\nolimits_{\rho}
		(\bar{V}^{\otimes n} \otimes M) \right)_{(0)}
		\cong
		\left(
		(\bar{V}^{\otimes n} \otimes M)/\n_{-} \right)_{(0)}.
	$$
	Set $M=\Mtil(\mu)$, then we have
	\begin{align*}
	\Phi(\Mtil(\mu)) &\cong
	\left(
		(\bar{V}^{\otimes n} \otimes \Mtil(\mu))/\n_{-} 
		\right)_{(0)} \\
	&\cong
	\left(
		(U(\g)\otimes_{U(\be)}
		(\bar{V}^{\otimes n} \otimes R_{\mu - \rho}))/\n_{-} 
		\right)_{(0)} \\
	& \cong
		(\bar{V}^{\otimes n} \otimes R_{\mu - \rho})_{(0)} 
	\end{align*}
	where we use the tensor identity for
	the second equality.
	Let $u$ be the vector belonging to $(\bar{V}^{\otimes n} 
	\otimes R_{\mu - \rho})_{(0)} $
	defined by
	$u := \e_{1}^{\otimes n_{1}} \otimes \e_{2}^{\otimes n_{2}}
	\otimes \cdots \otimes \e_{m}^{\otimes n_{m}} \otimes 1,$
	where
	we put
	$n_{i} := \mu_{i} + i -1 = \mu_{i} - \rho_{i}$ and
	$\{\e_{i}\}$ is the basis of $\bar{V}$
	dual to the standard basis $\{v_{i}\} $ of $V$.
	Note that we have
	$e_{ij}\e_{k}=-\delta_{ik}\e_{j}$ for $1 \leq i,j,k \leq m.$

	For each $1 \leq k \leq m,$
	we define the number $a_{k}$ by
	$a_{k} := \sum_{l=1}^{k-1} n_{l} +1$
	as in Remark \ref{Rem:stdH}.
	We compute the action of $x_{a_{k}} \in H_{n}$
	on the vector $u$ as:
	$$
	\begin{array}{cl} 
	&\displaystyle(-x_{a_{k}}-m) u  \\
	\stackrel{\ref{Lem:antiAS}(\ref{Lem:antiAS:compati})}{=} 
	& \displaystyle \sum_{1 \leq i,j \leq m}
	\e_{1}^{\otimes n_{1}}\otimes \cdots
	\otimes \e_{k-1}^{\otimes n_{k-1}}
	\otimes e_{ij} \e_{k} \otimes e_{ji}
	(\e_{k}^{\otimes (n_{k}-1)}\otimes \cdots \otimes 
	\e_{m}^{\otimes n_{m}} \otimes 1)\\
	=& - 
	\displaystyle \sum_{j=1}^{m}
	\e_{1}^{\otimes n_{1}}\otimes \cdots
	\otimes \e_{k-1}^{\otimes n_{k-1}}
	\otimes \e_{j} \otimes e_{jk}
	(\e_{k}^{\otimes (n_{k}-1)}\otimes \cdots \otimes 
	\e_{m}^{\otimes n_{m}} \otimes 1)\\
	\stackrel{\bmod{\n_{-}}}{\equiv}&
	\displaystyle \sum_{j=k+1}^{m}
	e_{jk}(
	\e_{1}^{\otimes n_{1}}\otimes \cdots
	\otimes \e_{k-1}^{\otimes n_{k-1}}
	\otimes \e_{j} )\otimes
	\e_{k}^{\otimes (n_{k}-1)}\otimes \cdots \otimes 
	\e_{m}^{\otimes n_{m}} \otimes 1\\
	& -
	\displaystyle \sum_{j=1}^{k}
	\e_{1}^{\otimes n_{1}}\otimes \cdots
	\otimes \e_{k-1}^{\otimes n_{k-1}}
	\otimes \e_{j} \otimes e_{jk}
	(\e_{k}^{\otimes (n_{k}-1)}\otimes \cdots \otimes 
	\e_{m}^{\otimes n_{m}} \otimes 1) 
	\\
	=&
	\displaystyle \sum_{j=k+1}^{m}
	\e_{1}^{\otimes n_{1}}\otimes \cdots
	\otimes \e_{k-1}^{\otimes n_{k-1}}
	\otimes e_{jk}\e_{j} \otimes
	\e_{k}^{\otimes (n_{k}-1)}\otimes \cdots \otimes 
	\e_{m}^{\otimes n_{m}} \otimes 1\\
	&-
	\e_{1}^{\otimes n_{1}}\otimes \cdots
	\otimes \e_{k-1}^{\otimes n_{k-1}}
	\otimes \e_{k} \otimes e_{kk}
	(\e_{k}^{\otimes (n_{k}-1)}\otimes \cdots \otimes 
	\e_{m}^{\otimes n_{m}} \otimes 1) \\
	=&
	(k-m)u
	+(n_{k}-1 - n_{k}-z_{k})u \\
	=& 
	(k-m-1-z_{k})u.
	\end{array}
	$$
	Thus we have $x_{a_{k}} u = (-k+1 + z_{k})u.$
	Since the parabolic subgroup $\SG_{n_{1}} \times 
	\cdots \times \SG_{n_{m}}$
	fixes the vector $u$,
	the element
	$x_{l}$ acts on by the multiplication of the scalar
	$-k+1 + (l-a_{k}) + z_{k}$
	if $a_{k} \leq l < a_{k+1}$.
	This shows that
	there is an $H_{n}$-homomorphism
	$\std_{H}(\pi_{\mu})
	\to
	(\bar{V}^{\otimes n} )_{(-\mu + \rho)}
	\otimes R_{\mu - \rho}
	$
	which sends the generator $1$ to the vector $u$
	(see Remark \ref{Rem:stdH}).
	By comparing the rank of $R$-modules,
	we see that this is actually an isomorphism.
	Thus we have $\Phi(\Mtil(\mu)) \cong \std_{H}(\pi_{\mu})$.
	The assertion $\Phi(M(\mu)) \cong 
	\pstd_{H}(\pi_{\mu})$ is obtained from this
	by taking $\m$-coinvariants.
	We remark that this latter assertion
	is a special case of
	the results of Arakawa-Suzuki \cite{AS}, \cite{Suzuki}.
	  \end{proof}

	  \begin{Lem} \label{Lem:ASdual}
	  For each $M \in \OO$, we have
	  $\Phi(M^{\vee}) \cong \Phi(M)^{\circledast}.$
	  In particular, we have
	    $\Phi(M(\mu)^{\vee}) \cong \pcstd_{H}(\pi_{\mu})$
	    for each $\mu \in \Pi_{\lambda}.$
	  \end{Lem}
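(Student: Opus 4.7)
The strategy is to establish $\Phi(M^{\vee}) \cong \Phi(M)^{\circledast}$ as a natural isomorphism of contravariant exact functors on $\OO$. First note that since $M^{\vee} \in \OO$ whenever $M \in \OO$, and both have trivial $R$-action, $\Phi(M)$ and $\Phi(M^{\vee})$ reduce to $\Hom_{\g}(M(\rho) \otimes V^{\otimes n}, -)$ applied to $M$ and $M^{\vee}$, each finite-dimensional because $P := M(\rho) \otimes V^{\otimes n}$ is projective in $\OO$ (as $\rho$ is dominant) and of finite length. Consequently $\Phi((-)^{\vee})$ and $\Phi(-)^{\circledast}$ are both contravariant exact functors $\OO \to H_{n}\text{-mod}$.

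The plan is first to construct a natural transformation $\eta_{M}: \Phi(M^{\vee}) \to \Phi(M)^{\circledast}$, and then to verify it is an $H_{n}$-linear isomorphism. The transformation $\eta_{M}$ is given by a bilinear pairing $\langle \phi', \phi \rangle_{M}$ on $\Phi(M^{\vee}) \otimes \Phi(M)$ built from the BGG evaluation $M^{\vee} \otimes M \to \C$ together with the canonical $\g$-invariant structure of $P$ coming from its Verma flag and the canonical duality $V \otimes \bar{V} \to \C$ on the finite-dimensional factor. That $\eta_{M}$ is a bijection is verified on Verma modules: the reduction to the Verma case proceeds by first reducing to projectives via finite projective resolutions in $\OO$ and then to Vermas via the standard filtrations of projectives, using the contravariant exactness of both functors together with the five-lemma. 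For Vermas, Proposition \ref{Prop:ASstd} gives $\Phi(M(\mu)) \cong \pstd_{H}(\pi_{\mu})$, while a computation parallel to the one in its proof identifies $\Phi(M(\mu)^{\vee}) \cong \pcstd_{H}(\pi_{\mu})$, which coincides with $\pstd_{H}(\pi_{\mu})^{\circledast}$ by Remark \ref{Rem:stdH}.

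The main obstacle is the $H_{n}$-equivariance of $\eta_{M}$ through the anti-involution $\psi$. Since $\psi$ fixes the generators $s_{i}$ and $x_{j}$, and these are constructed from the Casimir operators $\Omega^{(a,b)}$ via Proposition \ref{Prop:AS}, the equivariance reduces to showing that each $\Omega^{(a,b)}$ is self-adjoint with respect to the pairing. This follows from the fact that the Chevalley anti-involution $\sigma$ of $U(\g)$ (which defines the BGG dual $(-)^{\vee}$) fixes the Casimir element $C \in U(\g)$ since $C$ is central, and hence fixes each $\Omega^{(a,b)} = \tfrac{1}{2}(\delta(C) - C \otimes 1 - 1 \otimes C)$ extended to the appropriate tensor factors; the BGG pairing therefore intertwines the action of $\Omega^{(a,b)}$ with itself up to the $\sigma$-twist, yielding the desired $\psi$-equivariance. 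The ``in particular'' statement is then the specialisation of the first assertion to $M = M(\mu)$, combined with Proposition \ref{Prop:ASstd} and Remark \ref{Rem:stdH}.
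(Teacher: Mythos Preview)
Your route is considerably more elaborate than the paper's, and the construction of the pairing is not fully specified. The paper proves the lemma by a direct chain of isomorphisms, using only ingredients already in hand: since $M\in\OO$ one may replace $\Mtil(\rho)$ by $M(\rho)$; the adjunction of Lemma~\ref{Lem:antiAS} gives $\Phi(M^{\vee})\cong\Hom_{\OO}(M(\rho),\bar V^{\otimes n}\otimes M^{\vee})$; self-duality of finite-dimensional modules under $(-)^{\vee}$ yields $\bar V^{\otimes n}\otimes M^{\vee}\cong(\bar V^{\otimes n}\otimes M)^{\vee}$; and the universal property of $M(\rho)$ together with the fact that the BGG dual exchanges $\n_{+}$-invariants and $\n_{-}$-coinvariants gives $\Hom_{\OO}(M(\rho),N^{\vee})\cong\bigl((N/\n_{-})_{(0)}\bigr)^{*}$. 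Combined with the description $\Phi(M)\cong\bigl((\bar V^{\otimes n}\otimes M)/\n_{-}\bigr)_{(0)}$ established in the proof of Proposition~\ref{Prop:ASstd}, this identifies the result with $\Phi(M)^{\circledast}$. No pairing construction and no five-lemma reduction are needed.

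In your outline, the phrase ``the canonical $\g$-invariant structure of $P$ coming from its Verma flag and the canonical duality $V\otimes\bar V\to\C$'' does not pin down a well-defined pairing: a Verma flag is a filtration rather than a direct sum, and the obvious contravariant form on $P=M(\rho)\otimes V^{\otimes n}$ (Shapovalov on the first factor tensored with the standard form on $V^{\otimes n}$) is degenerate. Even granting some natural $\eta$, your check on Vermas only shows that $\Phi(M(\mu)^{\vee})$ and $\Phi(M(\mu))^{\circledast}$ are \emph{abstractly} isomorphic (via a parallel computation and Remark~\ref{Rem:stdH}); it does not verify that the specific map $\eta_{M(\mu)}$ realizes this isomorphism, so the five-lemma bootstrap does not get off the ground without further argument. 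Your observation that the $\Omega^{(a,b)}$ are fixed by the Chevalley transpose, and hence that the $H_{n}$-action is $\psi$-self-adjoint, is correct and is exactly what makes the paper's identification $\bar V^{\otimes n}\otimes M^{\vee}\cong(\bar V^{\otimes n}\otimes M)^{\vee}$ an isomorphism of $H_{n}$-modules.
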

	  \begin{proof}
	  We remark that $N^{\vee} \cong N$ holds for
	  any finite dimensional $\g$-module $N$
	  and $(N\otimes M )^{\vee} \cong N \otimes M^{\vee}$
	  for any $M \in \OO$.
	  We compute as:
	  \begin{align*}
	  \Phi(M^{\vee}) & \cong \Hom_{\Otil}(\Mtil(\rho)\otimes
	  V^{\otimes n}, M^{\vee}) \\
	  & \cong \Hom_{\OO}(M(\rho)\otimes V^{\otimes n}, 
	  M^{\vee}) \\
	  & \cong
	  \Hom_{\OO}(M(\rho), \bar{V}^{\otimes n}\otimes 
	  M^{\vee}) \\
	   & \cong
	  \Hom_{\OO}(M(\rho), (\bar{V}^{\otimes n}\otimes 
	  M)^{\vee}) \\
	  & \cong
	 \left( (\bar{V}^{\otimes n}\otimes M)/\n_{-} 
	 \right)_{(0)}^{*}\\
	 & \cong \Phi(M)^{\circledast},
	  \end{align*}
	  which proves the assertion.
	  \end{proof}

	Finally, we prove the main theorem of this section.

	\begin{Thm} \label{Thm:main}
	The Arakawa-Suzuki functor induces 
	a fully faithful functor
	$\Phi : \Otil_{\lambda} \to \hat{H}_{\beta} \mof$
	between complete affine highest weight categories.
	\end{Thm}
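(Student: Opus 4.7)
My plan is to realize $\Phi$ as the composition of an equivalence
$\Phi' : \Otil_\lambda \simeq \hat{H}_\beta\mof_{S}$ with the
fully faithful inclusion
$\hat{H}_\beta\mof_{S} \hookrightarrow \hat{H}_\beta\mof$,
where $S := \{\pi_\mu \mid \mu \in \Pi_\lambda\} \subset \KP(\beta)$
and $\hat{H}_\beta\mof_{S}$ denotes the full subcategory of modules
whose support lies in $S$. Given such $\Phi'$, fully faithfulness of
$\Phi$ is immediate, and I produce $\Phi'$ by invoking the complete
analog of Theorem \ref{Thm:equiv} developed in
Section \ref{Completion}.

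The key preliminary input is combinatorial. I would first check that
$\mu \mapsto \pi_\mu$ is an order-isomorphism
$(\Pi_\lambda, \leq) \to (S, \unlhd)$: injectivity is clear from the
explicit formula, and compatibility with the two partial orders
reduces to a direct analysis of the elementary moves generating
$\unlhd$ on $\KP(\beta)$. Secondly---and this is the real technical
point---I would verify that $S$ is \emph{saturated} (downward closed)
in $(\KP(\beta), \unlhd)$. Here the structural observation is that
every $\pi_\mu$ consists of exactly $m$ intervals with
\emph{distinct} left endpoints $\{0, -1, \ldots, -m+1\}$ and with
right-endpoint multiset $\{\lambda_1, \ldots, \lambda_m\}$.
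The hypothesis $\lambda_m \geq 1$ forces every right endpoint of
$\pi_\mu$ to strictly exceed every left endpoint, so no inverse of
the merging move~(1) can be performed inside $\pi_\mu$. Meanwhile, an
inverse application of move~(2) simply swaps two right endpoints
while preserving the full set of left endpoints and the multiset of
right endpoints, hence sends any $\pi_\mu$ to another
$\pi_{\mu'} \in S$. Since $\unlhd$ is generated by these two moves,
it follows that $S$ is saturated.

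Once saturatedness is established, the complete analog of
\cite{affine} Proposition~5.16 realizes $\hat{H}_\beta\mof_{S}$ as a
complete affine highest weight subcategory of $\hat{H}_\beta\mof$,
with index poset $(S, \unlhd)$, standards $\std_{H}(\pi)$, and proper
costandards $\pcstd_{H}(\pi)$ for $\pi \in S$, still satisfying
$(\hat{\spadesuit})$. Then $\Phi$ does factor through this
subcategory: any $M \in \Otil_\lambda$ is a quotient of some
projective $P \in \Otil_\lambda$ with an $\Mtil$-filtration by
Theorem~\ref{Thm:Otil}, and by Proposition~\ref{Prop:ASstd} the image
$\Phi(P)$ admits a $\std_{H}$-filtration whose indices lie in $S$;
saturatedness then forces all composition factors of $\Phi(P)$, and
hence of $\Phi(M)$, to be indexed by $S$.

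Finally I would apply the complete Theorem~\ref{Thm:equiv} to the
induced functor
$\Phi' : \Otil_\lambda \to \hat{H}_\beta\mof_{S}$: it is exact (since
$\Mtil(\rho) \otimes V^{\otimes n}$ is projective in $\Otil$), the
bijection $\mu \mapsto \pi_\mu$ is order-preserving by the first
step, and $\Phi'$ identifies standards with standards
(Proposition~\ref{Prop:ASstd}) and proper costandards with proper
costandards (Lemma~\ref{Lem:ASdual}). The theorem yields the
equivalence $\Phi' : \Otil_\lambda \simeq \hat{H}_\beta\mof_{S}$, and
composing with the full embedding
$\hat{H}_\beta\mof_{S} \hookrightarrow \hat{H}_\beta\mof$ produces
the claimed fully faithful functor. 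The principal obstacle in this
plan is the combinatorial step: verifying order-compatibility of
$\mu \mapsto \pi_\mu$ and saturatedness of $S$; everything else is
essentially bookkeeping given the machinery of
Sections~\ref{tilting} and \ref{Completion}.
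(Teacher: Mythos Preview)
Your proposal is correct and follows essentially the same route as the paper: the paper defines the saturated subset as $\KP(\beta)_m := \{\pi \in \KP(\beta) \mid |\pi| = m\}$, shows that $\mu \mapsto \pi_\mu$ is a poset isomorphism $\Pi_\lambda \xrightarrow{\sim} \KP(\beta)_m$ (which coincides with your $S$) and that this set equals $\{\pi \unlhd \pi_\lambda\}$, and then applies the complete version of Theorem~\ref{Thm:equiv} to the induced functor into $\hat{H}_\beta\mof_m$. Your sketch of the saturatedness argument is exactly the verification the paper leaves as ``easy to show''; one small wording issue is that for downward-closure you want to rule out the \emph{merging} step itself (going from $\pi_\mu$ to a smaller element via case~(1)), not its inverse, but your actual argument---that no right endpoint of $\pi_\mu$ can equal a left endpoint---is the correct one.
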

	\begin{proof}
	Note that the category $\Otil_{\lambda}$
	is the Serre subcategory 
	of $\Otil$ 
	generated by 
	deformed Verma modules $\Mtil(\mu)$
	for $\mu \in \Pi_{\lambda}$.
	By Proposition \ref{Prop:ASstd}, we see
	for each $M \in \Otil_{\lambda}$ the $H_{n}$-action
	on $\Phi(M)$ is naturally extended to the action of
	$\hat{H}_{\beta}.$
	Therefore we can regard $\Phi$ as the exact functor
	$\Phi : \Otil_{\lambda} \to \hat{H}_{\beta}\mof.$
	
	We have to show that this functor
	$\Phi$ is fully faithful.
	Let us consider the subset 
	$\KP(\beta)_{m} \subset \KP(\beta)$
	which consists of Kostant partitions of 
	$\beta$ consisting of $m$ positive roots
	i.e.\ $\KP(\beta)_{m} := 
	\{\pi \in \KP(\beta) \mid |\pi| = m\}.$
	It is easy to show
	that the correspondence
	$\Pi_{\lambda} \to \KP(\beta)_{m}; \, \mu \mapsto \
	\pi_{\mu}$
	is an isomorphism of posets and
	$\KP(\beta)_{m} = \{ \pi \in \KP(\beta) \mid \pi \unlhd 
	\pi_{\lambda} \}$,
	which is saturated.
	Then the full subcategory $\hat{H}_{\beta}\mof_{m}
	\subset \hat{H}_{\beta}\mof$ consisting of
	modules $M$ with $\supp(M) \subset \KP(\beta)_{m}$
	is an affine highest weight category
	whose standard 
	(resp.\  proper standard, proper costandard) modules
	are $\std_{H}(\pi_{\mu})$ (resp.\ 
	$\pstd_{H}(\pi_{\mu}), \pcstd_{H}(\pi_{\mu})$).
	Thanks to Proposition \ref{Prop:ASstd} and 
	Lemma \ref{Lem:ASdual},
	we can apply
	the complete version of 
	Theorem \ref{Thm:equiv} to the functor
	$\Phi: \Otil_{\lambda} \to \hat{H}_{\beta}\mof_{m}$
	to complete the proof.
	\end{proof}
	
	\begin{Rem}
	The subcategory
	 $\hat{H}_{\beta}\mof_{m} 
	 \subset \hat{H}_{\beta}\mof$ 
	 is naturally equivalent to the module category
	 over the maximal quotient algebra of $\hat{H}_{\beta}$
	 whose support as a left $\hat{H}_{\beta}$-module
	 is contained in $\KP(\beta)_{m}$
	 (cf.\ \cite{affine} Lemma 3.13).
	 Therefore the above proof of Theorem 
	 \ref{Thm:main} says that the block 
	 $\Otil_{\lambda}$ 
	 is equivalent to the module category 
	 over a quotient algebra of $\hat{H}_{\beta}.$
	 \end{Rem}

%\bibliographystyle{plain}
%\bibliography{ref}

\end{document}